\DeclareMathOperator{\im}{Im}
\DeclareMathOperator{\End}{End}
\DeclareMathOperator{\Hom}{Hom}
\DeclareMathOperator{\tr}{Tr}
\DeclareMathOperator{\Lie}{Lie}
\DeclareMathOperator{\LC}{LC}
\DeclareMathOperator{\Hor}{Hor}
\DeclareMathOperator{\SO}{SO}
\DeclareMathOperator{\SU}{SU}
\DeclareMathOperator{\GL}{GL}
\DeclareMathOperator{\SL}{SL}
\DeclareMathOperator{\PSU}{PSU}
\DeclareMathOperator{\Ric}{Ric}
\DeclareMathOperator{\vol}{vol}
\newcommand{\R}{\mathbb R}
\newcommand{\C}{\mathbb C}
\newcommand{\G}{\mathcal{G}}
\newcommand{\diff}{\text{\rm d}}
\newcommand{\del}{\partial}
\newcommand{\dirac}{\slashed{\del}}
\newcommand{\dvol}{\mathrm{dvol}}
\newcommand{\so}{\mathfrak{so}}
\renewcommand{\im}{\mathrm {im}\,}
\renewcommand{\P}{\mathbb P}
\theoremstyle{plain}
	\newtheorem{theorem}{Theorem}
	\newtheorem{proposition}[theorem]{Proposition}
	\newtheorem{lemma}[theorem]{Lemma}
	\newtheorem{corollary}[theorem]{Corollary}
	\newtheorem{conjecture}[theorem]{Conjecture}
	\newtheorem{question}[theorem]{Question}
\theoremstyle{definition}
	\newtheorem{definition}[theorem]{Definition}
	\newtheorem{remark}[theorem]{Remark}
\theoremstyle{plain}
	\newtheorem*{theorem*}{Theorem}
	\newtheorem*{proposition*}{Proposition}
	\newtheorem*{lemma*}{Lemma}
	\newtheorem*{corollary*}{Corollary}
	\newtheorem*{conjecture*}{Conjecture}
\theoremstyle{definition}
	\newtheorem*{definition*}{Definition}
	\newtheorem*{remark*}{Remark}
	\newtheorem*{remarks*}{Remarks}
\def\blfootnote{\xdef\@thefnmark{}\@footnotetext}
\numberwithin{theorem}{section}
\begin{document}

\title{A gauge theoretic approach to Einstein 4-manifolds}

\begin{abstract}This article investigates a new gauge theoretic approach to Einstein's equations in dimension~4. Whilst aspects of the formalism are already explained in various places in the mathematics and physics literature, our first goal is to give a single coherent account of the theory in purely mathematical language. We then explain why the new approach may have important mathematical applications: the possibility of using the calculus of variations to find Einstein 4-manifolds, as well as links to symplectic topology. We also carry out some of the technical groundwork to attack these problems.
\end{abstract}

\subjclass[2010]{53C25, 53C07, 53D35, 58E30}

\author{Joel Fine} 
\address[Joel Fine]{Département de mathématique\\ 
	Université libre de Bruxelles\\
	Bruxelles 1050\\
	Belgium}
\email{Joel.Fine@ulb.ac.be}
\thanks{JF was supported by an Action de Recherche Concertée and by an Interuniversity Action Poles grant.}

\author{Kirill Krasnov}
\address[Kirill Krasnov]{School of Mathematical Sciences\\
	University Park\\
	Nottingham\\
	NG7 2RD\\
	United Kingdom}
\email{Kirill.Krasnov@nottingham.ac.uk}
\thanks{KK was supported by ERC Starting Grant 277570-DIGT}

\author{Dmitri Panov}
\address[Dmitri Panov]{Department of Mathematics\\
	King's College\\
	Strand\\
	London WC2R 2LS\\
	United Kingdom}
\email{Dmitri.Panov@kcl.ac.uk}
\thanks{DP is a Royal Society Research Fellow}

\date{13th January, 2014. Revised 27th February 2014.}

\maketitle

\section{Introduction}

The focus of this article is a new approach to Einstein 4-manifolds, introduced in the physics literature by the second named author \cite{Krasnov2011Pure-Connection} and independently, albeit in a weaker form focusing on \emph{anti-self-dual} Einstein metrics, by the first named author \cite{Fine2011A-gauge-theoret}. As things stand, the full description of this method is somewhat inaccessible to mathematicians, the necessary background material being spread over several articles written primarily for physicists. Our first goal is to rectify this by giving a single coherent account of the theory in purely mathematical language (\S\ref{Einstein-gauge} below). Our discusion is also more complete in several places than that currently available. 

As we will explain, this formalism potentially has important mathematical applications: it opens up a new way to use the calculus of variations to find Einstein metrics and also has possible applications to symplectic topology. Our second goal is to lay some of the ground work in these directions, ask questions and state some conjectures which we hope will inspire future work (see \S\ref{symplectic_section} and \S\ref{CV_ok}).

\subsection*{Acknowledgements} We would like to thank Claude LeBrun and Misha Verbitsky for helpful discussions, particularly concerning \S\ref{positive_conjetcure_section}. We would also like to thank the anonymous referee of the first draft of this article for suggesting we consider the situation treated in Theorem \ref{no_self_dual_counterexamples}.

\subsection{Main idea}

The key idea is to rewrite Einstein's equations in the language of \emph{gauge theory}, placing them in a similar framework to Yang--Mills theory over 4-manifolds. This reformulation is special to dimension four. We give here the Riemannian version, but one can work equally with Lorentzian signature, replacing $\SO(3)$ throughout by $\SL(2,\C)$, as is explained in \cite{Krasnov2011Pure-Connection}. We give the details in \S\ref{Einstein-gauge}, but put briefly it goes as follows:
\begin{itemize}
\item
Given an $\SO(3)$-connection $A$ over a 4-manifold $X$ which satisfies a certain curvature inequality, we associate a Riemannian metric $g_A$ on $X$, defined algebraically from the curvature of $A$. One can think of $A$ as a ``potential'' for the metric, analogous to the relationship between the electromagnetic potential and field. 
\item
To such connections we also associate an action $S(A) \in \R$, which is simply the volume of the metric $g_A$. Critical points of the action solve a second order PDE which implies that $g_A$ is Einstein. 
\item
There is an a~priori topological bound for $S(A)$ which is attained precisely when $A$ solves a first order PDE (which then implies the second-order equation alluded to above). When this happens the corresponding metric $g_A$ is both anti-self-dual and Einstein, with non-zero scalar curvature.  
\end{itemize}
The parallel with Yang--Mills theory is clear: $S$ plays the rôle of Yang--Mills energy, with Einstein metrics corresponding to Yang--Mills connections and anti-self-dual Einstein metrics being the instantons of the theory. In \S\ref{other_actions}, we briefly discuss other related action principles: the Einstein--Hilbert action, Eddington's action for affine connections and Hitchin's volume functional for stable forms. 

\subsection{Drawbacks}

Before explaining the theory in more detail, it is important to note its current principal failing: the Einstein metrics which arise are precisely those for which $\frac{s}{12}+W^+$ \emph{is a definite endomorphism of $\Lambda^+$}. (Here $s$ is the scalar curvature and $W^+$ is the self-dual Weyl curvature.) We discuss briefly what modifications may be needed to accommodate more general Einstein metrics in \S\ref{extensions}, but there remains much to be done in this direction.

The only possible compact Einstein manifolds for which $\frac{s}{12}+W^+$ is \emph{positive} definite are the standard metrics on $S^4$ and $\C\P^2$ (with the non-complex orientation). This is proved in Theorem~\ref{Einstein_positive_definite}, following an argument which was explained to us by Claude LeBrun. 

When $\frac{s}{12}+W^+$ is negative definite the only \emph{known} compact examples are hyperbolic and complex-hyperbolic metrics (the latter again having the non-complex orientation). Note that, just as for the positive case, here one even has $W^+=0$. It is an interesting open question as to whether these are the only such examples (Question \ref{negative_Einstein_question} below). Some candidate manifolds which have a chance to support such Einstein metrics are described in \S\ref{negative_definite_section}. 

\subsection{Applications}

We believe this reformulation of Einstein's equations will have important  applications. Firstly, it reveals a new link between Einstein 4-manifolds and symplectic Fano and Calabi--Yau 6-manifolds with a certain geometric structure, expanding on that discussed by the first and third named authors \cite{Fine2009Symplectic-Cala}. We explain this in \S\ref{symplectic_section}, where we state Conjecture \ref{positive_definite_conjecture}, which claims that certain 6-dimensional symplectic Fanos are actually algebraic. Proving such a result seems out of reach by current methods in symplectic topology so it is intriguing that the formalism described here suggests a line of attack. Conjecture \ref{positive_definite_conjecture} can also be viewed as a ``gauge theoretic sphere theorem'', saying that only the four-sphere and complex projective plane admit connections whose curvature satisfies a certain inequality. (This might be compared to ``sphere theorems'' in Riemannian geometry, which say that only certain special manifolds admit Riemannian metrics satisfying certain Riemannian curvature inequalities.)

Secondly, the formalism described here gives a new variational approach to Einstein metrics. We lay the groundwork for this in \S\ref{CV_ok}. The traditional action principle for Einstein metrics, via the Einstein--Hilbert action, is not well suited to the calculus of variations, ultimately because its Hessian has an infinite number of eigenvalues of both signs. In our setting however, this problem does not arise: \emph{the Hessian of the volume functional is elliptic with finitely many positive eigenvalues}. This is Theorem~\ref{Hessian_elliptic}. We also briefly discuss the gradient flow of $S$, the analogue of the Yang--Mills flow in this context,  proving short time existence in Theorem \ref{short_time_gf}.

\section{Einstein's equations as a gauge theory}\label{Einstein-gauge}

\subsection{Definite connections as potentials for conformal classes}\label{conformal_class}

We begin with the curvature inequality for an $\SO(3)$-connection mentioned above, and which first appeared in \cite{Fine2009Symplectic-Cala}.

\begin{definition}\label{definite}
A metric connection $A$ in an $\SO(3)$-bundle $E \to X$ over a 4-manifold is called \emph{definite} if whenever $u,v$ are independent tangent vectors, $F_A(u,v) \neq 0$. 
\end{definition}

Plenty of examples of definite connections are given in \cite{Fine2009Symplectic-Cala}. Particularly important to our discussion are those carried by $S^4$, $\C\P^2$, hyperbolic and complex-hyperbolic 4-manifolds. For each of these Riemannian manifolds the Levi-Civita connection on $\Lambda^+$ is definite (where for the complex surfaces we take self-dual forms with respect to the non-complex orientation). 

Given a definite connection $A$ there is a unique conformal class for which $A$ is a self-dual instanton. To see this note that on a 4-manifold a conformal class is determined by the corresponding sub-bundle $\Lambda^+ \subset \Lambda^2$ of self-dual 2-forms. Now, given a local frame $e_1, e_2, e_3$ for $\so(E)$, write $F_A = \sum F_i \otimes e_i$ for a triple of 2-forms $F_i$. We then take the span of the $F_i$ to be $\Lambda^+$. We must check that this sub-bundle satisfies the necessary algebraic condition to be the self-dual 2-forms of some conformal class, namely that the matrix $F_i \wedge F_j$ of volume forms is definite. This, it turns out, is equivalent to $A$ being a definite connection. In what follows we write $\Lambda^+_A$ for the bundle of self-dual 2-forms to emphasise its dependence on $A$. (Notice that this also implicitly orients $X$: given any non-zero $\alpha \in \Lambda^+_A$, the square $\alpha \wedge \alpha$ is positively oriented.)

\subsection{The sign of a definite connection}

Unlike for an arbitrary connection, it is possible to give a sign to the curvature of a definite connection. The starting point is the fact that the Lie algebra $\so(3)$ carries a natural orientation. One way to see this is to begin with two linearly independent vectors $e_1, e_2 \in \so(3)$ and then declare $e_1, e_2, [e_1,e_2]$ to be an oriented basis. One must then check that this orientation does not depend on the initial choice of $e_1, e_2$. Alternatively, and more invariantly, pick an orientation on $\R^3$; the cross product then gives an isomorphism $\R^3 \to \so(3)$ which one can use to push the orientation from $\R^3$ to $\so(3)$. If one begins with the opposite orientation on $\R^3$, the cross-product changes sign and so the resulting orientation on $\so(3)$ is unchanged. 

One consequence of this is that when $X^4$ is given a conformal structure, the resulting bundle $\Lambda^+$ carries a natural orientation. Again, there are various ways to see this. For example, picking a metric in the conformal class, the splitting $\Lambda^2 = \Lambda^+ \oplus \Lambda^-$ corresponds to the Lie algebra isomorphism $\so(4) = \so(3) \oplus \so(3)$, making $\Lambda^+$ into a bundle of $\so(3)$ Lie algebras. Equivalently, given an orthonormal basis $\omega_1, \omega_2, \omega_3$ of $\Lambda^+$ at some point $p$, using the metric to ``raise an index'' on $\sqrt{2}\omega_i$ defines a triple $J_1, J_2, J_3$ of almost complex structures on $T_pX$. There are two possibilities: either the $J_i$ satisfy the quaternion relations, or the $-J_i$ do; the first corresponds to the original choice of basis $\omega_i$ having positive orientation.

\begin{definition}
Let $A$ be a definite connection in an $\SO(3)$-bundle $E \to X$. Let $e_1, e_2, e_3$ be an oriented local frame of $\so(E)$ and write $F_A = \sum F_i \otimes e_i$. $A$ is called \emph{positive definite} if $F_1, F_2, F_3$ is an oriented basis for $\Lambda^+_A$ and \emph{negative definite} otherwise.
\end{definition}

For the standard metrics on $S^4$ and $\overline{\C\P}^2$ the Levi--Civita connections on $\Lambda^+$ are positive definite, whilst hyperbolic and complex-hyperbolic metrics give negative definite connections. 

\subsection{Definite connections as potentials for metrics}

In what follows we fix an orientation on $E$. Together with the fibrewise metric this gives orientation preserving isomorphisms $E^* \cong E \cong \so(E)$ and we will freely identify all these bundles.

We have explained how a definite connection $A$ gives rise to a conformal structure on $X$, the unique one making $A$ into a self-dual instanton.
To specify a metric in this conformal class we need to chose a volume form. 
To see how to do this, pick an arbitrary positively oriented volume form $\nu$. This gives a fibrewise inner-product on $\Lambda^+_A$. Now we can interpret $F_A \in \Lambda^+_A \otimes \so(E) \cong \Hom (E, \Lambda^+_A)$ as an isomorphism $E \to \Lambda^+_A$. Pulling back the inner-product from $\Lambda^+_A$ via this isomorphism gives a new metric in $E$ which differs from the old one by a self-adjoint endomorphism $M_\nu \in \End(E)$. In terms of an orthonormal local trivialisation $e_1, e_2, e_3$ of $E$ in which  $F_A = \sum F_i \otimes e_i$, the matrix representative of $M_\nu$ is determined by 
\[ 
M_{ij} \, \nu = F_i\wedge F_j.
\]
Note that $\nu$ and $M_\nu$ are in inverse proportion, so specifying a volume form is the same as fixing the scale of $M_\nu$. 

A particularly judicious choice of volume form is the following: let $\nu$ be any background choice of (positively oriented) volume form and let $\Lambda$ be any non-zero constant whose sign agrees with that of $A$. Set
\begin{equation}\label{volume_equation}
\nu_A = \frac{1}{\Lambda^2}\left( \tr\sqrt {M_\nu}\right)^2 \nu.
\end{equation}
where $\sqrt{M}$ denotes the positive definite square root of $M$. The homogeneity of (\ref{volume_equation}) means that $\nu_A$ does not depend on the choice of $\nu$. To ease the notation, in what follows we write $M_A$ for $M_{\nu_A}$. An equivalent definition of $\nu_A$ is to demand that if $A$ is positive definite, $\tr\sqrt{M_{A}} =\Lambda$ and if $A$ is negative definite, we have $\tr\sqrt{M_{A}} = - \Lambda$.

\begin{definition}
Given a definite connection $A$, we write $g_A$ for the resulting Riemannian metric with volume form $\nu_A$ defined via (\ref{volume_equation}) and which makes the definite connection $A$ a self-dual instanton.
\end{definition}

The justification for this definition of $\nu_A$ is the following result.

\begin{lemma}
Let $g$ be an Einstein metric, $\Ric = \Lambda g$ and suppose that
$\frac{\Lambda}{3} + W^+$ is a definite endomorphism of $\Lambda^+$. Then the Levi-Civita $\nabla$ connection on $\Lambda^+$ is definite and $g_\nabla = g$.
\end{lemma}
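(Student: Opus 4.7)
The plan is to interpret the Levi-Civita connection $\nabla$ on $\Lambda^+$ as an $\SO(3)$-connection and use the classical block decomposition of the Riemann tensor. Under $\Lambda^2=\Lambda^+\oplus\Lambda^-$ the curvature operator has the form
$$\mathcal{R}=\begin{pmatrix}\tfrac{s}{12}+W^+ & \mathring{\Ric}\\ \mathring{\Ric}^* & \tfrac{s}{12}+W^-\end{pmatrix},$$
and the curvature $F_\nabla$, viewed as a map $\Lambda^2\to\so(\Lambda^+)\cong\Lambda^+$, is the top row. Since $g$ is Einstein, $\mathring{\Ric}=0$ and $s=4\Lambda$, so $F_\nabla$ annihilates $\Lambda^-$ and acts on $\Lambda^+$ as the self-adjoint endomorphism $T:=\tfrac{\Lambda}{3}+W^+$. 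In particular $\nabla$ is already a self-dual instanton for the conformal class of $g$.

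Next, to verify that $\nabla$ is definite in the sense of Definition~\ref{definite}, I would fix a $g$-orthonormal local frame $\omega_1,\omega_2,\omega_3$ of $\Lambda^+$ and use the induced frame $e_i$ of $\so(\Lambda^+)$. Writing $F_\nabla=\sum F_i\otimes e_i$ we get $F_i=T\omega_i\in\Lambda^+_g$, so the test matrix of volume forms is
$$F_i\wedge F_j=\langle T\omega_i,T\omega_j\rangle\,\dvol_g=(T^2)_{ij}\,\dvol_g,$$
which is a (positive) definite matrix because $T^2$ is positive definite by hypothesis on $T$. By the criterion of \S\ref{conformal_class}, $\nabla$ is a definite connection. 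Since $T$ is invertible the $F_i$ span $\Lambda^+_g$, so $\Lambda^+_\nabla=\Lambda^+_g$ as sub-bundles of $\Lambda^2$ and the conformal class determined by $\nabla$ coincides with that of $g$.

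Finally, to match the metrics themselves, I would take $\nu=\dvol_g$ as the reference volume form. The display above identifies $M_\nu$ with $T^2$, so $\sqrt{M_\nu}=|T|$ and
$$\tr\sqrt{M_\nu}=\tr|T|=\pm\tr(T)=\pm\Lambda,$$
using that $W^+$ is traceless and that the eigenvalues of $T$ all have the same sign. Substituting into \eqref{volume_equation} gives $\nu_\nabla=\tfrac{\Lambda^2}{\Lambda^2}\dvol_g=\dvol_g$, hence $g_\nabla=g$. The sign of $\nabla$ is read off from the sign of $\det T$, and this is consistent with the examples listed in the excerpt (hyperbolic and complex-hyperbolic metrics being negative definite, the standard metrics on $S^4$ and $\overline{\C\P}^2$ positive definite).

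The argument is essentially computational; the only real nuisance, rather than a genuine obstacle, is tracking the normalisation constants in the isomorphism $\Lambda^+\cong\so(\Lambda^+)$ (the $\sqrt{2}$ factor noted in \S\ref{Einstein-gauge}) and in the fibrewise inner product used to define $M_\nu$, so that the identifications $F_i=T\omega_i$ and $F_i\wedge F_j=(T^2)_{ij}\dvol_g$ hold on the nose. Once these are fixed compatibly with the conventions of the excerpt, both assertions of the lemma follow.
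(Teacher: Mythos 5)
Your argument is correct and follows essentially the same route as the paper: both compute $M_\nu = \left(\frac{\Lambda}{3}+W^+\right)^2$ from the Einstein block structure of the curvature, read off definiteness and the sign from $\frac{\Lambda}{3}+W^+$, and use $\tr\sqrt{M_\nu}=\pm\Lambda$ together with the normalisation in \eqref{volume_equation} to conclude $\nu_\nabla=\dvol_g$ and hence $g_\nabla=g$. The only difference is that you spell out the curvature decomposition and the $F_i\wedge F_j$ criterion explicitly where the paper simply says ``calculation gives''.
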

\begin{proof}
Write $\nu$ for the volume form of the Einstein metric. Calculation gives
\begin{equation}\label{M_Riemannian}
M_\nu = \left(\frac{\Lambda}{3} + W^+\right)^2
\end{equation}
When $\frac{\Lambda}{3}+W^+$ is positive definite, $\nabla$ is a positive definite connection.  We can recover the curvature of $\Lambda^+$ via
\[
\frac{\Lambda}{3} + W^+ = \sqrt{M_\nu}
\]
Now $\tr \sqrt{M_\nu} = \Lambda$ is constant. Meanwhile, when $\frac{\Lambda}{3}+W^+$ is negative definite, $\nabla$ is a \emph{negative} definite connection. In this case we have
\[
\frac{\Lambda}{3} + W^+ = - \sqrt{M_\nu}
\]
and $-\tr\sqrt{M_\nu} = \Lambda$ is again equal to the Einstein constant. 
In either case, one sees that $\nu= \nu_\nabla$ and so $g = g_\nabla$.
\end{proof}

%
  
\subsection{Reformulation of the Einstein equations}

We will now give a second order PDE for $A$ which implies that $g_A$ is Einstein. The key is the following well-known observation: an oriented Riemannian 4-manifold $(X,g)$ is Einstein if and only if the Levi--Civita connection on $\Lambda^+$ is a self-dual instanton (see for example \cite{Atiyah1978Self-duality-in}). 

When $A$ is a definite connection in a bundle $E$, it is automatically a self-dual instanton for $g_A$ and, moreover, $E$ is \emph{isomorphic} to $\Lambda^+_A$. To compare $A$ and the Levi-Civita connection on $\Lambda^+_A$ we need an isometry $E \to \Lambda^+_A$. As explained above, the curvature  provides an isomorphism $F_A \colon E \to \Lambda^+_A$ but it is not necessarily isometric, the defect being measured by $M_A \in \End(E)$. To correct for this, define $\Phi_A \colon E \to \Lambda^+_A$ by
\begin{equation}\label{Phi}
\Phi_A = \pm F_A \circ M_A^{-1/2}
\end{equation}
where the sign here agrees with that of the connection $A$. This is an isometry, by definition of $M_A$, which is moreover orientation preserving thanks to the sign.  

\begin{definition}
Given a definite connection  $A$ and $\Phi_A \colon E \to \Lambda^+_A$ defined as in (\ref{Phi}), we write $\LC(A) = \Phi_A^*\nabla$ for the pull-back to $E$ of the $g_A$-Levi-Civita connection in $\Lambda^+_A$.
\end{definition}

In the following result, $\Phi_A$ is treated as a 2-form with values in $E$ (via the identification $\Hom(E, \Lambda^+_A) \cong \Lambda^+_A \otimes E$). The connection $A$ defines a coupled exterior derivative $\diff_A$ on $E$-valued forms and so $\diff_A \Phi_A$ is a 3-form with values in $E$. 

\begin{theorem}[\cite{Krasnov2011Pure-Connection}]
Let $A$ be a definite connection, $\Lambda$ a non-zero constant whose sign agrees with that of $A$ and $\Phi_A$ defined as in (\ref{Phi}). If 
\[
\diff_A \Phi_A = 0,
\] 
then $A = \LC(A)$, $g_A$ is an Einstein metric with $\Ric(g_A) = \Lambda g_A$ and $\frac{\Lambda}{3}+W^+$ definite. Conversely all such  metrics are attained this way.
\end{theorem}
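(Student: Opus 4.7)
The plan is to recognize $\diff_A\Phi_A=0$ as the first Cartan structure equation in disguise, forcing $A$ to coincide with the pulled-back Levi-Civita connection, after which the Einstein property of $g_A$ follows from the classical fact recalled above. Concretely, I would pick a local orthonormal frame $e_1,e_2,e_3$ of $E$, set $\phi_k=\Phi_A(e_k)$ (automatically an orthonormal frame of $\Lambda^+_A$ since $\Phi_A$ is an isometric isomorphism), and write the connection 1-forms of $A$ in this frame as antisymmetric $A^j_k$. Direct expansion then gives
\[
\diff_A\Phi_A = \sum_k\bigl(\diff\phi_k+\textstyle\sum_j A^k_j\wedge \phi_j\bigr)\otimes e_k,
\]
so $\diff_A\Phi_A=0$ is exactly the first structure equation on $\Lambda^+_A$, in the frame $(\phi_k)$, for the push-forward of $A$ via $\Phi_A$. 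By the classical uniqueness of the torsion-free metric connection (equivalent to the pointwise map $a\mapsto a\wedge\omega$ being a bundle isomorphism $\Omega^1(\so(\Lambda^+_A))\to\Omega^3(\Lambda^+_A)$, where $\omega=\sum\phi_i\otimes\phi_i$ is the tautological section), this equation characterizes the $g_A$-Levi-Civita connection $\nabla$. Hence $\diff_A\Phi_A=0 \Longleftrightarrow \Phi_A^*\nabla=A \Longleftrightarrow A=\LC(A)$.

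Granted $A=\LC(A)$, the isometry $\Phi_A$ intertwines $(E,A)$ with $(\Lambda^+_A,\nabla)$. Since $A$ is a self-dual instanton by construction of the conformal class underlying $g_A$, so is $\nabla$ on $\Lambda^+_A$, forcing $g_A$ to be Einstein by the equivalence recalled at the start of this subsection. To identify the Einstein constant with $\Lambda$, I would apply the preceding Lemma to $g_A$ to obtain $\frac{\Lambda'}{3}+W^+=\pm\sqrt{M_{\nu_A}}$ with sign matching that of $\nabla$, hence of $A$; taking traces and using $\tr W^+=0$ together with the defining normalization $\tr\sqrt{M_A}=\pm\Lambda$ forces $\Lambda'=\Lambda$. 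Definiteness of $\frac{\Lambda}{3}+W^+$ is then immediate from positivity of $\sqrt{M_A}$.

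For the converse, given an Einstein metric $g$ with $\Ric=\Lambda g$ and $\frac{\Lambda}{3}+W^+$ definite, I would take $E=\Lambda^+$ and $A=\nabla$; the preceding Lemma says $A$ is definite (with sign matching $\Lambda$) and $g_A=g$. A short computation using $F_\nabla=\frac{\Lambda}{3}+W^+$ as an endomorphism of $\Lambda^+$ and $M_{\nu_A}=(\frac{\Lambda}{3}+W^+)^2$ shows $\Phi_A=\mathrm{id}_{\Lambda^+}$, whence $A=\LC(A)$ trivially and the ``only if'' direction of the first step yields $\diff_A\Phi_A=0$. The main obstacle will be the first step, specifically justifying invariantly that the first structure equation characterizes $\nabla$ among metric connections on $\Lambda^+_A$; everything else reduces to the preceding Lemma and the standard instanton/Einstein equivalence.
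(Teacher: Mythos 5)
Your opening step (recognising $\diff_A\Phi_A=0$ as the vanishing of the torsion of the pushed-forward connection, and invoking uniqueness of the torsion-free metric connection on $\Lambda^+_A$) and your converse both track the paper's proof. The genuine gap is in the middle, where you identify the Einstein constant and claim definiteness of $\frac{\Lambda}{3}+W^+$. Once you know $g_A$ is Einstein with some constant $\Lambda'$, equation (\ref{M_Riemannian}) gives $M_{\nu_A}=\left(\frac{\Lambda'}{3}+W^+\right)^2$, and the definiteness of the connection only tells you that $\frac{\Lambda'}{3}+W^+$ is \emph{invertible}, not definite. Your identity $\frac{\Lambda'}{3}+W^+=\pm\sqrt{M_{\nu_A}}$ holds only when all eigenvalues of $\frac{\Lambda'}{3}+W^+$ have the same sign, i.e.\ exactly when it is definite --- which is part of the conclusion to be proved --- so the remark that ``definiteness is then immediate from positivity of $\sqrt{M_A}$'' is circular. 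If the eigenvalues $\lambda_i$ had mixed signs, then $\tr\sqrt{M_A}=\sum_i|\lambda_i|\neq\left|\sum_i\lambda_i\right|=|\Lambda'|$, and the normalisation $\tr\sqrt{M_A}=|\Lambda|$ would not force $\Lambda'=\Lambda$. Note also that you cannot simply ``apply the preceding Lemma to $g_A$'': definiteness of $\frac{\Lambda'}{3}+W^+$ is a hypothesis of that Lemma, so only the formula (\ref{M_Riemannian}) from its proof is available to you at this stage.

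The paper closes this gap with an argument your proposal is missing: assume $\frac{\Lambda}{3}+W^+$ is invertible but indefinite; then $\Lambda^+_A$ splits into positive and negative eigenbundles, of ranks $1$ and $2$, and $\Phi_A$ transports this to a splitting of $E$. One then checks that $\diff_A\Phi_A=0$ forces the rank-one sub-bundle to be $A$-parallel, which in turn forces the curvature $F_A$ to have kernel (its components span at most a line in $\so(E)$), contradicting the definiteness of $A$. Only after this exclusion does the square-root identification $\frac{\Lambda}{3}+W^+=\pm\sqrt{M_A}$, and hence the trace/normalisation argument giving $\Ric(g_A)=\Lambda g_A$ and the definiteness statement, go through. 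You need to supply this (or an equivalent) argument; the rest of your proposal is essentially the paper's proof.
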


\begin{proof}[Sketch of proof]
If $A = \LC(A)$ then the fact that $A$ is a self-dual instanton implies the Levi-Civita connection is also and hence that $g_{A}$ is Einstein.

We now need a way to recognise $\LC(A)$ amongst all metric connections in $E$, something we can do via \emph{torsion}. Given any metric connection $\nabla$ on $\Lambda^+$, its torsion $\tau(\nabla) \in \Hom(\Lambda^+, \Lambda^3)$ is defined, just as for affine connections, as the difference 
$
\tau(\nabla) = \diff  - \sigma \circ \nabla \colon \Omega^+ \to \Omega^3
$
where $\diff$ is the exterior derivative and $\sigma \colon \Lambda^1 \otimes \Lambda^+ \to \Lambda^3$ is skew-symmetrisation. Paralleling the standard definition of the Levi--Civita connection on the tangent bundle, \emph{the Levi--Civita connection on $\Lambda^+$ is the unique metric connection which is torsion free}. (A proof of this well-known fact can be found in \cite{Fine2011A-gauge-theoret}.) 

We can interpret this from the point of view of $E$. Given a metric connection $B$ in $E$, we push it forward via $\Phi_A$ to a metric connection in $\Lambda^+_A$. The torsion $\tau(\Phi_{A*}(B))$ is identified via $\Phi_A$ with the 3-form $\diff_B \Phi_A$: here the isometry $\Phi_A \colon E \to \Lambda^+_A$ is viewed as an $E^*$-valued 2-form, its coupled exterior derivative is then a section of $\Lambda^3 \otimes E^* \cong \Hom(\Lambda^+_A,\Lambda^3)$ which matches up with $\tau(\Phi_{A*}(B))$. (This calculation is also given in \cite{Fine2011A-gauge-theoret}.) We now see that $\LC(A)$ is the unique metric connection $B$ for which $\diff_B \Phi_A = 0$. 

The upshot of this discussion is that if $\diff_A \Phi_A = 0$ then $A = \LC(A)$ and hence $g_A$ is Einstein. It remains to show that $\Ric(g_A) = \Lambda g_A$ and $\frac{\Lambda}{3}+W^+$ is definite. 

Since the Levi-Civita connection in $\Lambda^+$ is definite, it follows from (\ref{M_Riemannian}) that $\frac{\Lambda}{3}+W^+$ is necessarily invertible. Assume for a contradiction that $\frac{\Lambda}{3}+W^+$ is indefinite, so that $\Lambda^+$ splits into positive and negative eigenbundles. This induces a splitting of $E$ via $\Phi_A$. Now one can check that the equation $\diff_A \Phi_{A} = 0$ forces the sub-bundle of rank one to be $A$-parallel. This implies the curvature of $F_A$ has kernel and so contradicts the fact that $A$ is definite. We also now see from (\ref{M_Riemannian}) that $\Ric(g_A) = \Lambda g_A$, since $\pm \tr \sqrt{M_A} =  \Lambda$, (with sign corresponding to that of $A$).

To prove that all such metrics arise, one simply checks that for an Einstein metric $g$ of the given sort the Levi-Civita connection on $\Lambda^+$ is a definite connection $A$ for which $\Lambda^+_A = \Lambda^+$ is unchanged. Moreover, $\nu_A = \dvol_g$, whilst $\Phi_A \colon \Lambda^+ \to \Lambda^+$ is the identity. Hence $\diff_A \Phi_A=0$ and $g_A = g$.
\end{proof}

An important special case arises when $M_A $ is a multiple of the identity. Since $\tr\sqrt{M_A}$ is constant it follows that $M_A$ is a \emph{constant} multiple of the identity and so $\Phi_A$ is a constant multiple of $F_A$. Now $\diff_A \Phi_A = 0$ follows automatically from the Bianchi identity. Equation (\ref{M_Riemannian}) shows that $g_A$ is in fact anti-self-dual and Einstein, with non-zero scalar curvature. We state this as a separate result:

\begin{theorem}[\cite{Fine2011A-gauge-theoret}]
Let $A$ be a definite connection. If $M_A$ is a multiple of the identity, then $g_A$ is an anti-self-dual Einstein metric with non-zero scalar curvature. Conversely all such metrics are attained this way.
\end{theorem}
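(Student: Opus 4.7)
The plan is to read off both directions from the previous theorem and from the lemma identifying $M_\nu$ with $\bigl(\tfrac{\Lambda}{3}+W^+\bigr)^2$, the only bit of genuine work being the observation that the normalisation built into (\ref{volume_equation}) promotes a pointwise statement about $M_A$ to a global constant one. For the forward implication, I would first note that $\tr\sqrt{M_A}=\pm\Lambda$ is identically constant by the equivalent characterisation of $\nu_A$ given just after (\ref{volume_equation}); since $M_A$ is pointwise a scalar multiple of the identity, this forces $M_A=\tfrac{\Lambda^2}{9}\cdot\mathrm{Id}$ globally. Consequently $\Phi_A=\pm F_A\circ M_A^{-1/2}$ is just a constant rescaling of $F_A$, so $\diff_A\Phi_A=0$ follows immediately from the Bianchi identity $\diff_A F_A=0$, and the preceding theorem delivers that $g_A$ is Einstein with $\Ric(g_A)=\Lambda g_A$ and $\tfrac{\Lambda}{3}+W^+$ definite.

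To upgrade ``Einstein'' to ``anti-self-dual Einstein'' I would then feed this back into (\ref{M_Riemannian}): the identity $\bigl(\tfrac{\Lambda}{3}+W^+\bigr)^2=M_A=\tfrac{\Lambda^2}{9}\cdot\mathrm{Id}$, combined with the known definite sign of $\tfrac{\Lambda}{3}+W^+$ inherited from the sign of $A$, pins this endomorphism down to $\tfrac{\Lambda}{3}\cdot\mathrm{Id}$ in both the positive- and negative-definite cases (a one-line sign check using $\sqrt{M_A}=\tfrac{|\Lambda|}{3}\cdot\mathrm{Id}$ in each case), so $W^+\equiv 0$. The scalar curvature $s=4\Lambda$ is then nonzero because $\Lambda$ was assumed nonzero.

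The converse is a direct application of the earlier lemma: if $g$ is anti-self-dual Einstein with $\Ric=\Lambda g$ and $\Lambda\neq 0$, then $\tfrac{\Lambda}{3}+W^+=\tfrac{\Lambda}{3}\cdot\mathrm{Id}$ is a definite endomorphism of $\Lambda^+$, so the lemma produces a definite connection $\nabla$ (the Levi--Civita connection on $\Lambda^+$) with $g_\nabla=g$, and the same lemma's formula (\ref{M_Riemannian}) then reads off $M_\nabla=\tfrac{\Lambda^2}{9}\cdot\mathrm{Id}$. No step looks like a serious obstacle; the content of the statement is essentially that rescaling $F_A$ by a constant does not spoil the Bianchi identity, together with the algebraic dictionary between $M_\nabla$ and $\tfrac{\Lambda}{3}+W^+$.
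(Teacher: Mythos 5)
Your proposal is correct and follows essentially the same route as the paper: constancy of $\tr\sqrt{M_A}$ upgrades the pointwise statement to $M_A=\tfrac{\Lambda^2}{9}\,\mathrm{Id}$, so $\Phi_A$ is a constant multiple of $F_A$ and $\diff_A\Phi_A=0$ by the Bianchi identity, after which the reformulation theorem together with equation~(\ref{M_Riemannian}) forces $W^+\equiv 0$ and $s=4\Lambda\neq 0$, with the converse read off from the earlier lemma. No gaps; this matches the paper's argument.
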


\subsection{The action}

A remarkable feature of definite connections and the corresponding metrics is that \emph{the total volume is an action functional for the theory}.

\begin{definition}[\cite{Krasnov2011Pure-Connection}]
Let $A$ be a definite connection in an $\SO(3)$-bundle $E$ over a compact 4-manifold $X$. The \emph{action} of $A$ is defined to be 
\[
S(A) = \frac{\Lambda^2}{12\pi^2}\int_X \nu_A
\]
where $\nu_A$ is defined by (\ref{volume_equation}).
\end{definition}

\begin{theorem}[\cite{Krasnov2011Pure-Connection}]\label{EL_equations}
A definite connection $A$ is a critical point of $S$ if and only if $\diff_A\Phi_A= 0$. So critical points of $S$ give Einstein metrics. 
\end{theorem}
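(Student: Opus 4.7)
The plan is a direct first-variation calculation. I would compute $\delta S$ at a definite $A$ in an arbitrary direction $a \in \Omega^1(X,\so(E))$, so that $\dot F_A = \diff_A a$, and show that $\delta S$ equals, up to a nonzero constant, $-\int_X \langle \diff_A \Phi_A \wedge a\rangle$. Vanishing for every $a$ then forces $\diff_A \Phi_A = 0$ by the fundamental lemma of the calculus of variations, after which the previous theorem delivers the Einstein conclusion. To start, I would exploit the homogeneity of~(\ref{volume_equation}) to rewrite $S(A) = \tfrac{1}{12\pi^2}\int_X (\tr \sqrt{M_\nu})^2 \, \nu$ for an arbitrary background volume form $\nu$, freeze $\nu$ throughout the variation, and apply the identity $\delta \tr\sqrt M = \tfrac12 \tr(M^{-1/2}\delta M)$, which is legitimate pointwise because $M_\nu$ is positive definite whenever $A$ is. At the point around which I vary, I would then make the convenient choice $\nu = \nu_A$, so that the factor $\tr\sqrt{M_A}$ collapses to the constant $\epsilon\Lambda$, where $\epsilon = \pm1$ is the sign of $A$.

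In an orthonormal local frame $e_i$ of $\so(E)$, the defining relation $M_{ij}\,\nu = F_i \wedge F_j$ differentiates to $\delta M_{ij}\,\nu_A = F_i \wedge \diff_A a_j + F_j \wedge \diff_A a_i$, and the symmetry of $M_A^{-1/2}$ turns $\tr(M_A^{-1/2}\delta M_A)\nu_A$ into $2\sum_{i,j}(M_A^{-1/2})_{ij}\, F_i \wedge \diff_A a_j$. Now $\epsilon \sum_i (M_A^{-1/2})_{ij} F_i$ is precisely the $j$-th component of the isometry $\Phi_A = \epsilon\, F_A \circ M_A^{-1/2}$ from~(\ref{Phi}), so the two independent $\epsilon$-signs cancel and
\[
\delta S \;=\; \frac{\Lambda}{6\pi^2}\int_X \langle \Phi_A \wedge \diff_A a\rangle,
\]
where $\langle\,\cdot\,\wedge\,\cdot\,\rangle$ denotes wedging of forms combined with contraction of $E$-indices via the fibrewise inner product.

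Since $\diff_A$ is a metric connection and $\Phi_A$ has even degree, Stokes' theorem gives $\int_X \langle \Phi_A \wedge \diff_A a\rangle = -\int_X \langle \diff_A \Phi_A \wedge a\rangle$. The pointwise pairing between $E$-valued $3$-forms and $E$-valued $1$-forms is non-degenerate, so $\delta S$ vanishes for every $a$ if and only if $\diff_A\Phi_A = 0$; gauge-invariance of $S$ accounts automatically for the directions $a = \diff_A \xi$. The main obstacle I anticipate is purely bookkeeping: tracking the two independent $\epsilon$-signs (from the normalisation of $\nu_A$ and from the definition of $\Phi_A$) and the index symmetrisations, together with the mild but necessary observation that the identity $\delta \tr\sqrt{M_A} = \tfrac12 \tr(M_A^{-1/2}\delta M_A)$ only makes sense because definiteness of $A$ is an open condition, keeping $M_A$ strictly positive along the perturbation.
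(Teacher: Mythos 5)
Your proposal is correct and is essentially the paper's own argument: both are the same first-variation computation arriving at $\delta S = \frac{\Lambda}{6\pi^2}\int_X(\Phi_A,\diff_A a)\,\nu_A$, and your frozen-background-form bookkeeping (using homogeneity of (\ref{volume_equation}) instead of differentiating $M_{ij}\nu_A = F_i\wedge F_j$ with the constraint $\tr\sqrt{M_A}=\pm\Lambda$) and your direct Stokes/wedge-pairing conclusion (instead of the paper's $\diff_A^*\Phi_A = -*\diff_A\Phi_A$ via self-duality of $\Phi_A$) are only cosmetic variants.
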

\begin{proof}
Pick a local orthonormal frame $e_1, e_2, e_3$ for $E$ and write $F_A = \sum F_i \otimes e_i$ (where we have as always identified $\so(E) \cong E$ via the cross product). Then  $M_{ij} \nu_A = F_i \wedge F_j$, where $M_{ij} = ( F_i, F_j)$ is the matrix of pointwise innerproducts of the $F_i$ with respect to $g_A$. Making an infinitesimal change $\dot{A} = a$ of the connection gives 
\[
\dot{M}_{ij} \nu_A + M_{ij} \dot{\nu}_A = (\diff_A a)_i \wedge F_j + F_i \wedge (\diff_A a)_j
\]
where $\diff_A a = \sum (\diff_A a)_i \otimes e_i$.  Now the fact that $\tr (\sqrt{M}_A)$ is constant implies that $\tr(M_A^{-1/2}\dot{M}_A) = 0$. Multiply the above equation by $M_A^{-1/2}$ and taking the trace; then use the fact that for any 2-form $\alpha$, $F_i \wedge \alpha = ( F_i , \alpha )\nu_A$ (since the $F_i$ are self dual) to obtain 
\begin{equation}\label{change_nu}
\dot{\nu}_A = \frac{2}{\Lambda} ( \Phi_A, \diff_A a) \nu_A \, .
\end{equation}
Note that the sign in (\ref{Phi}) is correctly captured by the factor of $\Lambda$ here. It follows that 
\[
\dot{S} = \frac{\Lambda}{6\pi^2}\int_X (\diff_A^*\Phi_A, a ) \nu_A
\]
and so the critical points are those $A$ with $\diff_A^*\Phi_A = 0$. On $E$-valued 2-forms, $\diff_A^* = - * \diff_A *$. Since $\Phi_A$ is a self-dual 2-form, $* \Phi_A=\Phi_A$ and so $\diff_A^* \Phi_A = - * \diff_A \Phi_A$. Hence the critical points of $S$ are precisely those $A$ for which $\diff_A \Phi_A = 0$. 
\end{proof}

\subsection{Topological bounds}

We next explain topological bounds on $S$. 

\begin{proposition}[cf.\ \cite{Fine2011A-gauge-theoret}]
There is an a~priori bound 
\[
\frac{1}{3}\left(2\chi(X) + 3\tau(X)\right) < S(A) \leq 2\chi(X) + 3\tau(X)
\] 
for all definite connections. Moreover, $S(A) = 2\chi(X) + 3\tau(X)$ if and only if $M_A$ is a multiple of the identity and hence $g_A$ is anti-self-dual and Einstein with non-zero scalar curvature. 
\end{proposition}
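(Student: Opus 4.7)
The plan is to write $\int_X \tr(M_A)\,\nu_A$ as a topological invariant by Chern--Weil, and then to apply a pointwise estimate on the eigenvalues of $\sqrt{M_A}$, whose sum is pinned by the defining property~(\ref{volume_equation}) of $\nu_A$.

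First I would identify the underlying topological number. Since $F_A$ is an isomorphism from $E$ to $\Lambda^+_A$ as oriented rank-three vector bundles, $p_1(E) = p_1(\Lambda^+_A)$; combined with the standard identity $p_1(\Lambda^+) = 2\chi(X) + 3\tau(X)$ for an oriented Riemannian 4-manifold (a consequence of the Hirzebruch signature theorem together with Chern--Gauss--Bonnet), the Pontryagin number equals $2\chi + 3\tau$. Choosing an oriented orthonormal local frame of $\so(E) \cong E$ and writing $F_A = \sum F_i \otimes e_i$, self-duality of the $F_i$ with respect to $g_A$ gives $F_i \wedge F_j = (M_A)_{ij}\,\nu_A$ and hence $\sum_i F_i \wedge F_i = \tr(M_A)\,\nu_A$. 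The standard Chern--Weil representative of $p_1$ for an $\SO(3)$-connection, with its usual normalisation, then yields
\[
\int_X \tr(M_A)\,\nu_A \;=\; 4\pi^2\bigl(2\chi(X) + 3\tau(X)\bigr).
\]

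The core of the argument is now an elementary pointwise inequality. Let $\mu_1, \mu_2, \mu_3$ denote the eigenvalues of $\sqrt{M_A}$ at a point; they are strictly positive because $A$ being definite forces $M_A$ to be positive definite everywhere, and they satisfy $\mu_1 + \mu_2 + \mu_3 = \pm\Lambda$ (with sign that of $A$) by the very definition of $\nu_A$. Consequently
\[
\tfrac{\Lambda^2}{3} \;\leq\; \mu_1^2 + \mu_2^2 + \mu_3^2 \;=\; \tr(M_A) \;<\; \Lambda^2.
\]
The left-hand inequality is Cauchy--Schwarz and is saturated exactly when the $\mu_i$ all coincide, i.e.\ when $M_A$ is a multiple of the identity. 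The right-hand one follows from expanding $\Lambda^2 = (\sum \mu_i)^2 = \sum \mu_i^2 + 2\sum_{i<j}\mu_i\mu_j$ and noting that the last sum is strictly positive; compactness of $X$ makes it uniformly strict.

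Integrating both estimates against $\nu_A$ and substituting the topological identity gives the two inequalities of the proposition. Equality on the upper side occurs precisely when $M_A$ is a pointwise multiple of the identity, and the companion theorem on anti-self-dual Einstein metrics stated earlier in the excerpt then identifies $g_A$ as an anti-self-dual Einstein metric with non-zero scalar curvature. The only real care needed anywhere is in fixing the sign conventions in Chern--Weil so that $p_1(E)$ pairs with $+(2\chi+3\tau)$; this is automatic from the fact that both sides must have the same sign as the manifestly positive integrand $\tr(M_A)\,\nu_A$.
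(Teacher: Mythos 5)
Your proof is correct and follows essentially the same route as the paper: the pointwise inequality $\tr(M) < \bigl(\tr\sqrt{M}\bigr)^2 \leq 3\tr(M)$ (which you phrase via Cauchy--Schwarz on the eigenvalues of $\sqrt{M_A}$, using $\tr\sqrt{M_A} = \pm\Lambda$), combined with the Chern--Weil identity $\sum F_i \wedge F_i = 4\pi^2\, p_1(A)$ and $p_1(E) = p_1(\Lambda^+) = 2\chi(X) + 3\tau(X)$. The only cosmetic difference is that you normalise directly at $\nu = \nu_A$ rather than working with an arbitrary background volume form.
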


\begin{proof}
Given a self-adjoint positive-definite 3-by-3 matrix $M$, 
\[
\tr(M) < \left(\tr(\sqrt{M})\right)^2 \leq 3 \tr(M)
\]
with equality on the right hand side if and only if $M$ is a multiple of the identity. It follows from (\ref{volume_equation}) that for any choice of volume form $\nu$, $\frac{1}{\Lambda^2}\tr(M_\nu) \nu <  \nu_A \leq \frac{3}{\Lambda^2} \tr(M_\nu) \nu$ with equality on the right if and only if $M_\nu$ (and hence $M_A$), is a multiple of the identity. 

Now $\tr(M_\nu)\nu$ is a multiple of the first Pontryagin form of $A$. To see this, in a local trivialisation in which $F_A = \sum F_i \otimes e_i$, we have $\tr(M_\nu)\nu = \sum F_i \wedge F_i$. Meanwhile, $p_1(A) = \frac{1}{4\pi^2} \sum F_i \wedge F_i$. Hence
\[
\frac{4\pi^2}{\Lambda^2} p_1(A) <  \nu_A \leq \frac{12 \pi^2}{\Lambda^2}p_1(A)
\]
This, together with the fact that $p_1(E) = p_1(\Lambda^+) = 2\chi(X) + 3\tau(X)$,  gives the result.
\end{proof}

As an immediate corollary we see that the existence of a definite connection implies ``one-half'' of the Hitchin--Thorpe inequality satisfied by Einstein 4-manifolds \cite{Hitchin1974Compact-four-di,Thorpe1969Some-remarks-on}:

\begin{corollary}[\cite{Fine2009Symplectic-Cala}]
If $X$ carries a definite connection then $2\chi(X) + 3 \tau(X) >0$.
\end{corollary}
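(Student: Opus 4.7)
The plan is a direct deduction from the preceding proposition. That proposition supplies the upper bound $S(A) \leq 2\chi(X) + 3\tau(X)$ for any definite connection $A$, so it suffices to show $S(A) > 0$ whenever $A$ is definite. Strict positivity of $S(A)$ combined with this (non-strict) upper bound then yields $2\chi(X) + 3\tau(X) > 0$.

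To verify $S(A) > 0$, I would simply unpack the definition
\[
S(A) = \frac{\Lambda^2}{12\pi^2}\int_X \nu_A.
\]
Since $\Lambda \neq 0$, the prefactor is strictly positive. For the integral, equation (\ref{volume_equation}) writes $\nu_A = \frac{1}{\Lambda^2}\bigl(\tr\sqrt{M_\nu}\bigr)^2 \nu$ for any positively oriented background volume form $\nu$. Definiteness of $A$ means the matrix $M_\nu$ determined by $M_{ij}\,\nu = F_i \wedge F_j$ is pointwise positive-definite --- this is exactly the algebraic condition, recalled in \S\ref{conformal_class}, that lets the span of the $F_i$ serve as $\Lambda^+_A$ for some conformal structure. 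Consequently $\tr\sqrt{M_\nu}$ is a strictly positive function on $X$, $\nu_A$ is a pointwise positive multiple of $\nu$, and compactness of $X$ gives $\int_X \nu_A > 0$.

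There is essentially no obstacle: the corollary is immediate from the upper bound once one observes that $S(A)$ is, up to a positive constant, an honest Riemannian volume. The only small point worth stressing is the strict inequality, which reflects the fact that the upper bound in the proposition is non-strict while the positivity of $S(A)$ is strict.
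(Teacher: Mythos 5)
Your proposal is correct and is essentially the paper's own (implicit) argument: the corollary is stated as immediate from the Proposition, and the intended deduction is exactly that $S(A)$ is a positive multiple of the total volume of $g_A$, hence strictly positive, so the upper bound $S(A)\leq 2\chi(X)+3\tau(X)$ forces $2\chi(X)+3\tau(X)>0$. Your observation that positivity of $M_\nu$ (with respect to the orientation induced by $A$ and a positively oriented $\nu$) makes $\nu_A$ a genuine volume form is the right justification of the strict inequality.
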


We remark that to date \emph{this is the only known obstruction to the existence of definite connections}. 

\subsection{Potential extensions of the formalism}\label{extensions}

The Einstein metrics which can be reached using the approach given above are precisely those for which $\frac{\Lambda}{3}+W^+$ is a definite endomorphism of $\Lambda^+$. We now discuss briefly how one might extend the formalism to include more Einstein metrics. 

One case that is currently missing is when $(M,g)$ is Einstein and $\frac{\Lambda}{3}+W^+$ is invertible, but not definite. In this case, the Levi-Civita connection $A$ on $\Lambda^+$ is still a definite connection, but $g_A \neq g$. This is because in the above recipe for $g_A$, we fix $\tr(\sqrt{M})=|\Lambda|$, where $\sqrt{M}$ is the \emph{positive} square root of $(\frac{\Lambda}{3}+W^+)^2$. But this is only equal to $\pm(\frac{\Lambda}{3}+W^+)$ when all eigenvalues of the later have the same sign.  The solution is, of course, to choose a different branch of the square root. Using the right choice for $\sqrt{M}$, the normalisation $\tr(\sqrt{M}) = |\Lambda|$ gives $g_A = g$ and the theory proceeds as before. This raises two important questions. Firstly, given a definite connection $A$, how can one tell in advance which branch of the square root to take?  Secondly, one could imagine a situation in which the Levi-Civita connection $A$ of an Einstein metric was, say, positive definite, i.e., that $\det(\frac{\Lambda}{3} + W^+) >0$, and yet $\Lambda <0$. This means that one might not know in advance what sign to take for $\Lambda$. 

The second type of Einstein metric which does not fit directly in the above set-up are those for which eigenvalues of $\frac{\Lambda}{3}+W^+$ vanish somewhere on the manifold. Given such an Einstein metric, with Levi-Civita connection $A$ in $\Lambda^+$, one can proceed as above up to the definition of $\Phi_A = \pm F_A \circ M_A^{-1/2}$, which is not immediately valid where eigenvalues of $M_A$ vanish. One the other hand, since we started with an Einstein metric, one can check that $\Phi_A$ extends smoothly over the seemingly singular locus to a globally defined isometry $\Lambda^+ \to \Lambda^+$. To include such Einstein metrics, we should consider more general connections, which we call \emph{semi-definite}. These are connections for which $M_A \geq 0$ and that $M_A>0$ in at least one point. We demand moreover that $\Phi_A = \pm F_A \circ M_A^{-1/2}$ (defined initially where $M_A>0$) extends smoothly to an isometry $E \to \Lambda^+$ over the whole manifold. 

An example which exhibits some of the problems mentioned above is the Page metric \cite{Page1978A-compact-rotat}. This is an Einstein metric with positive scalar curvature on the one-point blow-up $X$ of $\C\P^2$. It is cohomogeneity-one, being invariant under the $\SU(2)$-action on $X$ lifting that on $\C\P^2$ which fixes the centre of the blow-up. It can be described via a path $g(t)$ of left-invariant metrics on $\SU(2)$ parametrised by $t \in (0,1)$, which collapse the fibres of the Hopf map $\SU(2) \to \C\P^1$ as $t \to 0,1$. The metric $\diff t^2 + g(t)$ on $\SU(2) \times (0,1)$ extends to give a smooth metric on the blow-up $X$. The $\C\P^1$s which compactify the ends of $\SU(2) \times (0,1)$ are the exceptional curve and the line in $\C\P^2$ which is orthogonal to the centre of the blow-up. 

One can check, by direct computation, that the Levi-Civita connection on $\Lambda^+$ of the Page metric is postive definite everywhere except for $\SU(2)\times \{t_0\}$ for a single value of $t_0$. For $t< t_0$, $\frac{\Lambda}{3}+W^+ >0$ and we are in exactly the situation treated in this article. At $t=t_0$, two eigenvalues of $\frac{\Lambda}{3}+W^+$ vanish and for $t >t_0$ these same two eigenvalues become negative. Provided one uses the correct branch of the square root as $t$ crosses $t_0$, the whole Page metric can be seen using the gauge theoretic approach. Indeed one can ``find'' the Page metric by solving the equation $\diff_A \Phi_A = 0$ for an appropriate cohomogeneity-one definite connection. This directly gives the Page metric over $t \in [0,t_0)$ and one sees clearly that two eigenvalues vanish at $t= t_0$. Then taking a different square root gives the metric over $(t_0, 1]$ and it is a simple matter to see that the two parts combine smoothly to give the whole Page metric. 

\section{Related action principles}\label{other_actions}

This section gives some additional context to the above formalism by describing some related action principles.

\subsection{The Einstein--Hilbert action}

The traditional action principle for Einstien metrics is that of Einstein--Hilbert. Given a Riemannian metric $g$, define the Einstein--Hilbert action of $g$ to be
\[
S_{\text{EH}}(g) = \int_X \left( s - 2\Lambda\right) \dvol
\]
where $s$ is the scalar curvature of $g$ and $\Lambda$ is the cosmological constant. Critical points of $S_{\text{EH}}$ are those metrics with $\Ric(g) = \Lambda g$. 

Note that the Einstein--Hilbert functional evaluated on an Einstein metric is a multiple of the volume. By contrast in the gauge-theoretic formulation the action is always the volume of the space, even away from the critical points. 

As is well known, the Einstein--Hilbert action is beset with difficulties from both the mathematical and physical points of view. Physically, the resulting quantum theory is not renormalizable. Mathematically, one hopes to use the calculus of variations to find critical points of an action. For $S_{\text{EH}}$ this is extremely hard: at a critical point the Hessian has infinitely many positive and negative eigenvalues which leads to seemingly unresolvable problems in min-max arguments. As we discuss in \S\ref{CV_ok}, it is precisely these problems that the gauge-theoretic approach outlined above aims to avoid. 

\subsection{Eddington's action for torsion-free affine connections}

The idea of using a connection as the independent variable, rather than the metric, goes back to Eddington \cite{Eddington1921A-Generalisatio}. Let $\nabla$ be a torsion-free affine connection in $TX \to X$. Such connections have a Ricci tensor, defined exactly as for the Levi--Civita connection: the curvature tensor of $\nabla$ is a section of $\Lambda^2 \otimes TX \otimes T^*X$ and so one can contract the $TX$ factor with the $\Lambda^2$ factor to produce a tensor $\Ric_\nabla \in T^*X \otimes T^*X$. 

The fact that $\nabla$ is torsion-free implies that $\Ric_\nabla$ is symmetric. We now focus on those $\nabla$ for which $\Ric_\nabla$ is a definite form.  Pick a non-zero constant $\Lambda$ which is positive if $\Ric_\nabla$ is positive definite and negative if $\Ric_\nabla$ is negative definite. We can then define a Riemannian metric on $X$ by $g = \Lambda^{-1}\Ric_\nabla$. Eddington's action is again the volume:
\[
S_{\text{Edd}}(\nabla) = \int_X \dvol_g
\]
The Euler--Lagrange equations for $S_{\text{Edd}}$ are $\nabla \Ric_\nabla =0$. In other words, $\nabla$ is both torsion-free and makes $g$ parallel, making it the Levi--Civita connection of $g$. So $\Ric_{\nabla} = \Ric_g$ is \emph{the} Ricci curvature and $g$ is Einstein by virtue of its definition as $g =\Lambda^{-1}\Ric_g$.

The parallels with our gauge-theoretic approach are obvious. An important difference, however, is the size of the space of fields. For us the connections are given locally by 12 real-valued functions ($\Lambda^1 \otimes \so(E)$ has rank 12). The relevant gauge group is that of fibrewise linear isometries $E \to E$ (not necessarily covering the identity) which has functional dimension 7 (4 diffeomorphisms plus 3 gauge rotations), leaving the space of definite connections modulo gauge with functional dimension 5. 

On the other hand, torsion-free affine connections on a 4-manifold are given locally by 40 real-valued functions ($S^2T^*X \otimes TX$ has rank 40). Moreover, the gauge group is rather small and consists of just diffeomorphisms which have functional dimension 4. This means that Eddington's theory has a configuration space of functional dimension 36. Finally the traditional theory of metrics modulo diffeomorphisms gives a configuration space of functional dimension 6. This vast increase in the ``number of fields'' makes Eddington's theory even more complicated than the traditional Einstein--Hilbert theory. This is why, in spite of the fact that the action in this formulation is a multiple of the volume, it is unlikely to give a useful variational principle. 

\subsection{Hitchin's functional and stable forms}

The use of total volume as an action has also appeared in the work of Hitchin \cite{Hitchin2000The-geometry-of,Hitchin2001Stable-forms-an}. In these articles, Hitchin studies ``stable forms''. A form $\rho \in \Lambda^p\R^n$ is called stable if its $\GL(n,\R)$-orbit is open in $\Lambda^p\R^n$. A symplectic form on $\R^{2m}$ is one example, but there are also ``exceptional'' examples with $p=3$ and $n=6,7,8$.  A form $\rho$ on a manifold $M$ is called stable if it is stable at each point. The existence of such a form gives a reduction of the structure group of $TM$ to $G \subset \GL(n,\R)$, the stabiliser of $\rho$ at a point. In the three exceptional cases mentioned above, $G$ is $\SL(3,\C)$, $G_2$ and $\PSU(3)$ in dimensions 6, 7 and 8 respectively. In each case $G$ preserves a volume form and so the stable form defines in turn a volume form $\phi(\rho)$ on $M$. 

Hitchin proceeds to study the corresponding volume functional $\rho \mapsto \int_M \phi(\rho)$ restricted those $\rho$ which are closed and lie in a fixed cohomology class. The critical points define interesting geometries. For example, when $p=3, n=6$, a critical stable form defines a complex structure on $M$ with trivial canonical bundle, whilst when $p=3, n=7$, a critical stable form defines a metric on $M$ with holonomy $G_2$ (in particular an Einstein metric). 

There is a formal parallel with the set-up described in \S\ref{Einstein-gauge}. The curvature $F_A$ of a definite connection is modelled on a ``stable'' element of $\Lambda^2(\R^4) \otimes \R^3$, in the sense that its  orbit under the action of $\GL(4,\R) \times \SO(3)$ is open. Moreover, the stabiliser preserves the volume form defined by (\ref{volume_equation}). The condition that the stable form be closed has been replaced by the Bianchi identity $\diff_A F_A = 0$ and the restriction to a fixed cohomology class can be thought of as analogous to considering connections on a fixed bundle: $F_{A+a} = F_A + \diff_Aa + a\wedge a$. The main difference between the situation studied by Hitchin and our set-up is that we work with differential forms taking values in a vector bundle over the manifold (curvature 2-forms), while Hitchin studies ordinary differential forms. Because of this, there are more options for constructing a volume form, leading to a family of actions ``deforming'' the action $S$ considered here. This is discussed in~\cite{Krasnov2011Gravity-as-a-di}.

\section{The symplectic geometry of definite connections}\label{symplectic_section}

\subsection{The symplectic manifold associated to a definite connection}

A definite connection in $E \to X$ is not just a ``potential'' for a Riemannian metric on $X$, it also defines a symplectic form on the unit sphere bundle $\pi \colon Z \to X$ of $E$. The symplectic point of view on definite connections was the original motivation for their introduction in \cite{Fine2009Symplectic-Cala}. We briefly recall the construction here.

Given any (not necessarily definite) connection $A$ we define a closed 2-form $\omega_A$ on $Z$ as follows. Let $V \to Z$ be the vertical tangent bundle (i.e., the sub-bundle $\ker \pi_* \leq TZ$). If we orient the fibres of $E$, then $V$ becomes an oriented $\SO(2)$-bundle, or equivalently a Hermitian complex line bundle. The form $\omega_A$ is the curvature form of a certain unitary connection $\nabla$ in $V$ defined as follows. Given a section of $V$, we differentiate it vertically in $Z$ by using the Levi-Civita connection on the $S^2$-fibres. Meanwhile to differentiate horizontally, we use $A$ to identify nearby fibres of $Z \to X$. Together this defines $\nabla$ and hence the closed form $\omega_A= \frac{i}{2\pi}F_\nabla$. 

\begin{lemma}[\cite{Fine2009Symplectic-Cala}]\label{symplectic_definite}
The form $\omega_A$ is symplectic if and only if $A$ is a definite connection.
\end{lemma}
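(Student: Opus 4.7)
The plan is to compute $\omega_A$ explicitly in a local trivialisation of $Z$ and show that $\omega_A^3$ is nowhere zero precisely when the matrix $M_{ij}$ associated to $A$ in §\ref{conformal_class} is definite at every point.

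Fix a point $x\in X$ and a local orthonormal frame $e_1,e_2,e_3$ of $E$, so that $Z$ is locally $U\times S^2$, with a fibre point described by a unit vector $v=(v_1,v_2,v_3)\in S^2\subset \R^3$. Write $F_A=\sum_i F_i\otimes e_i$. The connection $A$ provides a horizontal distribution $H\subset TZ$ complementary to $V$, and $\nabla$ restricted to $V$ decomposes along this splitting. The main computational step is to verify that the curvature $F_\nabla$ at $(x,v)$ has three pieces:
\begin{itemize}
\item on two vertical vectors it equals the round area form $\omega_{\text{vert}}$ of the fibre $S^2$ (since the Levi-Civita connection on the $2$-sphere of radius one has curvature equal to the area form);
\item on two horizontal lifts of $u_1,u_2\in T_xX$ it equals $\sum_i v_i F_i(u_1,u_2)$, the projection of $F_A(u_1,u_2)\in E_x$ onto the radial line $\R v$;
\item on one horizontal and one vertical vector it vanishes, using that $A$ is a metric connection and that horizontal lift commutes with the fibrewise rotations generated by radial vectors.
\end{itemize}
Accordingly $\omega_A=\omega_{\text{vert}}+\pi^*\!\bigl(\sum_i v_i F_i\bigr)$, where the second summand is pulled back horizontally at each $(x,v)$.

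Next I compute $\omega_A^3$. Since $\omega_{\text{vert}}$ is a $2$-form supported on the $2$-dimensional vertical distribution, $\omega_{\text{vert}}^2=0$; and since $\sum v_i F_i$ is horizontal and $\dim X=4$, $(\sum v_i F_i)^3=0$. Thus
\[
\omega_A^3=3\,\omega_{\text{vert}}\wedge\Bigl(\sum_i v_i F_i\Bigr)^2 = 3\,\omega_{\text{vert}}\wedge\Bigl(\sum_{ij}v_iv_j\,F_i\wedge F_j\Bigr).
\]
Because $\omega_{\text{vert}}$ is non-degenerate on the fibre directions, $\omega_A^3$ is a top form on $Z$ at $(x,v)$ if and only if the horizontal $4$-form $\sum_{ij}v_iv_j\,F_i\wedge F_j$ is non-zero at $x$. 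Choosing any local volume form $\nu$ and writing $F_i\wedge F_j=M_{ij}\nu$, this is the condition $v^{T}Mv\neq 0$ for every unit $v\in\R^3$, i.e.\ $M$ is a definite matrix at every point of $X$.

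Finally, by the discussion recalled in §\ref{conformal_class}, definiteness of the matrix $F_i\wedge F_j$ of volume forms is precisely the definition (after a small algebraic check) of $A$ being a definite connection. Hence $\omega_A$ is symplectic if and only if $A$ is definite. The main obstacle I expect is the verification of the mixed horizontal--vertical part of $F_\nabla$ being zero and of the horizontal--horizontal part being $\sum v_i F_i$; both ultimately reduce to standard formulae for curvature in the horizontal/vertical splitting of a sphere bundle, but the signs and normalisations have to be tracked carefully to match the convention $\omega_A=\tfrac{i}{2\pi}F_\nabla$ in the statement.
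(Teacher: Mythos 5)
Your computation is correct and is essentially the argument of the cited source: the paper itself offers no proof of this lemma, deferring to \cite{Fine2009Symplectic-Cala}, where the proof is exactly this block-decomposition of $F_\nabla$ into a fibrewise area form plus $\langle v,\pi^*F_A\rangle$ with vanishing mixed part, followed by the observation that $\omega_A^3\neq 0$ amounts to $v^TMv\neq 0$ for all unit $v$. The only caveat is your last step: ``$M$ definite at every point'' is not literally Definition~\ref{definite} but the equivalent condition asserted in \S\ref{conformal_class} (the check being that null vectors of the wedge pairing on $\Lambda^2\R^4$ are precisely the decomposable forms, so a definite span of $F_1,F_2,F_3$ is equivalent to no $2$-plane annihilating all $F_i$), which you correctly flag as a small algebraic verification.
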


\begin{remark}
This is inspired in part by twistor theory. The isometric identification $E \to \Lambda^+_A$ gives a diffeomorphism of $Z$ with the twistor space of $(X,g_A)$. However, the twistor almost complex structures are rarely compatible with $\omega_A$. One can check that the Atiyah--Hitchin--Singer almost complex structure $J_+$ is compatible with $\omega_A$ if and only if $g_A$ is anti-self-dual Einstein with positive scalar curvature whilst the Eells--Salamon almost complex structure $J_-$ is compatible with $\omega_A$ if and only if $g_A$ is anti-self-dual Einstein with negative scalar curvature. (See \cite{Atiyah1978Self-duality-in} and \cite{Eells1983Constructions-t} for the definitions and properties of $J_+$ and $J_-$ respectively.)
\end{remark}

\subsection{Symplectic Fano manifolds}

The symplectic geometry of $(Z, \omega_A)$ depends dramatically on the sign of the definite connection. We begin with the following definition (which is not universally standard).

\begin{definition}\label{Fano_CY}
A symplectic manifold $(M, \omega)$ is called a \emph{symplectic Fano} if $[\omega]$ is a positive multiple of $c_1(M, \omega)$. It is called a \emph{symplectic Calabi--Yau} if $c_1(M,\omega) = 0$. 
\end{definition}

\begin{proposition}[\cite{Fine2009Symplectic-Cala}]
If $A$ is a positive definite connection then $(Z, \omega_A)$ is a symplectic Fano. If $A$ is negative definite then $(Z, \omega_A)$ is a symplectic Calabi--Yau.
\end{proposition}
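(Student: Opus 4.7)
My plan is to exhibit a specific almost complex structure $J$ on $Z$ that tames $\omega_A$ and to compute $c_1(TZ,J)$ directly using the splitting $TZ=V\oplus H$ coming from the horizontal distribution of $A$. I would take $J^V$ to be the natural complex line bundle structure on $V$, and $J^H$ at $z\in Z$ to be the almost complex structure on $T_{\pi(z)}X$ for which $\Phi_A(z)\in\Lambda^+_A$ is a positive multiple of the fundamental 2-form. Then set $J=(J^V,J^H)$ in the positive definite case and $J=(J^V,-J^H)$ in the negative definite case. To verify that $J$ tames $\omega_A$, I would carry out two short calculations: on the vertical part $\omega_A|_V$ is a positive multiple of the round $S^2$-area form, coming from the Gauss-Bonnet computation on the fibre; on the horizontal part the curvature identity $F_\nabla|_H=i\langle F_A,z\rangle$, combined with $F_A(z)=\pm\Phi_A(M^{1/2}(z))$ (sign matching that of $A$) and the vanishing of $(2,0)+(0,2)$-forms on pairs $(u,J^Hu)$, should give $\omega_A|_H(u,J^Hu)$ of sign equal to the sign of $A$, hence positive after the prescribed flip.

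The main step is to establish the identity $c_1(H,J^H)=[\omega_A]$, which I would approach tautologically. The pullback $\pi^*\Lambda^+_A$ splits orthogonally as $L\oplus L^\perp$, where $L_z=\R\,\Phi_A(z)$ is the tautological real line subbundle; since $L$ is trivialised by its tautological section, the complex line bundle $L^\perp$ with its $\SO(2)$-structure from the orientation of $\Lambda^+_A$ is literally equal to the vertical tangent bundle $V$ of $Z\to X$ (both fibres being $\Phi_A(z)^\perp$), so $c_1(L^\perp)=c_1(V)=[\omega_A]$. Simultaneously, the pointwise decomposition $\Lambda^+_A|_{\pi(z)}=\R\,\Phi_A(z)\oplus\re\bigl(K\oplus\bar K\bigr)$, with $K=\Lambda^{2,0}(T^*_{\pi(z)}X,J^H(z))$, identifies $L^\perp_z$ with a real rank-$2$ subspace carrying a complex line structure coming from $K$; a direct calculation in a quaternionic basis should show that this structure is the conjugate of the orientation-induced one, so that $L^\perp\cong\overline{\pi^*K}$ as complex line bundles. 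Hence $c_1(H,J^H)=c_1(\pi^*TX,J^H)=-c_1(\pi^*K)=c_1(L^\perp)=[\omega_A]$.

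Assembling the pieces I would obtain
\[
c_1(TZ,J)=c_1(V,J^V)+c_1(H,J^H)=\begin{cases}2[\omega_A]&\text{if $A$ is positive definite,}\\ 0&\text{if $A$ is negative definite,}\end{cases}
\]
which proves both cases of the proposition. The hard part will be the bookkeeping of signs and orientations -- identifying the conjugate $\SO(2)$-structure on $L^\perp$ and checking that the prescribed $J$ really tames $\omega_A$ globally, in particular handling any mixed vertical-horizontal terms in $\omega_A$; once these signs are fixed the topological assembly is automatic. As a consistency check, for $X=S^4$ with the round metric one has $Z=\C\P^3$ and $[\omega_A]=c_1(\O(2))$, and the formula $c_1(TZ,J)=2[\omega_A]=c_1(\O(4))$ recovers $c_1(\C\P^3)$.
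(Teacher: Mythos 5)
Your argument is correct and is essentially the proof of this proposition in the cited reference \cite{Fine2009Symplectic-Cala} (the present paper only quotes the result): one tames $\omega_A$ by the tautological almost complex structure (with the horizontal part flipped in the negative case), notes $[\omega_A]=c_1(V)$ by Chern--Weil, and identifies the orthogonal complement of the tautological line in $\pi^*\Lambda^+_A$ both with $V$ and with the conjugate of the fibrewise canonical bundle, giving $c_1(H,J^H)=c_1(V)$ and hence $c_1=2[\omega_A]$ or $0$. The sign points you flag do work out as you predict: the mixed vertical--horizontal terms of $\omega_A$ vanish for the connection-induced splitting, and the identification is with $\overline{K}$ rather than $K$ (checkable in the standard basis $\omega_1,\omega_2,\omega_3$ of $\Lambda^+\R^4$, where $\Lambda^{2,0}$ is spanned by $\omega_2+i\omega_3$), consistent with your $\C\P^3$ check.
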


Definition \ref{Fano_CY} is of course directly inspired by the similar terms in use in algebraic geometry. The motivation is that in algebraic geometry the study of Fanos and Calabi--Yaus has special features and one is curious to see to what extent these features extend to symplectic geometry. For example, it is known that in each dimension there is a finite number of deformation families of smooth algebraic Fano varieties (see, e.g., the discussion in \cite{Iskovskikh1977Fano-3-folds-I}). 

The next two results show the current knowledge on the extent to which symplectic and algebraic Fanos diverge:

\begin{theorem}[McDuff \cite{McDuff1990The-structure-o}]
Let $(M,\omega)$ be a symplectic Fano 4-manifold. Then there is a compatible complex structure on $M$ making it a smooth algebraic Fano variety.
\end{theorem}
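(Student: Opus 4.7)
My plan is to use pseudoholomorphic curve techniques to produce an embedded symplectic sphere of non-negative self-intersection in $M$, and then to invoke McDuff's earlier classification of symplectic 4-manifolds containing such a sphere, reducing matters to a del Pezzo surface.

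First I would fix an $\omega$-compatible almost complex structure $J$. The symplectic Fano condition gives $[\omega] = \lambda\, c_1(M,\omega)$ with $\lambda > 0$, hence $c_1 \cdot [\omega] > 0$; since $[\omega]^2 > 0$ we also have $b^+(M) \geq 1$, so $(M,\omega)$ falls within the scope of Gromov--Taubes techniques. I would then seek a class $A \in H_2(M;\Z)$ with $c_1 \cdot A > 0$ carrying a $J$-holomorphic sphere, constraining the corresponding moduli space by requiring the curves to pass through enough generic points to reduce the virtual dimension to zero. Non-emptiness of this count should follow from Gromov compactness combined with the positivity of $c_1 \cdot A$. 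Taking a limit as $J_n \to J$ and exploiting positivity of intersections for $J$-holomorphic curves in real dimension four, one extracts from the (possibly nodal) limit an embedded $J$-holomorphic sphere of non-negative self-intersection.

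With such a sphere $S$ in hand, McDuff's structure theorem identifies $(M,\omega)$ up to symplectomorphism as either $\C\P^2$, an $S^2$-bundle over a Riemann surface $\Sigma$, or a blow-up of one of these. Next I would use the Fano hypothesis $[\omega] = \lambda c_1$ to discard the cases that cannot carry an algebraic Fano complex structure: positivity of $c_1 \cdot [\omega]$ together with adjunction force $\Sigma$ to have genus zero, and severely restrict the number and position of the blow-up points, so that the underlying smooth manifold is diffeomorphic to a del Pezzo surface. Since on each such del Pezzo the cone of Fano K\"ahler classes contains all positive multiples of $c_1$, one can then match $\omega$ (which by assumption lies on this ray) with a genuine Fano K\"ahler form by standard symplectomorphism arguments, yielding the desired integrable $J$.

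The hard part will be the first step: extracting an embedded symplectic sphere of non-negative self-intersection purely from the cohomological Fano condition. This relies on delicate transversality for moduli spaces of $J$-holomorphic spheres, careful use of Gromov compactness, and the positivity of intersections peculiar to real dimension four; it is the genuine content of the theorem. Once such a sphere is produced, the remainder proceeds by structural classification using well-established results on rational and ruled symplectic 4-manifolds.
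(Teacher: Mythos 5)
The paper does not actually prove this statement: it is quoted as an external result with a citation to McDuff, so there is no internal argument to compare yours against. Judged on its own, your plan has the right overall skeleton for the standard modern proof: produce an embedded symplectic sphere of non-negative self-intersection, feed it into McDuff's classification of rational and ruled symplectic 4-manifolds, and then use $[\omega]=\lambda c_1$, $\lambda>0$, to force the base to have genus zero, to bound the number of blow-ups (via $c_1^2=\lambda^{-2}[\omega]^2>0$, so at most eight), and to match $\omega$ with an anticanonical K\"ahler class using the uniqueness results for symplectic forms on rational surfaces in a fixed cohomology class. One small correction to that second half: symplectically there is no constraint on the ``position'' of blow-up points; the constraints are purely numerical plus the cohomological matching.

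The genuine gap is your first step, and it is not just ``hard'': the mechanism you propose cannot work as stated. Gromov compactness only controls limits of pseudoholomorphic curves that are already known to exist; it does not conjure a $J$-holomorphic sphere out of the cohomological positivity $c_1\cdot[\omega]>0$, and on an abstract symplectic Fano 4-manifold there is no distinguished class $A$ with an a priori nonzero count of spheres, nor a special almost complex structure for which curves are known in advance (contrast Gromov's arguments for $\C\P^2$ or $S^2\times S^2$, which start from the standard integrable structure). The only known route to the required sphere --- equivalently, to showing the manifold is rational or ruled --- passes through Seiberg--Witten theory: since $K\cdot[\omega]=-\lambda^{-1}[\omega]^2<0$, Taubes' nonvanishing theorem forces $b^+=1$, and then Taubes' SW$\Rightarrow$Gr together with the wall-crossing results of Li--Liu and Liu's theorem that $K\cdot\omega<0$ implies rational or ruled (see also Ohta--Ono in the monotone case) produces the embedded spheres of non-negative square. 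Without some such input the existence step fails, so the plan as written does not amount to a proof; with it, the rest of your outline is essentially the accepted argument.
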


\begin{theorem}[Fine--Panov \cite{Fine2010Hyperbolic-geom}]
There are symplectic Fanos which are not algebraic, starting at least from dimension 12. 
\end{theorem}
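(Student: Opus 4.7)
The plan is to combine three ingredients drawn from the preceding sections: the gauge-theoretic passage from a hyperbolic 4-manifold to a six-dimensional symplectic Calabi--Yau, a selection of hyperbolic 4-manifold whose fundamental group fails to be a Kähler group, and a projective bundle construction that lifts the six-dimensional symplectic Calabi--Yau to a symplectic Fano of sufficiently high dimension while preserving $\pi_1$.

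First, let $X$ be a closed hyperbolic 4-manifold. Its Levi--Civita connection $A$ on $\Lambda^+$ is negative definite, so by the proposition of the previous subsection the unit sphere bundle $Z \to X$ of $\Lambda^+$ carries the symplectic form $\omega_A$ making $(Z, \omega_A)$ into a six-dimensional symplectic Calabi--Yau. The fibration $S^2 \to Z \to X$ yields $\pi_1(Z) \cong \pi_1(X)$. Next, one selects $X$ so that $\pi_1(X)$ is not a Kähler group. Concretely, any closed hyperbolic 4-manifold with odd first Betti number will do, since a Kähler manifold must have $b_1$ even by Hodge theory, and hyperbolicity of $X$ together with asphericity forces $b_1(\pi_1(X)) = b_1(X)$; such examples are produced by arithmetic constructions (for instance, variants of Ratcliffe--Tschantz-type manifolds), and in principle one could also invoke the Gromov and Delzant--Gromov obstructions for hyperbolic groups to be Kähler. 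For such an $X$, $Z$ admits no Kähler structure whatsoever and in particular is not algebraic.

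Finally, to convert a symplectic Calabi--Yau into a symplectic Fano without disturbing the $\pi_1$-obstruction, one forms a projective bundle $M = \P(E) \to Z$ for a sufficiently positive Hermitian bundle $E$ of rank $r+1$. Via the minimal coupling construction, $M$ acquires a symplectic form $\Omega$ whose class has the shape $[\Omega] = h + \kappa\, \pi^*[\omega_Z]$, where $h$ is the fibrewise hyperplane class. Since $c_1(Z) = 0$, one computes $c_1(M, \Omega) = (r+1)h + \pi^* c_1(E)$, and by choosing the curvature of $E$ and the coupling constant $\kappa$ appropriately one can arrange $[\Omega]$ to be a positive multiple of $c_1(M,\Omega)$, making $M$ a symplectic Fano. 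The obligatory contribution from the non-trivial class $[\omega_Z]$ forces $r \geq 3$, so $\dim M = 6 + 2r \geq 12$. Because $\P^r$ is simply connected, $\pi_1(M) = \pi_1(Z) = \pi_1(X)$ is still not a Kähler group, so $M$ is not Kähler and hence not algebraic. The main obstacle is the second step: exhibiting a closed hyperbolic 4-manifold whose fundamental group fails the Kähler test, since the easy invariants $\chi$ and $b_1$ parity are not universally decisive and one must rely on specific arithmetic examples or on deeper group-theoretic obstructions. The remaining steps are comparatively formal, and the dimension bound $\dim M \geq 12$ is the cost of absorbing the non-trivial symplectic class of the Calabi--Yau factor into the Chern class of the Fano.
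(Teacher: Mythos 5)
Your reduction to a $\pi_1$ obstruction is the right idea, but the step you call ``comparatively formal''---turning the symplectic Calabi--Yau $Z$ into a symplectic Fano by projectivising a bundle---is exactly where the argument breaks down, and it is not a formality. The weak/minimal coupling construction produces symplectic forms on $\P(E)\to Z$ only in classes $h+\kappa\,\pi^*[\omega_Z]$ with $\kappa$ \emph{large compared with the curvature of the chosen connection on $E$}, whereas the Fano condition $[\Omega]=\lambda c_1(M)$ with $c_1(Z)=0$ pins down $\lambda=1/(r+1)$ and $c_1(E)=(r+1)\kappa[\omega_Z]$. So to make $\kappa$ large you must make $c_1(E)$ large, which makes the curvature of $E$ (and hence the nondegeneracy threshold for $\kappa$) large in the same proportion: the two requirements chase each other, and you give no argument that they can be reconciled. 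Your claim that the contribution of $[\omega_Z]$ ``forces $r\geq 3$'' is asserted without any computation; nothing in your accounting rules out $r=1$, and if your scheme worked as stated it would already produce non-algebraic symplectic Fanos in dimension $8$---contradicting the status recorded in Question~\ref{Fano_question}, which says $12$ is the lowest dimension in which such examples are known. (A side remark: the step you flag as the main obstacle is in fact harmless---by Carlson--Toledo, no cocompact lattice in $\SO(n,1)$, $n\geq 3$, is a K\"ahler group, so no search for hyperbolic $4$-manifolds with odd $b_1$ is needed.)

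The construction in the cited paper of Fine--Panov is different and avoids any fibrewise ``Fano-ification'': one takes $Z$ to be the twistor space of a \emph{closed hyperbolic $6$-manifold} $M$, i.e.\ the bundle of compatible almost complex structures with fibre $\SO(6)/\U(3)\cong\C\P^3$, so $\dim Z=12$ on the nose. The hyperbolic metric induces a natural symplectic form on $Z$ and a direct curvature computation shows $[\omega]$ is a \emph{positive} multiple of $c_1(Z,\omega)$ (in contrast to the $4$-dimensional base case, where $c_1=0$, which is why your $Z$ is Calabi--Yau rather than Fano). Since the fibre is simply connected, $\pi_1(Z)\cong\pi_1(M)$ is a cocompact lattice in $\SO(6,1)$, hence not a K\"ahler group, so $Z$ carries no K\"ahler structure and in particular is not algebraic. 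If you want to salvage your approach you would need a genuinely new mechanism for producing symplectic forms on $\P(E)\to Z$ in the anticanonical class over a non-K\"ahler base; as it stands that is an open problem, not a routine coupling argument.
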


This leads to the following question (to the best of our knowledge, this first appeared in the literature in \cite{Fine2010Hyperbolic-geom}, but had surely been discussed before):

\begin{question}\label{Fano_question}
What is the lowest dimension $2n$ in which all symplectic Fanos are necessarily algebraic? (We see from the above that $2\leq n < 6$.)
\end{question}

\subsection{A gauge theoretic ``sphere'' conjecture for positive definite connections}\label{positive_conjetcure_section}

Since positive definite connections give rise to symplectic Fano 6-manifolds they are useful for probing the $n=3$ case of Question \ref{Fano_question}. In this regard we have the following conjecture, which first appeared in \cite{Fine2009Symplectic-Cala}.

\begin{conjecture}\label{positive_definite_conjecture}
Let $A$ be a positive definite connection over a compact 4-manifold. Then the underlying 4-manifold is either $S^4$ or $\overline{\C\P}^2$ whilst the resulting symplectic Fano is algebraic, being symplectomorphic to the standard structure on either $\C\P^3$ or the complete flag $F(\C^3)$ respectively.
\end{conjecture}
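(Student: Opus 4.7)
The strategy splits the conjecture into a topological step (identifying $X$) and a symplectic-uniqueness step, reducing both to the classification of positive Einstein metrics with $\frac{\Lambda}{3}+W^+>0$ established in Theorem \ref{Einstein_positive_definite}.

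The first observation I would make is that, for any connection $A$, the cohomology class $[\omega_A] \in H^2(Z;\R)$ is independent of $A$: by construction $\omega_A$ is, up to a fixed constant, the curvature of a unitary connection in the vertical line bundle $V \to Z$, so $[\omega_A] = -2c_1(V)$ is topological. Along any smooth path $A_t$ of positive definite connections on a fixed $\SO(3)$-bundle $E \to X$ the class $[\omega_{A_t}]$ is therefore constant, and Moser's theorem produces an ambient isotopy $\psi_t$ of $Z$ with $\psi_t^*\omega_{A_t} = \omega_{A_0}$. Consequently the symplectomorphism type of $(Z,\omega_A)$ depends only on the path-component of $A$ in the open cone of positive definite connections on $E$.

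Next I would try to identify $X$ by running the gradient flow of $S$, whose short-time existence is Theorem \ref{short_time_gf}. Because $S$ is sandwiched between $\tfrac{1}{3}(2\chi+3\tau)$ and $2\chi+3\tau$, any critical value is a priori controlled. Assuming one can establish long-time existence and convergence of the flow (after gauge fixing) to a critical connection, Theorem \ref{EL_equations} provides an Einstein metric with $\frac{\Lambda}{3}+W^+>0$, which by Theorem \ref{Einstein_positive_definite} must be the round $S^4$ or Fubini--Study $\overline{\C\P}^2$. This pins down $X$, and the topological identity $p_1(E)=2\chi(X)+3\tau(X)$ used in the bounds on $S$ then forces $E=\Lambda^+$. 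The flow trajectory itself joins $A$ to the symmetric Levi-Civita connection through positive definite connections, so the first paragraph furnishes a symplectomorphism $(Z,\omega_A) \cong (Z,\omega_{\mathrm{sym}})$. For $S^4$ the twistor space of the round metric is $\C\P^3$ with the Fubini--Study form, and for $\overline{\C\P}^2$ it is $F(\C^3)$ with its standard invariant symplectic form, giving the conjectured standard models.

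The decisive obstacle is the analytic step above: establishing long-time existence and convergence of the gradient flow of $S$. Three features make this substantially harder than the classical Yang--Mills flow. First, the set of positive definite connections is open but non-complete, and an a priori bound on $S$ does not prevent a flow line from degenerating to the boundary $\det M_A \to 0$; excluding this would require a new estimate peculiar to the functional. Second, by Theorem \ref{Hessian_elliptic} the Hessian of $S$ has only finitely many positive eigenvalues beside infinitely many negative ones, so critical points have finite Morse index and a naive ascending or descending flow need not converge---some min-max scheme is likely needed to locate the symmetric critical point. Third, the $\SO(3)$ gauge group is much smaller than $\Diff$, so one must separately control potential bubbling of $F_A$. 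A realistic first target is a local rigidity statement near the symmetric connection via an implicit function argument using ellipticity of the Hessian; global rigidity as stated in the conjecture appears to require fundamentally new ideas.
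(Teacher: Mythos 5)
The statement you are asked about is a \emph{conjecture}: the paper offers no proof of it, and your proposal does not supply one either. What you have written is essentially the paper's own proposed line of attack, made explicit. The reduction you perform --- deform $A$ through positive definite connections to a critical point of $S$, invoke Theorem \ref{Einstein_positive_definite} to identify the limiting metric as the round $S^4$ or standard $\overline{\C\P}^2$, and then use constancy of $[\omega_{A_t}]$ (it is a fixed multiple of $c_1(V)$, not $-2c_1(V)$, but this is immaterial) together with Moser stability to transport the symplectomorphism type along the path --- is precisely how the paper reduces Conjecture \ref{positive_definite_conjecture} to Conjecture \ref{deformation_positive_definite_conjecture}. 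So the step you label ``assuming one can establish long-time existence and convergence of the flow'' is not a technical lacuna in an otherwise complete argument: it \emph{is} the conjecture, and everything the paper actually proves (ellipticity of the Hessian modulo gauge in Theorem \ref{Hessian_elliptic}, short-time existence in Theorem \ref{short_time_gf}, and the critical-point classification in Theorem \ref{Einstein_positive_definite}) is groundwork for that open step, not a substitute for it.

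Your diagnosis of where the difficulty lies is accurate and matches the paper's framing: possible degeneration to the boundary of the space of definite connections ($\det M_A \to 0$), possible concentration of curvature, and the fact that convergence of a gauge-fixed flow to a critical point is not forced by the a~priori bounds on $S$. Two small corrections: since the Hessian has only \emph{finitely many positive} eigenvalues, critical points have finite co-index for the maximisation problem (the functional is being maximised, with ASD Einstein metrics realising the topological maximum $2\chi+3\tau$), so the natural scheme is a maximising/ascending one rather than a min-max over infinitely many directions; and the identification $E \cong \Lambda^+$ already follows from definiteness of $A$ via the isomorphism $F_A \colon E \to \Lambda^+_A$, not from the flow limit. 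In summary: correct strategy, same as the paper's, but the conjecture remains open because the deformation step has not been established.
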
 

An initial motivation for this conjecture is the stark contrast between the known examples of positive and negative definite connections. The only known compact positive definite connections are isotopic to the Levi-Civita connections on $\Lambda^+ \to S^4$ or $\Lambda^+ \to \overline{\C\P}^2$. The resulting symplectic 6-manifolds are then $\C\P^3$ or $F(\C^3)$. On the other hand, in the many known examples of negative definite connections over compact 4-manifolds, the resulting symplectic 6-manifold never admits a compatible complex structure. See, for example, the discussions in \cite{Fine2009Symplectic-Cala,Fine2010Hyperbolic-geom,Fine2013The-diversity-o}.

Leaving aside Question \ref{Fano_question}, one can view Conjecture \ref{positive_definite_conjecture} as a gauge theoretic ``sphere'' conjecture. Sphere theorems in Riemannain geometry start with a compact Riemannian manifold whose curvature satisfies a certain inequality and deduce that the underlying manifold is diffeomorphic to a sphere, or spherical space form. There is a long history of such theorems. For a survey of classical results and recent developments, see the article of Wilking \cite{Wilking2007Nonnegatively-a}. A proof of Conjecture \ref{positive_definite_conjecture} would be, to the best of our knowledge, the first example of such a theorem with purely gauge theoretic hypotheses, involving the curvature of an auxialliary connection rather than a Riemannian metric. It would also imply a more traditional sphere-type theorem. To see this we first need the following result from \cite{Fine2009Symplectic-Cala}:

\begin{theorem}[Fine--Panov \cite{Fine2009Symplectic-Cala}]\label{Riemannian_definite}
Let $g$ be a Riemannian metric on an oriented 4-manifold $X$. Write $\Ric_0$ for the trace free Ricci curvature of $g$, interpreted as a map $\Lambda^+ \to \Lambda^-$. The Levi-Civita connection on $\Lambda^+$ is definite if
\begin{equation}\label{Riemannian_ineq}
\left(\frac{s}{12} + W^+\right)^2 > \Ric_0^*\Ric_0
\end{equation}
In this case the sign of the connection agrees with that of $\det \left(s/12+W^+\right)$.
\end{theorem}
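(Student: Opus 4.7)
The plan is to read off the Singer--Thorpe decomposition of the Riemann curvature operator to produce an explicit formula for the matrix $M_{ij}\nu_g = F_i\wedge F_j$, and then observe that positive definiteness of this matrix coincides with the stated inequality. With respect to the splitting $\Lambda^2 = \Lambda^+\oplus\Lambda^-$, the Riemann curvature operator has block form
\[
\mathcal{R} = \begin{pmatrix} \tfrac{s}{12}I + W^+ & \Ric_0^*\\ \Ric_0 & \tfrac{s}{12}I + W^-\end{pmatrix}.
\]
Under the identification $\so(\Lambda^+)\cong\Lambda^+$, the curvature of the Levi--Civita connection $\nabla$ on $\Lambda^+$ corresponds to $\mathcal{R}|_{\Lambda^+}$: writing $P := \tfrac{s}{12}I + W^+$ and choosing an oriented orthonormal frame $\omega_1,\omega_2,\omega_3$ of $\Lambda^+$, the expansion $F_\nabla = \sum_i F_i\otimes\omega_i$ gives $F_i = P\omega_i + \Ric_0(\omega_i)$, with the first summand in $\Lambda^+$ and the second in $\Lambda^-$.

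First I would compute $F_i\wedge F_j$ using the pairings $\omega_a\wedge\omega_b = \delta_{ab}\nu_g$, $\eta_k\wedge\eta_l = -\delta_{kl}\nu_g$ (the minus coming from anti-self-duality), together with $\omega\wedge\eta = 0$. The cross terms drop out, and because $P$ is symmetric a short calculation yields, up to an overall positive constant,
\[
M = P^2 - \Ric_0^*\Ric_0 = \left(\tfrac{s}{12}I + W^+\right)^2 - \Ric_0^*\Ric_0.
\]
Since definiteness of the connection is equivalent to definiteness of the matrix $M$, the hypothesis $(s/12 + W^+)^2 > \Ric_0^*\Ric_0$ is exactly the condition that $M$ be positive definite, and $\nabla$ is therefore a definite connection.

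For the sign, the idea is to deform away the off-diagonal Ricci term while preserving definiteness. The set of (pointwise) definite elements of $\Lambda^2\otimes\so(E)$ is open, with its positive- and negative-definite parts in separate connected components, so the sign is locally constant along any continuous family of definite 2-forms. Consider the path $F_i^t := P\omega_i + t\,\Ric_0(\omega_i)$ for $t\in[0,1]$: the associated matrix $M^t = P^2 - t^2\,\Ric_0^*\Ric_0$ remains positive definite throughout, since $P^2 > \Ric_0^*\Ric_0 \ge t^2\,\Ric_0^*\Ric_0$. At $t=0$ one has $F_i^0 = P\omega_i \in \Lambda^+$, so $\Lambda^+_A$ coincides with $\Lambda^+$ equipped with its intrinsic orientation, and the basis $(P\omega_i)$ is oriented if and only if $\det P > 0$. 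Hence the sign of $\nabla$ equals $\mathrm{sgn}\,\det\bigl(\tfrac{s}{12}I + W^+\bigr)$, as claimed.

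The main technical hurdle is the bookkeeping in the first step: correctly translating the identifications $\so(E)\cong E$ and $\so(\Lambda^+)\cong\Lambda^+$ together with the Singer--Thorpe block form into the explicit formula $F_i = P\omega_i + \Ric_0(\omega_i)$, with all sign conventions aligned. Once this is pinned down, both the definiteness assertion and the sign assertion reduce to elementary linear algebra combined with the soft continuity argument above.
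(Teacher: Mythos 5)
Your proof is correct, and it is essentially the standard argument from the cited reference \cite{Fine2009Symplectic-Cala} (the paper itself only quotes the result): read off the curvature of the connection on $\Lambda^+$ from the Singer--Thorpe block form as $F_i = \bigl(\tfrac{s}{12}+W^+\bigr)\omega_i + \Ric_0(\omega_i)$, compute $M_{ij}\,\nu = F_i\wedge F_j = c\bigl(\bigl(\tfrac{s}{12}+W^+\bigr)^2 - \Ric_0^*\Ric_0\bigr)_{ij}\nu$ with $c>0$ using that $\Lambda^+\wedge\Lambda^-=0$, and determine the sign by deforming away the off-diagonal term. The one point that genuinely needs pinning down is the overall sign in $F_i = P\omega_i + \Ric_0(\omega_i)$ (it affects the sign claim, though not definiteness), which you flag yourself; it is fixed by checking consistency with equation (\ref{M_Riemannian}) and the fact that the round $S^4$ yields a \emph{positive} definite connection.
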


With this result in hand, we see that the following purely Riemannian conjecture is implied by the more general Conjecture \ref{positive_definite_conjecture}: 

\begin{conjecture}\label{metric_positive_definite_conjecture}
$S^4$ and $\overline{\C\P}^2$ are the only compact 4-manifolds which admit Riemannian metrics satisfying the curvature inequality (\ref{Riemannian_ineq}) and with $s/12+W^+$ a positive definite endomorphism of $\Lambda^+$.
\end{conjecture}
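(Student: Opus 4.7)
The strategy is to combine Theorem~\ref{Riemannian_definite} with Weitzenböck estimates and topological classification, using the gauge-theoretic machinery as a bridge to known sphere-theorem techniques. By Theorem~\ref{Riemannian_definite}, the hypotheses of the conjecture promote the Levi-Civita connection $A$ on $\Lambda^+$ to a positive definite $\SO(3)$-connection; the topological bound on $S(A)$ then forces $2\chi(X)+3\tau(X)>0$, and tracing the pointwise positivity of $\tfrac{s}{12}+W^+$ gives $s>0$.

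The heart of the argument would be to bound the self-dual Betti number by $b^+(X)\le 1$. For any self-dual harmonic 2-form $\omega$, the standard Weitzenböck formula gives
\[
\int_X\!\left(|\nabla\omega|^2+\tfrac{s}{3}|\omega|^2-2\langle W^+\omega,\omega\rangle\right)\dvol=0.
\]
The hypothesis controls $W^+$ from below ($W^+>-\tfrac{s}{12}I$) but not directly from above, so one would have to extract an effective upper bound by combining the pointwise operator inequality $(\tfrac{s}{12}+W^+)^2>\Ric_0^*\Ric_0$ with the curvature expressions for $2\chi+3\tau$ from Gauss-Bonnet and signature, and with the upper half of the topological bound for $S(A)$. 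The objective would be to force any non-trivial self-dual harmonic form to be parallel and a top eigenform of $W^+$, so that standard rigidity arguments give $b^+\in\{0,1\}$.

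With $s>0$ and $b^+\le 1$ established, the final step is topological classification. If $b^+=0$, Donaldson's diagonalisation theorem constrains the intersection form to be $k\langle-1\rangle$; the bound $2\chi+3\tau>0$ together with positive scalar curvature and Seiberg-Witten-type obstructions should eliminate $k>0$, leaving $X\cong S^4$. If $b^+=1$, a parallel analysis using the topological bound and the rigidity step of the Weitzenböck argument should identify $X$ with $\overline{\C\P}^2$; in a favourable scenario one would show that equality in the upper topological bound is forced, so that $M_A$ is a multiple of the identity and $g_A$ is an anti-self-dual Einstein metric of positive scalar curvature, at which point Hitchin's classification concludes.

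The principal obstacle is the Weitzenböck step: the pointwise operator inequality bounding $\Ric_0^*\Ric_0$ is a very weak substitute for a uniform upper bound on $W^+$, and it is unclear how to extract the latter without further structural input. An alternative would be to attack the stronger gauge-theoretic Conjecture~\ref{positive_definite_conjecture} directly, either symplectically via the associated symplectic Fano $(Z,\omega_A)$ or analytically via the gradient flow of $S$; if the flow converges, the limit is a critical point with $M_A$ scalar, hence anti-self-dual Einstein with $s>0$, so Hitchin's theorem identifies the underlying manifold as $S^4$ or $\overline{\C\P}^2$. Establishing long-time existence, convergence, and topological invariance of the flow would itself be a substantial programme --- which is why the conjecture remains open.
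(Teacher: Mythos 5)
First, a point of framing: the statement you were asked to prove is stated in the paper as a \emph{conjecture}, and the paper offers no proof of it. What the paper provides is (i) the observation that it follows from the stronger gauge-theoretic Conjecture~\ref{positive_definite_conjecture} via Theorem~\ref{Riemannian_definite}, (ii) the special case in which the definite connection is additionally a critical point of $S$ (Theorem~\ref{Einstein_positive_definite}), proved by combining Gursky's inequality (\ref{Gursky_ineq}) with Hitchin's classification of anti-self-dual Einstein metrics of positive scalar curvature, and (iii) a proposed route via deformation to a critical point (Conjecture~\ref{deformation_positive_definite_conjecture}), for which \S\ref{CV_ok} supplies only the ellipticity of the Hessian and short-time existence of the flow. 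Your closing paragraph --- deform or flow to a critical point, then apply Gursky plus Hitchin --- is exactly the paper's suggested strategy, and you are right that it is not a proof: long-time existence and convergence are completely open. To that extent your proposal is honest and correctly aligned with the paper.

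The part of your sketch that is genuinely yours, the Weitzenb\"ock/classification route, has concrete problems beyond the one you flag. The hypotheses give $s>0$ and a \emph{lower} bound $W^+>-\tfrac{s}{12}$, but positive scalar curvature alone places no bound on $b^+$ (simply connected $4$-manifolds with metrics of positive scalar curvature and arbitrarily large $b^+$ abound), and nothing in (\ref{Riemannian_ineq}) is known to control $W^+$ from above, so the Weitzenb\"ock identity cannot be closed as stated; indeed the paper stresses that $2\chi+3\tau>0$ is the \emph{only} known obstruction to the existence of a definite connection. Moreover your case split is misaligned with the conjectured models: the orientation in which $\tfrac{s}{12}+W^+$ is positive definite is the non-complex one, so $\overline{\C\P}^2$ has $b^+=0$ and intersection form $\langle-1\rangle$. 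Thus in your $b^+=0$ branch, ``eliminating $k>0$ in Donaldson's diagonalisation'' would wrongly exclude $\overline{\C\P}^2$ itself, while the connected sums $\#k\,\overline{\C\P}^2$ all carry positive scalar curvature metrics, so scalar curvature and Seiberg--Witten-type arguments (which in any case require $b^+\geq 1$ for the usual invariants) cannot rule out $k\geq 2$; only the full definite-connection hypothesis could, and no mechanism for that is currently known. So the Weitzenb\"ock/Donaldson branch of your programme would fail as written, and the viable branch is the one the paper itself leaves as Conjecture~\ref{deformation_positive_definite_conjecture}.
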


A small step in the direction of Conjecture \ref{metric_positive_definite_conjecture} was made in \cite{Fine2009Symplectic-Cala}, where the following result appears:

\begin{theorem} [Fine--Panov \cite{Fine2009Symplectic-Cala}].
Let $(M,g)$ be a compact oriented Riemannian 4-manifold which satisfies the curvature inequality (\ref{Riemannian_ineq}) and with $s/12+W^+$ a positive definite endomorphism of $\Lambda^+$. Then $M$ is homeomorphic to the connected sum of $n$ copies of $\overline{\C\P}^2$ for some $n=0,1,2,3$.
\end{theorem}

More evidence for Conjecture \ref{metric_positive_definite_conjecture} is provided by the following result.

\begin{theorem}\label{no_self_dual_counterexamples}
Let $(M,g)$ be a compact oriented Riemannian 4-manifold which satisfies the curvature inequality (\ref{Riemannian_ineq}). If $g$ has positive scalar curvarture and is anti-self-dual then it is conformal to the standard metric on either $S^4$ or $\overline{\C\P}^2$.
\end{theorem}

The proof is based on the following result of Verbitsky. We also give here a shorter variation of Verbitsky's proof. (The result we prove here is Theorem~1.1 of \cite{Verbitsky2012Rational-curves}; see Definition~1.2 of \cite{Verbitsky2012Rational-curves} to pass from the statement there to the language of taming forms.)

\begin{theorem}[Verbitsky \cite{Verbitsky2012Rational-curves}]
Let $(M,g)$ be a compact oriented 4-manifold with anti-self-dual Riemannian metric. If the twistor space $Z$ admits a taming symplectic form then the metric is conformal to the standard metric on either $S^4$ or $\overline{\C\P}^2$.
\end{theorem}
\begin{proof}
We begin by showing that $Z$ is Moishezon, using a result of Campana (Theorem 4.5 of \cite{Campana1991On-twistor-spac}). Let $C$ denote the space of analytic cycles of dimension~1 in $Z$ and write $C_t$ for the irreducible 4-dimensional component which contains the vertical twistor lines. Campana's Theorem says that if $C_t$ is compact then $Z$ is Moishezon. In our case, the complex structure is tamed by a symplectic form and so compactness of $C_t$ follows by Gromov's compactness result. 

Next, we note that if $\omega$ is a taming symplectic form then so is $\omega - \gamma^*\omega$, where $\gamma \colon Z \to Z$ is the real involution on the twistor space. This second form is in a multiple of the anti-canonical class. (This follows from the form of the cohomology of $Z$ which is given by Leray--Hirsch, and is described in, for example, \cite{Hitchin1981Kahlerian-twist}.) So we can assume the taming form represents $c_1(K^{-1})$. As a consequence, for any $k$-dimensional subvariety $V$ of $Z$, $\int_V c_1(K^{-1})^k>0$. We can now apply the Nakai--Moishezon criterion, which holds for Moishezon manifolds (Theorem~6 of \cite{Moishezon1966On-n-dimensiona}, see also Theorem~3.11 of \cite{Kollar1990Projectivity-of}). This tells us that $K^{-1}$ is ample and so $Z$ is a projective manifold. Finally, a theorem of Hitchin \cite{Hitchin1981Kahlerian-twist} guarantees that the standard conformal structures on $S^4$ and $\overline{\C\P}^2$ are the only ones which yield compact Kähler twistor spaces. 
\end{proof}

\begin{proof}[Proof of Theorem \ref{no_self_dual_counterexamples}.]
The inequality (\ref{Riemannian_ineq}) implies that there is a symplectic form on the twistor space. Moreover, it follows from Theorem 4.4 of \cite{Fine2009Symplectic-Cala} that this symplectic form tames the twistor complex structure. The result now follows from Verbitsky's Theorem.
\end{proof}

As further motivation for both believing Conjecture \ref{positive_definite_conjecture} and perhaps seeing how to attack it, we now show it holds for those positive definite connections which are also critical points of the volume function. The following argument was explained to us by Claude LeBrun.

\begin{theorem}\label{Einstein_positive_definite}
Let $A$ be a positive definite connection on a compact 4-manifold, solving the equation $\diff_A \Phi_A = 0$ ensuring that $g_A$ is Einstein. Then $g_A$ is also anti-self-dual. It follows that $g_A$ is the standard metric on either $S^4$ or $\overline{\C\P}^2$ and the corresponding Fano is symplectomorphic to $\C\P^3$ or $F(\C^3)$ respectively.
\end{theorem}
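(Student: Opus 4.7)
The plan is to first upgrade the critical-point hypothesis to the anti-self-duality of the Einstein metric $g_A$, then apply Hitchin's classification, and finally match up the symplectic structures on twistor spaces.

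Since $A$ is a positive definite critical point of $S$, Theorem~\ref{EL_equations} gives $\diff_A\Phi_A=0$, so $g_A$ is Einstein with $\Ric(g_A)=\Lambda g_A$, $\Lambda>0$, and with $\tfrac{\Lambda}{3}+W^+$ strictly positive definite as an endomorphism of $\Lambda^+$; in particular $s_{g_A}=4\Lambda>0$. The theorem is thus reduced to the purely Riemannian rigidity statement that under this strict positive definiteness, $W^+$ must vanish identically.

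Following LeBrun's suggestion, the key is to combine the strict lower bound $\tfrac{1}{3}(2\chi+3\tau)<S(A)$ from the previous Proposition with Gursky's inequality for compact oriented 4-manifolds of positive Yamabe type with $b^+(X)\geq 1$, namely $\int_X|W^+|^2\dvol\geq\tfrac{2\pi^2}{3}(2\chi+3\tau)$ with equality if and only if $g_A$ is conformally K\"ahler--Einstein. Substituting the Chern--Gauss--Bonnet identity for an Einstein metric,
\[
2\chi(X)+3\tau(X)=\tfrac{1}{2\pi^2}\int_X|W^+|^2\dvol+\tfrac{\Lambda^2\vol(g_A)}{6\pi^2},
\]
into both statements, the topological lower bound translates to the strict inequality $\int_X|W^+|^2\dvol<\tfrac{\Lambda^2\vol(g_A)}{6}$, while Gursky's bound translates to $\int_X|W^+|^2\dvol\geq\tfrac{\Lambda^2\vol(g_A)}{6}$. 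These are contradictory unless Gursky's equality case applies. But equality there forces $g_A$ to be K\"ahler--Einstein up to scale, and for any K\"ahler--Einstein metric of positive scalar curvature $W^+$ has eigenvalues $(-\tfrac{s}{12},-\tfrac{s}{12},\tfrac{s}{6})$, so $\tfrac{s}{12}+W^+$ has two vanishing eigenvalues and is only semi-definite, contradicting strict positive definiteness. Hence $b^+(X)=0$.

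The remaining case $b^+(X)=0$ is where I expect the main technical obstacle to lie: Gursky's inequality is no longer available, and a separate rigidity argument is needed. One natural approach combines the Hitchin--Thorpe inequality (which for $b^+=0$ and simply connected forces $b^-\leq 4$) with the pointwise bound $|W^+|^2<\tfrac{2\Lambda^2}{3}$ from positive definiteness, together with either a topological exclusion of $b^+=0$ simply connected 4-manifolds with $b^-\geq 2$ admitting suitable Einstein metrics, or a Weitzenb\"ock/Bochner argument in the spirit of Derdzi\'nski's rigidity theorem, to conclude $W^+\equiv 0$. Granting this, $g_A$ is anti-self-dual Einstein with positive scalar curvature, and Hitchin's classification \cite{Hitchin1974Compact-four-di} identifies it, up to constant rescaling, with the round metric on $S^4$ or the Fubini--Study metric on $\overline{\C\P}^2$. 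Finally, by the remark following Lemma~\ref{symplectic_definite}, the Atiyah--Hitchin--Singer almost complex structure $J_+$ on $Z$ is then compatible with $\omega_A$, so $(Z,\omega_A,J_+)$ is a compact K\"ahler manifold; Penrose's twistor construction identifies $(Z,J_+)$ with $\C\P^3$ in the $S^4$ case and with the complete flag variety $F(\C^3)=\SU(3)/T^2$ in the $\overline{\C\P}^2$ case, and uniqueness of K\"ahler forms (up to symplectomorphism) within a fixed K\"ahler class on these rational homogeneous spaces identifies $\omega_A$ with the standard symplectic structure.
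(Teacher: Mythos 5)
Your overall strategy (reduce to showing $W^+\equiv 0$, then quote Hitchin's classification of compact anti-self-dual Einstein metrics with $s>0$ and identify the twistor spaces and their symplectic forms) is the same as the paper's, and those outer steps are fine, apart from the fact that the classification you want is Hitchin's ``K\"ahlerian twistor spaces'' theorem rather than the Hitchin--Thorpe paper you cite. The problem is in the core step. You invoke Gursky's inequality in its \emph{topological} form, for metrics of positive Yamabe type with $b^+(X)\geq 1$, and this forces you into a case split whose second half you cannot close: you explicitly write ``Granting this'' for the $b^+=0$ case. That case is not a residual technicality --- it is the entire theorem, since both model manifolds have $b^+=0$ (for $S^4$ trivially, and for $\overline{\C\P}^2$ because reversing the orientation of $\C\P^2$ swaps $b^+$ and $b^-$). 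So your argument, as it stands, only shows $b^+(X)=0$ and then defers precisely the rigidity statement that has to be proved.

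The paper avoids this entirely by using Gursky's estimate in its Einstein-specific form: for a compact Einstein 4-manifold with $\Ric=\Lambda g$, $\Lambda>0$, and $W^+$ not identically zero, one has $\int_X|W^+|^2\,\dvol\geq \frac{2\Lambda^2}{3}\vol(g)$ (equivalently $\frac{s^2}{24}\vol$), with \emph{no} hypothesis on $b^+$; this is the statement in Gursky's paper on $\delta W^+=0$ and in the streamlined Gursky--LeBrun argument the authors cite. Against this one plays the pointwise consequence of strict positive definiteness of $\frac{\Lambda}{3}+W^+$: writing $w_1\leq w_2\leq w_3$ for the eigenvalues of $W^+$, tracelessness gives $|w_2|\leq|w_1|$ and $w_3\leq -2w_1$, hence $|W^+|^2\leq 6w_1^2$, while $w_1>-\Lambda/3$ gives $6w_1^2<\frac{2\Lambda^2}{3}$; integrating contradicts Gursky's bound directly, with no Gauss--Bonnet substitution, no appeal to the topological action bound, and no case analysis. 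Note that you mention exactly this pointwise bound, but only as an auxiliary ingredient in your unfinished $b^+=0$ discussion --- used against the correct version of Gursky's inequality it already finishes the proof. (Your handling of the equality case of the topological Gursky inequality via the K\"ahler--Einstein eigenvalues $(\frac{s}{6},-\frac{s}{12},-\frac{s}{12})$ is correct as far as it goes, but it becomes unnecessary, as do the exact numerical constants in your Gauss--Bonnet manipulation, where your stated Gursky constant is in any case off by a factor of two relative to the norm convention of your own Gauss--Bonnet formula.)
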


\begin{proof}
Since $A$ is positive definite, the scalar curvature $s=4 \Lambda$ of $g_A$ is positive. If $g_A$ is also anti-self-dual then a theorem of Hitchin \cite{Hitchin1981Kahlerian-twist} ensures that the only possibilities are the standard metrics on $S^4$ and $\overline{\C\P}^2$. The twistor spaces of these 4-manifolds are $\C\P^3$ and $F(\C^3)$ respectively and it is a simple matter to check that the symplectic form $\omega_A$ is the standard Kähler form on these spaces (e.g., for symmetry reasons).

Assume then for a contradiction that $W^+\neq 0$. The key is the following inequality due to Gursky (this first appeared in \cite{Gursky2000Four-manifolds-}, see also the streamlined proof in \cite{Gursky1999On-Einstein-man}). Let $g$ be an Einstein metric with $\Ric=  \Lambda g$, where $\Lambda >0$, on a compact 4-manifold $X$. If $W^+$ is \emph{not} identically zero then
\begin{equation}\label{Gursky_ineq}
\int |W^+|^2 \dvol \geq \frac{2\Lambda^2}{3} \vol(g).
\end{equation}
To exploit this, write $w_1 \leq w_2 \leq w_3$ for the eigenvalues of $W^+$. Now  $w_1+w_2+w_3 =0$ implies $0 \leq w_3  \leq -2w_1$ and $w_1 \leq w_2 \leq -\frac{1}{2}w_1$. Because $w_1<0$ this last two-sided bound implies $|w_2|\leq |w_1|$. Hence $|W^+|^2 = w_1^2 +w^2_2 + w^2_3 \leq 6 w_1^2$. Meanwhile, since $\Lambda/3+W^+$ is positive definite, we see that $w_1^2 < \Lambda^2/9$. Integrating the bounds $|W^+|^2 \leq 6w_1^2 < \frac{2\Lambda^2}{3}$ and applying Gursky's inequality (\ref{Gursky_ineq}) we obtain a contradiction.
\end{proof}

\begin{remark} 
We remark that this result also follows from recent work of Richard and Seshadri \cite{Richard2013Positive-isotro}. They study Riemannian 4-manifolds with positive isotropic curvature on self-dual 2-forms (a weakening of the usual positive isotropic curvature inequality, which is required to hold for all 2-forms and which makes sense in any dimension). They prove that the only such compact Einstein metrics are the standard metrics on $S^4$ and $\overline{\C\P}^2$. They give a different proof of this fact, but as they say it also follows from Gursky's inequality. The argument is almost identical to that given here: having positive isotropic curvature on self-dual 2-forms is equivalent to $W^+<s/6$ and so, for an Einstein metric, $w_3\leq \frac{2\Lambda}{3}$. Reasoning as above then shows that $|W^+|^2< \frac{2\Lambda^2}{3}$ and so by Gursky's inequality the metric must actually be anti-self-dual. 
\end{remark}

We can rephrase Theorem \ref{Einstein_positive_definite} as saying that when $A$ is positive definite and a critical point of the volume functional $S$ then in fact $S(A)$ attains the topological maximum $S(A) = 2\chi + 3\tau$. In particular, for positive definite connections over a compact manifold, intermediate critical points are ruled out.
One possible approach to proving Conjecture \ref{positive_definite_conjecture} would be to use the calculus of variations, or even the gradient flow, to find a maximum of $S$. The goal would be to prove the following result, which, in view of Theorem \ref{Einstein_positive_definite} and Moser's proof of local rigidity of symplectic structures, implies Conjecture \ref{positive_definite_conjecture}.

\begin{conjecture}\label{deformation_positive_definite_conjecture}
Let $A$ be a positive definite connection over a compact 4-manifold. Then $A$ can be smoothly deformed through such connections to a critical point of $S$.
\end{conjecture}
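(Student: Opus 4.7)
The plan is to use the gradient flow of $S$, whose short-time existence is provided by Theorem~\ref{short_time_gf}, to drive an arbitrary positive definite connection $A$ to a critical point. The basic reason for optimism is that by the topological bound $\frac{1}{3}(2\chi+3\tau)<S(A)\leq 2\chi+3\tau$, the functional $S$ is bounded above on the space of positive definite connections, with equality characterizing the anti-self-dual Einstein critical points (which by Theorem~\ref{Einstein_positive_definite} are exactly the round $S^4$ and $\overline{\C\P}^2$ configurations). So the task is morally to produce a maximizer of $S$ along a flow line starting at $A$.

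Concretely, I would proceed in four steps. First, by Theorem~\ref{EL_equations} the $L^2$-gradient of $S$ is proportional to $\diff_A^*\Phi_A$, so the gradient flow reads $\partial_t A = c\,\diff_A^*\Phi_A$ for a suitable constant $c$ whose sign is chosen so that $S$ is increasing. Along the flow one then has the energy identity $\int_0^T\|\diff_A^*\Phi_A\|_{L^2}^2\,\diff t = S(A_T)-S(A_0)\leq 2\chi+3\tau-S(A_0)$, giving uniform control of $\|\diff_A^*\Phi_A\|_{L^2}$. Second, I would establish that positive definiteness is preserved along the flow: the set of positive definite connections is open, so this amounts to a uniform lower bound on the smallest eigenvalue of $M_{A_t}$. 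Third, I would attempt to prove long-time existence by deriving a priori $C^0$ bounds on $F_A$ and ruling out energy concentration; the Pontryagin-type identity $\int p_1(A)=2\chi+3\tau$ pins down the total topological charge so any bubble would cost a definite amount of $S$. Fourth, combining Uhlenbeck-style gauge fixing with the vanishing of $\diff_A^*\Phi_A$ in $L^2$ on time slices $t_n\to\infty$, extract a subsequential limit modulo gauge and show it is smooth and positive definite, hence a critical point; then concatenate with the flow trajectory from $A$ to obtain the desired deformation. By Theorem~\ref{Einstein_positive_definite} this critical point is the standard $S^4$ or $\overline{\C\P}^2$ configuration, and Moser's theorem will then identify $(Z,\omega_A)$ symplectically with the model $\C\P^3$ or $F(\C^3)$.

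The main obstacle, in my view, is preservation of positive definiteness under the flow. Positive definiteness of $M_A$ is an open but not closed condition, and nothing in the bare gradient-flow equation manifestly prevents an eigenvalue of $M_A$ from crashing to zero in finite time; if that occurred, both the functional $S$ and the map $\Phi_A$ would cease to be defined and the flow would stop. Finding a maximum principle for the smallest eigenvalue of $M_A$ (ideally coupled to the monotonicity of $S$) seems essential. One natural attempt is to use Gursky's inequality \eqref{Gursky_ineq}, which was the key input to Theorem~\ref{Einstein_positive_definite}, in a time-dependent form to show that as $S(A_t)$ increases, the connection is pushed toward the $W^+\equiv 0$ locus, where $M_A=(\Lambda/3)^2\,\mathrm{Id}$ and degeneration cannot occur. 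A related issue is that in the positive case one expects the eigenvalues of $M_A$ to remain comparable precisely because the gradient flow tries to equalise them (this is visible from the proof of Theorem~\ref{Einstein_positive_definite} using Gursky), so the hope is that the flow is itself self-regularising in this regime.

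Finally, convergence of the flow as $t\to\infty$ (as opposed to mere subsequential convergence) will likely require a \L{}ojasiewicz--Simon inequality at the critical point. Because the critical locus is the highly symmetric orbit of the model connection on $S^4$ or $\overline{\C\P}^2$, the linearised operator at such a critical point is expected to have finite-dimensional kernel with explicit description in terms of the isometry group, and Theorem~\ref{Hessian_elliptic} (the ellipticity of the Hessian with only finitely many positive eigenvalues) should then suffice to run the standard \L{}ojasiewicz argument. Combined with the analytical inputs above this would yield smooth convergence of the flow to the model, establishing Conjecture~\ref{deformation_positive_definite_conjecture} and hence Conjecture~\ref{positive_definite_conjecture}.
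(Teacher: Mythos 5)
There is a genuine gap here --- indeed the statement you are addressing is not a theorem of the paper but Conjecture~\ref{deformation_positive_definite_conjecture}, which the paper deliberately leaves open; the only results actually proved in this direction are the groundwork you cite, namely ellipticity of the Hessian modulo gauge (Theorem~\ref{Hessian_elliptic}) and short-time existence of the gradient flow (Theorem~\ref{short_time_gf}). Your proposal is essentially a restatement of the paper's suggested line of attack (flow or min-max towards a maximiser, then invoke Theorem~\ref{Einstein_positive_definite} and Moser), and every step beyond what the paper proves remains unestablished. In particular: (i) preservation of positive definiteness along the flow is precisely the crux, and your suggested mechanism does not work as stated --- Gursky's inequality (\ref{Gursky_ineq}) is a theorem about compact \emph{Einstein} 4-manifolds, which is why in Theorem~\ref{Einstein_positive_definite} it is applied only at a critical point where $\diff_A\Phi_A=0$; along a flow line $g_{A_t}$ is not Einstein, and no time-dependent analogue or maximum principle for the lowest eigenvalue of $M_{A_t}$ is known or sketched; (ii) long-time existence is unproved: the energy identity gives only an $L^2$ bound on $\diff_A^*\Phi_A$, and there is no analogue here of the higher-order a~priori estimates, Uhlenbeck compactness, or removable-singularity/bubbling theory that underpin the Yang--Mills comparison --- note also that the topological curvature bound $\int \tr(M_A)\,\nu_A = 4\pi^2\int p_1(A)$ is an $L^2$ bound with respect to the \emph{evolving} metric $g_{A_t}$, which itself degenerates exactly when definiteness is threatened, so it does not by itself rule out concentration; (iii) the {\L}ojasiewicz--Simon step is only relevant once the flow is known to enter a neighbourhood of the critical set, which presupposes the previous two points.

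What your write-up does get right is the global structure one would hope for: $S$ is bounded above by $2\chi+3\tau$, Theorem~\ref{Einstein_positive_definite} rules out intermediate critical points in the positive definite case, and any critical limit would necessarily be the round $S^4$ or $\overline{\C\P}{}^2$ configuration, so that Moser's argument yields Conjecture~\ref{positive_definite_conjecture}. But as it stands your text identifies the open problems rather than solving them; to turn it into a proof you would need, at minimum, a genuinely new estimate controlling the smallest eigenvalue of $M_{A_t}$ (or an alternative variational scheme that stays inside the space of positive definite connections), together with a compactness theory for definite connections with bounded action.
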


In \S\ref{CV_ok} we lay the basic analytic groundwork for this line of attack: we prove that the Hessian of $S$ is an elliptic second-order operator (modulo gauge) and that its gradient flow exists for short time. 

\subsection{Negative definite connections}\label{negative_definite_section}

Recall that using definite connections, we are able to describe all Einstein metrics for which ${\Lambda}/{3}+W^+$ is definite. Theorem \ref{Einstein_positive_definite} says that when ${\Lambda}/{3}+W^+$ is positive definite, the only compact examples are the standard metrics on $S^4$ and $\overline{\C\P}^2$. In the negative definite case, the only \emph{known} examples are hyperbolic and complex-hyperbolic manifolds. (Again, for the complex surfaces we use the non-complex orientation.)

\begin{question}\label{negative_Einstein_question}
Do there exist compact Einstein 4-manifolds with Einstein constant $\Lambda <0$, for which $\frac{\Lambda}{3}+W^+$ is negative definite, besides hyperbolic and complex-hyperbolic manifolds?
\end{question}

We remark that there is a related well-known problem: do there exist compact anti-self-dual Einstein manifolds with negative scalar curvature, besides hyperbolic and complex-hyperbolic manifolds?

One way to try and produce an Einstein manifold answering Question \ref{negative_Einstein_question} is to start with a negative definite connection and deform it to a critical point of the volume functional. Some possible starting connections are described in \cite{Fine2009Symplectic-Cala}, exploiting a construction of Gromov--Thurston \cite{Gromov1987Pinching-consta}. We recall the idea here. 

Let $M$ be an oriented compact hyperbolic 4-manifold with a totally-geodesic nulhomologous surface $\Sigma \subset M$. Since $\Sigma$ is nulhomologous, for each $m$ there is an $m$-fold branched cover $X_m \to M$, with branch locus $\Sigma$. Pulling back the hyperbolic metric from $M$ gives a singular metric on $X_m$. It can be smoothed in such a way as to give a metric for which (\ref{Riemannian_ineq}) of Theorem \ref{Riemannian_definite} is satisfied, with $s/12+W^+$ a negative definite endomorphism of $\Lambda^+$. It follows that the Levi-Civita connection on $\Lambda^+ \to X_m$ is negative definite. 

Finiteness considerations show that not all $X_m$ achieved this way can admit hyperbolic metrics and in fact Gromov and Thurston conjecture that none of them do. Now one can check that $\tau(X_m)=0$. This follows from the fact that $\Sigma$ is nulhomologous and an equivariant version of the signature theorem (see e.g., equation (15) in the article \cite{Hirzebruch1969The-signature-o} of Hirzebruch.) Since the signature is a multiple of $\int( |W^+|^2 - |W^-|^2)$ any anti-self-dual metric on $X_m$ is automatically conformally flat. It follows that any anti-self-dual Einstein metric on $X_m$ has constant curvature and hence would be hyperbolic. In particular, those non-hyperbolic $X_m$ (conjecturally all of them) do not admit anti-self-dual Einstein metrics. To answer Question \ref{negative_Einstein_question} then, one might start with the negative definite connection on $X_m$ and try to deform it to a critical point of $S$, giving an Einstein metric which is not anti-self-dual.

\section{The Hessian and gradient flow of the volume functional}\label{CV_ok}

In the previous section we discussed finding Einstein metrics by searching for critical points of the volume functional $S$. In this section we lay the basic groundwork for such an approach.

\subsection{The Hessian is elliptic modulo gauge}\label{Hessian_elliptic_section}

We begin with the Hessian of the volume function, $S$, defined on the space of definite connections. Definite connections satisfy a weak inequality and hence form an open set in the space of \emph{all} connections (in, for example, the $C^1$-topology). We use the natural affine structure on the space of connections to take the Hessian of $S$ at $A$, to get a symmetric bilinear form $H$ on $\Omega^1(X,E)$. (Recall we implicitly identify $\so(E) \cong E$ throughout.) Using the $L^2$-innerproduct defined by $g_A$ we can then realise this as 
\[
H(a,b) = \int_X (Da,b)\,\nu_A
\]
for a uniquely determined self-adjoint second-order operator $D$ on $\Omega^1(X,E)$, which we also call the Hessian. Our main result concerning $D$ is:

\begin{theorem}\label{Hessian_elliptic}
The Hessian of $S$ is elliptic modulo gauge, with finitely many positive eigenvalues. 
\end{theorem}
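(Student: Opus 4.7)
The plan is to realise the Hessian of $S$ as an explicit self-adjoint second-order differential operator on $\Omega^1(X,E)$, compute its principal symbol, show that this symbol is negative-definite modulo the symbol of the gauge action, and then invoke standard elliptic theory on the compact $X$. First I would differentiate the first variation formula (\ref{change_nu}) once more in a direction $b\in\Omega^1(X,E)$. Since $\Phi_A = \pm F_A\circ M_A^{-1/2}$ is a pointwise algebraic function of $F_A$ (with the normalisation $\tr\sqrt{M_A}=\pm\Lambda$ built in), and since $\partial_b(d_Aa)=[b\wedge a]$ is of order zero in $a,b$, the only second-order contribution to $\ddot S(a,b)$ comes from $\partial_b\Phi_A = K_A((d_A b)^+)$, where $K_A$ is an algebraic self-adjoint endomorphism of $\Omega^+(X,E)$ determined pointwise by $A$. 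The resulting Hessian takes the shape
\[
H(a,b) = c\int_X \bigl(K_A\,(d_A a)^+,\;(d_A b)^+\bigr)\,\nu_A \;+\; (\text{order-one and zeroth-order terms in }a,b),
\]
so the self-adjoint operator $D$ representing $H$ through the $g_A$-inner product has leading part $c\,d_A^*\circ K_A\circ P^+\circ d_A$ (where $P^+$ is self-dual projection), and its principal symbol at $\xi\in T_x^*X\setminus\{0\}$ is the quadratic form $a\mapsto \bigl(K_A(\xi\wedge a)^+,\,(\xi\wedge a)^+\bigr)$ on $T_x^*X\otimes E_x$.

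I would next establish the key algebraic fact: $K_A$ is negative-definite on $\Omega^+(X,E)$. Regarded as a function on symmetric positive-definite $3\times 3$ matrices, the integrand $(\tr\sqrt M)^2$ of (\ref{volume_equation}) has Hessian of signature $(1,5)$, the single positive direction pointing along $\sqrt{M_A}$; but this is precisely the direction removed by the scale-fixing $\tr\sqrt{M_A}=\pm\Lambda$ already encoded in $\Phi_A$, and what remains is strictly negative. It follows that the principal symbol is non-positive in $a$ and vanishes exactly when $(\xi\wedge a)^+=0$. For $\xi\neq 0$ this forces $a\in\xi\otimes E_x$, which is precisely the image of the principal symbol of the bundle-gauge generator $d_A\colon\Omega^0(X,E)\to\Omega^1(X,E)$. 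This is ellipticity modulo gauge: after imposing the Coulomb slice $d_A^* a=0$ (and noting that the diffeomorphism action $V\mapsto \iota_V F_A + d_A(A(V))$ is algebraic in $V$ modulo bundle gauge, so contributes nothing to the principal symbol), the augmented operator $D + d_A d_A^*$ is elliptic with negative-definite principal symbol. A second-order elliptic self-adjoint operator with negative-definite principal symbol on a compact manifold has discrete spectrum accumulating only at $-\infty$, so at most finitely many eigenvalues are positive; restricted to the Coulomb slice this gives the stated conclusion, the diffeomorphism slice adding only finite-codimension algebraic corrections that cannot enlarge the positive-eigenspace.

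The main obstacle is the algebraic step: proving strict negative-definiteness of $K_A$ on the whole of $\Omega^+(X,E)$ rather than mere semi-definiteness. Two competing contributions have to combine correctly --- the linearisation of $F\mapsto F\circ M(F)^{-1/2}$, which viewed on its own has a positive direction along $\sqrt{M_A}$, and the compensating term coming from differentiating the overall factor $\nu_A$ in (\ref{change_nu}), equivalently from the constraint $\tr\sqrt{M_A}=\pm\Lambda$. One must carry out this computation in an orthonormal frame diagonalising $M_A$ and verify both that the would-be positive eigenvalue is exactly cancelled by this normalisation and that no new null direction is introduced outside the gauge kernel $\xi\otimes E_x$. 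Once that is in hand, the elliptic-theory conclusion is formal.
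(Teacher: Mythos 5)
Your overall shape is right --- compute the principal part as a first-order operator sandwiched around a pointwise algebraic endomorphism, analyse its symbol, gauge-fix, and quote elliptic theory --- but the central algebraic claim on which your argument rests is false, and the failure is exactly where the real content of the theorem lies. You assert that after the scale normalisation the pointwise endomorphism $K_A$ is \emph{strictly} negative definite, so that the symbol kernel is only $\xi\otimes E_x$ (the image of the symbol of $\diff_A$), and hence that $D+\diff_A\diff_A^*$ is already elliptic. This cannot be correct, for two reasons. First, the fibrewise volume density is homogeneous of degree one in $M$ (see (\ref{volume_equation})), so by Euler's relation its Hessian annihilates the direction $M_A$ itself: the normalisation does not ``remove'' a positive direction, it leaves a genuine null direction. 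This is precisely Lemma \ref{L_semi-definite}: the relevant algebraic operator $L_A$ on $S^2E$ is negative \emph{semi}-definite with kernel spanned by $M_A$ (and, since the second variation factors through the symmetric pairing $\delta_A a$, your $K_A$ on $\Lambda^+\otimes E$ also kills the antisymmetric part, so it is nowhere near definite). Second, $S$ is invariant under the full gauge group of bundle isometries \emph{not} covering the identity, whose infinitesimal action contains the zeroth-order piece $u\mapsto \iota_uF_A$; since $D\circ f_A=0$ with $f_A$ of order zero, the image of $f_A$ must lie in the kernel of the principal symbol of $D$. So the symbol kernel is the $7$-dimensional space $\{\eta\otimes e+\iota_uF_A\}$ of Proposition \ref{symbol_Hessian} (indeed $\delta_\eta(\iota_uF_A)=\eta(u)M_A$ lands exactly in the kernel of $L_A$), not your $3$-dimensional $\xi\otimes E_x$. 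Your remark that the diffeomorphism action is ``algebraic, so contributes nothing to the principal symbol'' has the logic backwards: because it is algebraic, it degrades the symbol and must be fixed algebraically.

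Consequently your gauge-fixed operator $D+\diff_A\diff_A^*$ on the Coulomb slice is \emph{not} elliptic, and the closing appeal to spectral theory does not apply as stated; the vague final claim that the diffeomorphism directions ``cannot enlarge the positive eigenspace'' does not repair this. What is needed (and what the paper does) is to split the gauge algebra as $C^\infty(X,E)\oplus C^\infty(X,TX)$, restrict to sections of the pointwise orthogonal complement $W_A=\ker f_A^*$ of $\im f_A$, and consider $D'=D-\Pi\,\diff_A\diff_A^*\,\Pi$ on $C^\infty(W_A)$; only after this mixed differential--algebraic gauge fixing is the symbol negative definite (the two semi-definite pieces have trivial common kernel inside $W_A$). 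Once ellipticity of $D'$ is in place, your G{\aa}rding-type argument for ``spectrum bounded above, hence finitely many positive eigenvalues'' is a legitimate alternative to the paper's homotopy-of-operators argument; but that step is the only part of your plan that survives unchanged.
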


As was mentioned above, this is in sharp contrast to the Hessian of the Einstein--Hilbert action. 

The Hessian in general and this result in particular are far simpler to understand at a definite connection for which $M_A = (\Lambda/3)^2 1_E$ is a multiple of the identity (with the multiple chosen so that $\tr(\sqrt{M_A}) = |\Lambda|$). Recall that for such a connection $g_A$ is anti-self-dual Einstein; meanwhile the functional $S$ attains its a~priori topological maximum (equal to $2\chi(X)+3\tau(X)$) precisely at connections corresponding to ASD Einstein metrics. At such points the functional is clearly concave so must have non-positive Hessian. On a first reading of the proof of Theorem \ref{Hessian_elliptic}, it is useful to focus on this case $M_A = (\Lambda/3)^2 1_E$ where a great many of the formulae and interpretations become simpler. We highlight these simplifications at various points. 

\subsubsection{Gauge fixing}

Before proving Theorem \ref{Hessian_elliptic}, we first make precise what we mean by ``elliptic modulo gauge''. The gauge group $\G$ is the group of all fibrewise linear isometries $E \to E$. This acts on connections by pulling back, preserving both the space of definite connections and the action $S$. Differentiating this action at a connection $A$ gives a map $R_A \colon \Lie(\G) \to \Omega^1(X, E)$.  Since $S$ is gauge-invariant, it follows that its Hessian $D$ vanishes on $\im R_A$. The claim in Theorem \ref{Hessian_elliptic} is that on the orthogonal complement of $\im R_A$, the Hessian is the restriction of an elliptic operator whose spectrum is bounded above.

To describe this orthogonal complement, we need a concrete description of $R_A$. First consider the subgroup $\G_0$ of gauge transformations covering the identity on $X$ (the usual gauge group in Yang--Mills theory). It has Lie algebra $\Lie (\G_0) = C^\infty(X,E)$ and here $R_A$ is given by the familiar formula: $R_A(\xi) = -\diff_A \xi$. 

Next, we use the connection $A$ to determine a vector-space complement to $\Lie(\G_0) \subset \Lie (\G)$ by horizontally lifting vector fields on $X$ to $E$. This gives
\[
\Lie(\G) = \Lie(\G_0) \oplus \Hor_A \cong \Lie(\G_0) \oplus C^\infty(X,TX)
\]
where $\Hor_A \cong C^\infty(X,TX)$ are the horizontal lifts to $E$ of vector fields on $X$. Of course, $\Hor_A$ is not a Lie subalgebra precisely because $A$ has curvature. 

\begin{lemma}\label{linear action of vector}
Given $u \in \Hor_A$ its infinitesimal action at $A$ is $R_A(u) = - \iota_u F_A$.
\end{lemma}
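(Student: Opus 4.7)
The plan is to realise the flow of a horizontal lift $\hat u$ as an explicit one-parameter family of gauge transformations in $\G$ and read off the infinitesimal change it produces in $A$ from Cartan's magic formula.

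First I would observe that, because $A$ is a metric connection in $E$, $A$-parallel transport along any curve in $X$ is a fibrewise linear isometry. Consequently the flow $\Phi_t \colon E \to E$ of the horizontal lift $\hat u$---which covers the flow $\phi_t$ of $u$ on $X$ and acts on each fibre $E_x$ by $A$-parallel transport along the segment $s \mapsto \phi_s(x)$---takes values in $\G$ and satisfies $\tfrac{d}{dt}\big|_{t=0}\Phi_t = \hat u$. Thus $R_A(u)$ is the derivative at $t=0$ of $\Phi_t \cdot A$ for the same $\G$-action on connections that produces $R_A(\xi) = -d_A\xi$ in the $\G_0$-direction.

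Next I would pass to the associated principal $\SO(3)$-frame bundle $P$ of $E$, on which $A$ corresponds to a connection 1-form $\omega \in \Omega^1(P, \so(3))$ with curvature 2-form $\Omega$. Writing $\tilde u$ for the horizontal lift of $u$ to $P$, horizontality gives $\omega(\tilde u) = 0$, so Cartan's formula yields
\[
\mathcal L_{\tilde u}\omega \;=\; \iota_{\tilde u}\,d\omega \;+\; d\bigl(\omega(\tilde u)\bigr) \;=\; \iota_{\tilde u}\,d\omega.
\]
The structure equation $d\omega = -\tfrac{1}{2}[\omega,\omega] + \Omega$ together with $\iota_{\tilde u}[\omega,\omega] = 2[\omega(\tilde u),\omega] = 0$ then reduces this to $\mathcal L_{\tilde u}\omega = \iota_{\tilde u}\Omega$. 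Under the standard identification of horizontal equivariant 1-forms on $P$ with 1-forms on $X$ valued in $\so(E) \cong E$, the right-hand side descends to $\iota_u F_A$, and the same overall sign that turns $d_A\xi$ into $-d_A\xi$ in the $\G_0$-direction now turns $\iota_u F_A$ into $-\iota_u F_A$, giving the lemma.

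The only real obstacle is bookkeeping: one must ensure that the sign convention used in defining the $\G$-action on connections is consistent across both $\Lie(\G_0)$ and $\Hor_A$, so that the minus in $-\iota_u F_A$ matches the minus in $R_A(\xi) = -d_A\xi$. Once this is fixed, the content of the proof is just Cartan's magic formula combined with the structure equation.
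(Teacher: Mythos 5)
Your argument is essentially the paper's own proof: pass to the principal bundle, use $R_A(u) = -\mathcal L_{\tilde u}A = -\iota_{\tilde u}\diff A - \diff(A(\tilde u))$ with $A(\tilde u)=0$ for horizontal lifts, and note that $\iota_{\tilde u}[A\wedge A] = 2[A(\tilde u),A]=0$ so $\iota_{\tilde u}\diff A = \iota_{\tilde u}F_A$. Your additional opening step, realising the flow of the horizontal lift as parallel transport along flow lines to confirm it lies in $\G$, is a harmless extra justification of a point the paper takes for granted, and the rest is correct.
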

\begin{proof}
We switch to the principal bundle formalism. Let $P \to X$ be the principal frame bundle of $E$. We interpret $A$ as an $\SO(3)$-equivariant 1-form on $P$ with values in $\so(3)$ whilst $\Lie(\G)$ is the Lie algebra of $\SO(3)$-invariant vector fields on $P$. Given \emph{any} element $u \in \Lie (\G)$, the corresponding infinitesimal action on $A$ is $R_A(u) = - L_u(A) = -\diff(A(u)) - \iota_u \diff A$. Now $\Hor_A$ is precisely those $u$ with $A(u) =0$. For such vectors, $\iota_u[A\wedge A] = 2[A(u), A] = 0$. It follows that $\iota_u \diff A = \iota_u F_A$, since $F_A = \diff A + \frac{1}{2}[A\wedge A]$.
\end{proof}

So, given $A$ we have an isomorphism $\Lie(\G) \cong C^\infty(X, E) \oplus C^\infty(X,TX)$ with respect to which the infinitesimal action at $A$ is given by 
\begin{equation}\label{infinitesimal_action}
R_A(\xi, u) = -\diff_A \xi - \iota_u F_A.
\end{equation}
Note that the action is first order in derivatives of $\xi$, but zeroth order in $u$. This means that the gauge fixing will involve a mixture of algebraic and differential conditions. The orthogonal complement to $\im R_A$ is $\ker R_A^*$. Write $f_A \colon TX \to \Lambda^1 \otimes E$ for the bundle homomoprhism $f_A(u) = \iota_u F_A$ and set $W_A = \ker f_A^* = (\im f_A)^\perp$.  Then $R_A^* = -(d_A^* \oplus f_A^*)$ and so $\ker R_A^* = \ker d_A^* \cap W_A$.

\begin{lemma}
There is an $L^2$-orthogonal decomposition,
$
\Omega^1(X, E) = \im R_A \oplus \ker R_A^*
$.
\end{lemma}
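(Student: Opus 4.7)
The $L^2$-orthogonality $\im R_A \perp \ker R_A^*$ is immediate from the adjoint relation: for any $(\xi,u)\in\Lie(\G)$ and $\beta\in\ker R_A^*$, one has $\langle R_A(\xi,u),\beta\rangle_{L^2} = \langle (\xi,u),R_A^*\beta\rangle_{L^2} = 0$. So the substance of the lemma reduces to proving that $\im R_A$ is $L^2$-closed, since then $\Omega^1(X,E) = \overline{\im R_A}\oplus\ker R_A^* = \im R_A\oplus\ker R_A^*$.

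My plan is to establish the Fredholm property of the composition $R_A^*R_A$ acting on suitable Sobolev completions of $\Lie(\G)\cong C^\infty(X,E)\oplus C^\infty(X,TX)$. Using (\ref{infinitesimal_action}), this operator has the $2\times 2$ block form
\[
R_A^*R_A \;=\; \begin{pmatrix} \diff_A^*\diff_A & \diff_A^*\circ f_A \\ f_A^*\circ\diff_A & f_A^*f_A \end{pmatrix},
\]
whose top-left entry is the second-order elliptic Laplacian on $E$, whose off-diagonal entries are first order, and whose bottom-right entry is the zeroth-order algebraic operator $u\mapsto (f_A^*f_A)u$. The decisive point is that the definiteness of $A$ (Definition \ref{definite}) forces $f_A\colon TX\to\Lambda^1\otimes E$ to be fibrewise injective: if $\iota_u F_A$ vanished at $p$ for some non-zero $u\in T_pX$, we would have $F_A(u,\cdot)=0$ at $p$, contradicting the definiteness inequality. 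Hence $f_A^*f_A$ is a pointwise positive-definite, invertible endomorphism of $TX$.

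With invertibility of the bottom-right block in hand, the natural next step is a Schur-complement reduction of the equation $R_A^*R_A(\xi,u)=R_A^*\alpha$: one solves the second row algebraically for $u$ in terms of $\xi$, then substitutes into the first row to obtain a single second-order equation on $\xi\in C^\infty(X,E)$. The resulting operator has the same principal symbol as $\diff_A^*\diff_A$, so it is elliptic and therefore Fredholm on the compact manifold $X$. This Fredholm property for the reduced operator transfers to $R_A^*R_A$, and thence to $R_A$ itself by standard functional analysis, yielding the closed range we need and hence the desired orthogonal decomposition.

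The main obstacle I expect is organising the mixed-order structure cleanly: $R_A$ is a first-order differential operator in $\xi$ but a purely algebraic operator in $u$, so $R_A^*R_A$ is not an elliptic operator in the textbook sense but rather a Douglis--Nirenberg system of mixed orders. Care is therefore needed in choosing the Sobolev weights on the two factors of $\Lie(\G)$ so that the Schur-complement elimination is rigorous and the usual elliptic regularity and Fredholm estimates apply. It is precisely the definiteness hypothesis that makes the reduction possible; without invertibility of $f_A^*f_A$ one could not eliminate $u$ algebraically, and the remaining problem for $\xi$ would be degenerate.
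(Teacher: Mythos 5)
Your overall strategy (prove that $\im R_A$ is $L^2$-closed by establishing a Fredholm property for the mixed-order system $R_A^*R_A$, using definiteness to invert the algebraic block $f_A^*f_A$) is a legitimate route, and it is quite different from the paper's argument, which simply combines the decomposition $\Omega^1(X,E)=\im \diff_A\oplus\ker\diff_A^*$ from elliptic theory with the pointwise splitting $\Lambda^1\otimes E=\im f_A\oplus\ker f_A^*$. However, there is a genuine error at your decisive step. After eliminating $u=(f_A^*f_A)^{-1}f_A^*(\alpha-\diff_A\xi)$, the reduced operator on $\xi$ is $\diff_A^*\,\Pi\,\diff_A$, where $\Pi=1-f_A(f_A^*f_A)^{-1}f_A^*$ is the pointwise orthogonal projection onto $W_A=\ker f_A^*$. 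The term you subtract, $\diff_A^*f_A(f_A^*f_A)^{-1}f_A^*\diff_A$, is a bundle endomorphism sandwiched between two first-order operators, hence itself \emph{second order}, so it contributes at the level of the principal symbol. Your claim that the Schur complement ``has the same principal symbol as $\diff_A^*\diff_A$'' is therefore false: the symbol at a covector $\eta$ is $e\mapsto\iota_\eta\Pi(\eta\otimes e)$, whose quadratic form is $|\Pi(\eta\otimes e)|^2$, and ellipticity is equivalent to the statement that no nonzero decomposable element $\eta\otimes e$ of $\Lambda^1\otimes E$ lies in $\im f_A$. The only consequence of definiteness you invoke (fibrewise injectivity of $f_A$) does not give this.

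The gap is repairable, but the missing input is precisely a stronger use of Definition \ref{definite}: if $\iota_uF_A=\eta\otimes e$ with $u\neq 0$, choose $v\in\ker\eta$ linearly independent of $u$ (possible since $\ker\eta$ is $3$-dimensional); then $F_A(u,v)=\eta(v)\,e=0$, contradicting definiteness, so $\im f_A$ meets the decomposables only in $0$ and the Schur-complement symbol is positive definite. Note also that recasting the problem as a Douglis--Nirenberg system does not let you dodge this: with the natural weights, DN-ellipticity of your block matrix is exactly invertibility of the same Schur-complement symbol, so the Sobolev bookkeeping you flag as the main obstacle is not the real issue -- the geometric fact above is. With that lemma inserted, the rest (Fredholmness, closed range, and elliptic regularity to pass from Sobolev to smooth sections) goes through, though the result is a much heavier proof than the paper's one-line combination of the two orthogonal decompositions.
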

\begin{proof}
This follows from combining the orthogonal decomposition $\Omega^1(X,E) = \im \diff_A \oplus \ker \diff_A^*$ provided by elliptic theory and the pointwise orthogonal decomposition $\Lambda^1 \otimes E = \im f_A \oplus \ker f_A^*$.
\end{proof}

By gauge invariance, the Hessian $D$ of $S$ preserves sections of $W_A$. We will prove Theorem \ref{Hessian_elliptic} by showing that on those sections $a$ of $W_A$ for which $\diff_A^*a = 0$, $D$ is equal to a genuinely elliptic second order operator on $C^\infty(W_A)$.

\begin{remark}\label{spin_gauge}
When $M_A$ is a multiple of the identity, the splitting $\Lambda^1 \otimes E = \im f_A \oplus W_A$ coincides with a standard decomposition coming from Riemannian geometry. Write $S_\pm$ for the spin bundles of an oriented Riemannian 4-manifold and $S^m_\pm$ for the $m^\text{th}$ symmetric power of $S_{\pm}$. Whilst $S_{\pm}$ are complex vector bundles, even powers---$S_-^p \otimes S_+^q$ with $p+q$ even---carry real structures whose fixed loci are real vector bundles of half the dimension. In what follows we will only encounter these even powers, which exist globally even when $X$ is not spin. In these cases we use $S_-^p \otimes S_+^q$ to denote the real vector bundle, of real rank $p+q+2$. For example, by $S_- \otimes S_+$ we mean the spin bundle isomorphic to the real cotangent bundle. 

There is an orthogonal decomposition 
\[
\Lambda^1 \otimes \Lambda^+ 
\cong
\Lambda^1 \oplus W,
\]
where $W = S_- \otimes S^3_+$. This follows from the isomorphisms $\Lambda^1 \cong S_+\otimes S_-$ and $\Lambda^+ \cong S^2_+$ and the irreducible decomposition
\[
S_- \otimes S_+ \otimes S^2_+ \cong (S_- \otimes S_+) \oplus (S_- \otimes S^3_+).
\]
Using the identification $\Phi_A \colon E \to \Lambda^+_A$, this splitting pulls back to $\Lambda^1 \otimes E$. One can check that this agrees with $\Lambda^1 \otimes E = \im f_A \oplus W_A$ precisely when $M_A$ is a multiple of the identity (in which case $F_A$ is a multiple of $\Phi_A$). 
\end{remark}

\subsubsection{The principal part of the Hessian}

We next describe the leading order part of the Hessian of $S$. We break the calculation up into a series of small steps. We begin by introducing a first order operator $\delta_A \colon \Omega^1(X,E) \to C^\infty(S^2E)$ defined as follows. Let $e_i$ be a local oriented orthonormal frame for $E$ in which $F_A = \sum F_i \otimes e_i, d_A a= \sum d_A a_i\otimes e_i$. Then
\[
(\delta_A a)_{ij} \, \nu_A = d_A a_i \wedge  F_j + F_i\wedge d_A a_j,
\]
defines the $(i,j)$-components of $\delta_A a$. 

As we will see in the next lemma, $\delta_A$ appears in the first variation of $\nu_A$ and $M_A$. This ultimately leads to the principal part of the Hessian having the form $\delta_A^* \circ L_A \circ \delta_A$ for some endomorphism $L_A$ of the bundle $S^2E$.

\begin{lemma}
Given an infinitesimal change $\dot{A} = a$ in a definite connection, the corresponding changes in $\nu_A$ and $M_A$ are given by
\begin{eqnarray}
\label{dot_nu}\dot{\nu}_A
	&=&
		\frac{1}{|\Lambda|} \tr \left(M_A^{-1/2}\delta_A a\right) \nu_A,\\
\label{dot_M}\dot{M}_A
	&=&
		\delta_A a - 
		\frac{1}{|\Lambda|} \tr \left(M_A^{-1/2}\delta_A a\right) M_A
\end{eqnarray}
\end{lemma}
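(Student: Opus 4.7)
The plan is to differentiate the defining relation $M_{ij}\,\nu_A = F_i\wedge F_j$ and then use the normalization $\tr\sqrt{M_A}=|\Lambda|$ to pin down the resulting change in the volume form. Pick a local orthonormal frame $e_1,e_2,e_3$ for $E$, writing $F_A=\sum F_i\otimes e_i$, $a=\sum a_i\otimes e_i$ and $d_A a=\sum (d_A a)_i\otimes e_i$. The standard Yang--Mills identity $\dot F_A=d_A a$ gives $\dot F_i=(d_A a)_i$. Differentiating $M_{ij}\,\nu_A=F_i\wedge F_j$ along $\dot A=a$ and recognising the right-hand side as the definition of $\delta_A$ yields
\[
\dot M_{ij}\,\nu_A+M_{ij}\,\dot\nu_A=(d_A a)_i\wedge F_j+F_i\wedge(d_A a)_j=(\delta_A a)_{ij}\,\nu_A.
\]
Writing $\dot\nu_A=\phi\,\nu_A$ for a scalar function $\phi$, this rearranges as the intermediate matrix identity $\dot M_A=\delta_A a-\phi\,M_A$.

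Next I would determine $\phi$ from the normalization. Differentiating $(\sqrt{M_A})^2=M_A$ and tracing gives the standard identity $\tr(M_A^{-1/2}\dot M_A)=2\tr\bigl(\tfrac{d}{dt}\sqrt{M_A}\bigr)$, so constancy of $\tr\sqrt{M_A}=|\Lambda|$ forces $\tr(M_A^{-1/2}\dot M_A)=0$. Applying $\tr(M_A^{-1/2}\,\cdot\,)$ to the formula for $\dot M_A$ and using $\tr(M_A^{-1/2}M_A)=\tr\sqrt{M_A}=|\Lambda|$ yields
\[
0=\tr\bigl(M_A^{-1/2}\delta_A a\bigr)-\phi\,|\Lambda|,
\]
which is exactly (\ref{dot_nu}); substituting back into $\dot M_A=\delta_A a-\phi M_A$ delivers (\ref{dot_M}).

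There is no genuine obstacle: once the two standard ingredients $\dot F_A=d_A a$ and the derivative formula for $\sqrt{M}$ are in place, the rest is purely algebraic. The only subtlety worth flagging is conceptual: the wedge-product identity captured by $\delta_A$ is pointwise and tensorial, whereas the normalization (\ref{volume_equation}) is what determines which scalar piece of $\delta_A a$ rescales $\nu_A$ as opposed to contributing to $\dot M_A$. The upshot, which meshes nicely with the anticipated $\delta_A^*\!\circ L_A\!\circ\delta_A$ structure for the principal part of the Hessian, is that (\ref{dot_M}) realises $\dot M_A$ as the part of $\delta_A a$ satisfying $\tr(M_A^{-1/2}\dot M_A)=0$.
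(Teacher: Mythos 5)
Your proof is correct and follows essentially the same route as the paper: differentiate $M_{ij}\,\nu_A = F_i\wedge F_j$, use constancy of $\tr\sqrt{M_A}=|\Lambda|$ to force $\tr\bigl(M_A^{-1/2}\dot M_A\bigr)=0$, and solve for the scalar factor in $\dot\nu_A$ (the paper merely abbreviates the first formula by citing its earlier equation (\ref{change_nu}), which was derived by exactly this computation). Your explicit verification of $\tr\bigl(M_A^{-1/2}\dot M_A\bigr)=2\tr\bigl(\tfrac{d}{dt}\sqrt{M_A}\bigr)$ is a welcome small addition the paper leaves implicit.
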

\begin{proof}
Equation (\ref{dot_nu}) is just a restatement of (\ref{change_nu}) using $\delta_A$. To prove (\ref{dot_M}), use the definition of $M$ in a local orthonormal frame: $M_{ij}\nu_A = F_i \wedge F_j$. Differentiating this gives $\dot{M}_A \nu_A + M_A \dot{\nu}_A = (\delta_A a) \nu_A$ which together with (\ref{dot_nu}) completes the proof.
\end{proof}

\begin{remark}
When $M_A$ is a multiple of the identity, all these formulae simplify. Identifying $E$ and $\Lambda^+$ via $\Phi_A$, we view $\delta_A \colon \Omega^1(X, \Lambda^+) \to C^\infty(S^2\Lambda^+)$. When $M_A = (\Lambda/3)^2 1_E$, this is simply the composition
\[
\Omega^1(X, \Lambda^+) \stackrel{\diff_\nabla}{\longrightarrow}
\Omega^2(X, \Lambda^+) \stackrel{ }{\longrightarrow}
C^\infty(S^2\Lambda^+)
\]
where $\diff_\nabla$ is the covariant exterior derivative associated to the Levi-Civita connection and the second arrow is projection $\Lambda^2 \otimes \Lambda^+ \to S^2\Lambda^+$ onto the symmetric tensor product. 

Notice also that the formulae for $\dot{\nu}_A$ and $\dot{M}_A$ simplify. For example, $\dot{M}_A$ is simply the trace-free part of $\delta_A a$. 
\end{remark}

Another ingredient we will need to describe the Hessian is a formula for the change in ${M_A}^{-1/2}$. This is determined from equation (\ref{dot_M}) for $\dot{M}_A$ by some linear algebra which we now describe.

\begin{lemma}\label{deriv_M-1/2}
Let $M$ be a positive definite matrix and $N$ any symmetric matrix regarded as an infinitesimal change in $M$. The corresponding infinitesimal change in $M^{-1/2}$ is the unique solution $G$ to the equation
\begin{equation}\label{G_equation}
GM^{-1/2} + M^{-1/2}G = - M^{-1}NM^{-1}
\end{equation}
\end{lemma}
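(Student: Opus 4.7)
The plan is to derive the equation from the identity $M^{-1/2}\cdot M^{-1/2}=M^{-1}$ by differentiation, and then verify that the resulting linear equation in $G$ has a unique solution.

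First I would differentiate the product $M^{-1/2}M^{-1/2}=M^{-1}$ with respect to the one-parameter family determined by $\dot M = N$. The left hand side produces $GM^{-1/2}+M^{-1/2}G$ by Leibniz, while the right hand side is computed by differentiating $MM^{-1}=I$ (which gives $(M^{-1})\dot{{}} = -M^{-1}\dot M M^{-1} = -M^{-1}NM^{-1}$). This immediately gives equation (\ref{G_equation}), showing that the derivative of $M^{-1/2}$ solves the stated linear equation.

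The second part is to show uniqueness, i.e.\ that the linear map $T\colon G\mapsto GM^{-1/2}+M^{-1/2}G$ on the space of (symmetric) matrices is injective. This is a Sylvester/Lyapunov-type equation with coefficient $A:=M^{-1/2}>0$. I would argue as follows: since $A$ is symmetric and positive definite, diagonalise it as $A=U\operatorname{diag}(\lambda_i)U^T$ with $\lambda_i>0$; then $T$ becomes, in the rotated basis, the linear map sending a matrix with entries $\tilde G_{ij}$ to the matrix with entries $(\lambda_i+\lambda_j)\tilde G_{ij}$. Since $\lambda_i+\lambda_j>0$ for all $i,j$, this map is invertible. Hence $T$ is a linear isomorphism and $G$ is uniquely determined by (\ref{G_equation}).

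The proof is essentially mechanical; no step is really an obstacle. The only point that requires any care is noting that $G$ is indeed symmetric when $N$ is, so that the uniqueness argument above (which can be run on symmetric matrices, or on all matrices) is legitimate; this is immediate because $M^{-1/2}$ is symmetric and so the right hand side of (\ref{G_equation}) is symmetric, forcing $G$ to be symmetric by the same diagonalisation argument.
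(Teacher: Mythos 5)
Your proof is correct and follows essentially the same route as the paper: differentiate $M^{-1/2}M^{-1/2}=M^{-1}$ to obtain (\ref{G_equation}), then observe that the Sylvester-type operator $G\mapsto GM^{-1/2}+M^{-1/2}G$ is invertible because $M^{-1/2}$ is positive definite. The only difference is cosmetic -- the paper multiplies through by $M$ to rewrite the equation as $H(G)=N$ with $H(N)=-MNM^{1/2}-M^{1/2}NM$ and leaves the isomorphism claim as ``one can check'', whereas you spell out the invertibility by diagonalising and noting $\lambda_i+\lambda_j>0$, which is a welcome extra detail.
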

\begin{proof}
The equation follows from differentiating $M^{-1/2}M^{-1/2} = M^{-1}$. To see that it has a unique solution, define an endomorphism $f$ of the space of symmetric matrices by 
\begin{equation}\label{H}
H \colon N \mapsto - MNM^{1/2} - M^{1/2}NM.
\end{equation}
Now (\ref{G_equation}) says $H(G) =N$. But one can check $H$ is an isomorphism and so $G$ can be recovered from $N$.
\end{proof}
With this lemma in hand, we define a map $G_A \colon S^2E \to S^2E$ by setting $G_A(N)$ to be the unique solution to (\ref{G_equation}) with $M=M_A$. To give a clean formula for the infinitesimal change in $M_A^{-1/2}$, we  write $L_A \colon S^2E \to S^2E$ for the linear map given by
\begin{equation}\label{L}
L_A(N) = G_A\left(
N -  \frac{1}{|\Lambda|} \tr\left(M_A^{-1/2}N\right)M_A
\right)
\end{equation}

\begin{lemma}\label{MA-1/2}
Given an infinitesimal change $\dot{A} = a$ in a definite connection $A$, the corresponding change in $M_A^{-1/2}$ is $L_A(\delta_A a)$.
\end{lemma}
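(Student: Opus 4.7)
The proof is essentially a bookkeeping exercise chaining together Lemmas already proved. The plan is to apply Lemma~\ref{deriv_M-1/2} pointwise on $X$ with $M = M_A$ and $N = \dot{M}_A$, then substitute the explicit formula for $\dot M_A$ from equation~(\ref{dot_M}), and finally recognize the result as $L_A(\delta_A a)$ via the definition~(\ref{L}).

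Concretely, I would proceed as follows. First, note that $M_A$ is a positive definite symmetric endomorphism of $E$ (away from degenerate points, which do not occur for a definite connection), so Lemma~\ref{deriv_M-1/2} applies fibrewise. Writing $G_A$ for the fibrewise linear map that assigns to a symmetric $N \in S^2 E$ the unique solution of~(\ref{G_equation}) with $M = M_A$, the lemma gives
\[
\frac{\diff}{\diff t}\bigg|_{t=0} \! (M_A + t \dot M_A)^{-1/2} \;=\; G_A(\dot M_A).
\]
Second, equation~(\ref{dot_M}) expresses $\dot M_A$ in terms of $\delta_A a$:
\[
\dot M_A \;=\; \delta_A a \;-\; \frac{1}{|\Lambda|}\,\tr\!\left(M_A^{-1/2}\,\delta_A a\right) M_A.
\]
Substituting into $G_A(\dot M_A)$ and comparing with the definition~(\ref{L}) of $L_A$ yields $\dot{M}_A^{-1/2} = L_A(\delta_A a)$, as required.

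The only point that needs a brief verification is that the operator $H$ from~(\ref{H}) defining $G_A$ is indeed invertible (so that $G_A$, and hence $L_A$, is well-defined). This is asserted in the proof of Lemma~\ref{deriv_M-1/2} and follows, for instance, by diagonalizing $M_A$ fibrewise: in an eigenbasis with eigenvalues $\mu_i > 0$, the operator $H$ acts on the $(i,j)$-component of $N$ by multiplication by $-(\mu_i \mu_j^{1/2} + \mu_i^{1/2}\mu_j) < 0$, so it is manifestly an isomorphism on symmetric matrices. I do not anticipate any real obstacle here; the calculation is purely algebraic and the main content of the lemma is the packaging, which was set up precisely so that~(\ref{L}) matches~(\ref{dot_M}) composed with $G_A$.
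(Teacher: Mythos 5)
Your proposal is correct and follows exactly the paper's (very brief) proof: combine the formula (\ref{dot_M}) for $\dot{M}_A$ with Lemma \ref{deriv_M-1/2} and observe that $G_A(\dot{M}_A)$ is precisely $L_A(\delta_A a)$ by the definition (\ref{L}); your fibrewise diagonalization check that $H$ is invertible is a welcome extra detail. (Only a notational quibble: write the conclusion as the derivative of $M_A^{-1/2}$ rather than $\dot{M}_A^{-1/2}$, which could be misread as a power of $\dot{M}_A$.)
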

\begin{proof}
This comes from putting together equation (\ref{dot_M}) for $\dot{M}_A$ with Lemma \ref{deriv_M-1/2}.
\end{proof}

The somewhat complicated looking formula (\ref{L}) for $L_A$ (equivalently the expression (\ref{dot_M}) for $\dot{M}_A$) has a simple geometric explanation. One can check that $G_A$ is self-adjoint and negative definite, i.e., that $\tr\left( G_A(N)N'\right) = \tr \left(N G_A(N')\right)$ and $\tr \left(G_A(N)N\right) < 0$ if $N \neq 0$. (One way is to prove these facts first for $H$ defined in (\ref{H}).) This means we can use $-G_A$ to define a new fibrewise innerproduct on the bundle $S^2E$ by $\langle P, Q \rangle := - \tr\left(G_A(P)Q\right)$. Now $G_A(M_A)=-\frac{1}{2}M_A^{-1/2}$ and so $\langle M_A, M_A \rangle = \frac{|\Lambda|}{2}$. We now see that
\[
N - \frac{1}{|\Lambda|} \tr\left(M_A^{-1/2}N\right)M_A
=
N - \frac{\langle M_A, N\rangle}{\langle M_A, M_A\rangle} M_A
\]
is simply the orthogonal projection of $N$ away from the span of $M_A$ in the new innerproduct $\langle \cdot,\cdot \rangle$. It follows that with respect to the \emph{standard} innerproduct $\tr(PQ)$ on $S^2E$, $L_A$ is self-adjoint, negative semi-definite with kernel spanned by $M_A$. We record this in the following Lemma.

\begin{lemma}\label{L_semi-definite}
The map $L_A \colon S^2E \to S^2E$ defined in (\ref{L}) is a self-adjoint negative semi-definite operator with kernel spanned by $M_A$.
\end{lemma}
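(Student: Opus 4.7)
The plan is to reduce every assertion about $L_A$ to two elementary facts about the auxiliary map $H$ of (\ref{H}): that $H$ is self-adjoint and negative definite on symmetric matrices, with $H(-\tfrac12 M^{-1/2}) = M$. Once these are in hand, the modified inner product $\langle P,Q\rangle := -\tr(G_A(P)Q)$ sketched in the paragraph preceding the lemma realises $L_A$ as a composition (orthogonal projection) followed by $G_A$, after which self-adjointness, sign, and kernel all follow by bookkeeping.

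First I would verify the two properties of $H$. Self-adjointness with respect to $\tr(PQ)$ is immediate from cyclicity of the trace and symmetry of $M, N, N'$: both $\tr(MNM^{1/2}N')$ and $\tr(M^{1/2}NMN')$ are symmetric in $N, N'$. For negative definiteness, cyclicity gives $\tr(H(N)N) = -2\tr(M^{1/2}NMN)$, and setting $Q = M^{1/4}NM^{1/4}$ (which is symmetric because $N$ and $M^{1/4}$ are) yields $\tr(M^{1/2}NMN) = \tr(Q^2) \geq 0$, with equality iff $Q = 0$ iff $N = 0$. Hence $H|_{S^2 E}$ is invertible and $G_A = H^{-1}$ is self-adjoint and negative definite. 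A direct substitution $H(-\tfrac12 M^{-1/2}) = \tfrac12 M M^{-1/2} M^{1/2} + \tfrac12 M^{1/2} M^{-1/2} M = M$ gives $G_A(M_A) = -\tfrac12 M_A^{-1/2}$.

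Next I set $\langle P, Q\rangle = -\tr(G_A(P)Q)$; by Step 1 this is a genuine inner product on $S^2 E$. Using $G_A(M_A) = -\tfrac12 M_A^{-1/2}$ and the normalisation $\tr(M_A^{1/2}) = |\Lambda|$, one computes $\langle M_A, M_A\rangle = \tfrac{|\Lambda|}{2}$ and $\langle M_A, N\rangle = \tfrac12\tr(M_A^{-1/2}N)$, so the bracketed expression in (\ref{L}) is exactly the $\langle\cdot,\cdot\rangle$-orthogonal projection $\pi_A(N) := N - \frac{\langle M_A, N\rangle}{\langle M_A, M_A\rangle}M_A$ of $N$ onto the orthogonal complement of $\mathrm{span}(M_A)$. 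Thus $L_A = G_A \circ \pi_A$. Since $\pi_A(N)$ is $\langle\cdot,\cdot\rangle$-orthogonal to $M_A$ while $N' - \pi_A(N') \in \mathrm{span}(M_A)$, one concludes
\begin{equation*}
\tr\bigl(L_A(N)\, N'\bigr) \;=\; -\langle \pi_A(N), N'\rangle \;=\; -\langle \pi_A(N), \pi_A(N')\rangle,
\end{equation*}
which is manifestly symmetric in $N, N'$ and non-positive on the diagonal, vanishing iff $\pi_A(N) = 0$, i.e.\ iff $N \in \mathrm{span}(M_A)$. This simultaneously establishes self-adjointness of $L_A$ with respect to $\tr(PQ)$, negative semi-definiteness, and the identification of the kernel.

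The only step requiring any cleverness is the trace identity $\tr(M^{1/2}NMN) = \tr((M^{1/4}NM^{1/4})^2)$ used for the negative definiteness of $H$; the rest is orthogonal projection applied twice, first to interpret the combination in (\ref{L}) geometrically and then to confirm self-adjointness of $L_A$ in the \emph{original} inner product despite $L_A$ being most naturally described in terms of $\langle\cdot,\cdot\rangle$.
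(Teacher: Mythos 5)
Your overall route is exactly the paper's: prove that $H$ from (\ref{H}) is self-adjoint and negative definite, deduce the same for $G_A=H^{-1}$, introduce $\langle P,Q\rangle=-\tr(G_A(P)Q)$, recognise the bracket in (\ref{L}) as the $\langle\cdot,\cdot\rangle$-orthogonal projection away from $M_A$, and read off self-adjointness, semi-definiteness and the kernel; these are precisely the verifications the paper leaves to the reader. However, the one computation you flag as the crux is false as stated. With $Q=M^{1/4}NM^{1/4}$, cyclicity of the trace gives $\tr(Q^2)=\tr(M^{1/2}NM^{1/2}N)$, which is \emph{not} $\tr(M^{1/2}NMN)$ in general: for $M=\mathrm{diag}(1,4)$ and $N=\bigl(\begin{smallmatrix}0&1\\1&0\end{smallmatrix}\bigr)$ one gets $\tr(M^{1/2}NM^{1/2}N)=4$ while $\tr(M^{1/2}NMN)=6$. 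So the identity you use to prove negative definiteness of $H$ fails, and as written that step does not go through.

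The conclusion is nevertheless correct and the repair is one line: factor asymmetrically. Set $R=M^{1/4}NM^{1/2}$, so $R^{T}=M^{1/2}NM^{1/4}$ and, by cyclicity, $\tr(RR^{T})=\tr(M^{1/4}NMNM^{1/4})=\tr(M^{1/2}NMN)$. Hence $\tr(H(N)N)=-2\tr(RR^{T})=-2\|R\|^2<0$ for $N\ne 0$, since $M^{1/4}$ and $M^{1/2}$ are invertible; equivalently, invoke the general fact that $\tr(ANBN)=\|A^{1/2}NB^{1/2}\|^2>0$ for $A,B$ positive definite and $N$ symmetric non-zero, here with $A=M^{1/2}$, $B=M$. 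Everything else in your argument is sound: the self-adjointness of $H$ via cyclicity and transpose-invariance, the identity $H(-\tfrac12 M^{-1/2})=M$ giving $G_A(M_A)=-\tfrac12 M_A^{-1/2}$, the computation $\langle M_A,M_A\rangle=|\Lambda|/2$ from the normalisation $\tr\sqrt{M_A}=|\Lambda|$, the identification $L_A=G_A\circ\pi_A$, and the final bookkeeping $\tr(L_A(N)N')=-\langle\pi_A(N),\pi_A(N')\rangle$. With the corrected positivity argument the proof is complete and coincides with the paper's intended one.
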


\begin{remark}
Again, this all becomes much simpler when $M_A$ is a multiple of the identity. In this case, $G_A(N)=-27(2|\Lambda|^3)^{-1}N$ is just a multiple of $N$ and $L_A(N)$ is $-27(2|\Lambda|^3)^{-1}$ times the projection onto the trace-free part of $N$.
\end{remark}
 
The final ingredient we will need is an explicit formula for $\delta^*_A$.

\begin{lemma}\label{adjoint_delta}
The map $C^\infty(S^2E) \to \Omega^1(X,E)$ given by $N \mapsto -2*\left( \diff_A N \wedge F_A\right)$ is the $L^2$-adjoint of $\delta_A$, where here the symbol $\wedge$ denotes the operation $\left(\Lambda^1 \otimes \End(E)\right) \otimes \left(\Lambda^2 \otimes E\right) \to \Lambda^3 \otimes E$ given by tensoring the wedge product on forms with the natural action of $\End(E)$ on $E$. 
\end{lemma}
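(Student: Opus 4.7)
The plan is a direct integration-by-parts argument. Starting in a local oriented orthonormal frame $e_1,e_2,e_3$ for $E$, I would unfold the defining formula for $\delta_A a$ and use symmetry of $N\in S^2E$ together with the graded commutativity of the wedge of two 2-forms to obtain
\[
\int_X \tr\bigl(N\cdot\delta_A a\bigr)\,\nu_A
= 2\int_X \sum_{i,j} N_{ij}\, F_j\wedge (d_A a)_i .
\]
The key observation is that the local sum on the right is the frame-independent pairing $(N\cdot F_A)\wedge_E (d_A a)$, where $N$ acts on the $E$-factor of $F_A$ and $\wedge_E$ denotes the wedge of $E$-valued forms using the fibrewise inner product. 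Hence
\[
\int_X \langle N,\delta_A a\rangle\,\nu_A = 2\int_X (N\cdot F_A)\wedge_E d_A a .
\]

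Next I would integrate by parts. Because $N\cdot F_A$ is an $E$-valued 2-form, the graded Leibniz rule gives
\[
d\bigl((N\cdot F_A)\wedge_E a\bigr) = d_A(N\cdot F_A)\wedge_E a + (N\cdot F_A)\wedge_E d_A a ,
\]
so by Stokes,
\[
\int_X (N\cdot F_A)\wedge_E d_A a = -\int_X d_A(N\cdot F_A)\wedge_E a .
\]
Expanding $d_A(N\cdot F_A) = (d_A N)\cdot F_A + N\cdot d_A F_A$ and invoking the Bianchi identity $d_A F_A=0$, the second term vanishes. What remains, $(d_A N)\cdot F_A$, is precisely the operation $(\Lambda^1\otimes\End(E))\otimes(\Lambda^2\otimes E)\to\Lambda^3\otimes E$ described in the lemma, i.e.\ $d_A N\wedge F_A$.

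To finish I would convert the $E$-valued wedge pairing into an $L^2$ pairing against $\nu_A$. For any $E$-valued 3-form $\beta$ and $E$-valued 1-form $a$ on an oriented Riemannian 4-manifold one has, componentwise (using $**=-1$ on 1-forms and 3-forms and the defining identity $\alpha\wedge *\alpha = |\alpha|^2\nu_A$), the identity $\beta\wedge_E a = \langle *\beta,a\rangle\,\nu_A$. Applying this with $\beta = d_A N\wedge F_A$ yields
\[
\int_X\langle N,\delta_A a\rangle\,\nu_A
= -2\int_X (d_A N\wedge F_A)\wedge_E a
= \int_X \bigl\langle -2*(d_A N\wedge F_A),\, a\bigr\rangle\,\nu_A,
\]
which is exactly the claimed formula for $\delta_A^*$. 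The only step with any subtlety is the Hodge-star sign bookkeeping in the last identity; everything else is the Leibniz rule plus Bianchi, so no serious obstacle is expected.
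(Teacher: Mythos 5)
Your argument is correct and is essentially the paper's own proof: both hinge on the Bianchi identity giving $\diff_A N\wedge F_A=\diff_A(N\cdot F_A)$ followed by integration by parts against $a$, using the symmetry of $N$ to identify $\tr(N\,\delta_A a)$ with the relevant pairing. The only cosmetic difference is that the paper packages the integration by parts via self-duality of $F_A$ and the identity $\diff_A^*=-*\diff_A*$ on $2$-forms, whereas you run Stokes' theorem directly and invoke the Hodge star only at the end; the sign bookkeeping you flag does check out with the paper's convention $\beta\wedge *\gamma=(\beta,\gamma)\,\dvol$.
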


\begin{remark} Our convention for the Hodge star is that $\beta \wedge 
*\gamma = (\beta, \gamma)\,\dvol$, which means that the codifferential (on any degree form on a four-manifold) is given by $\diff^* =  - * \diff *$.
\end{remark}
\begin{proof}
First note that, since $\diff_A F_A = 0$, $\diff_A N \wedge F_A = \diff_A (N\cdot F_A)$ where the symbol $\cdot$ here denotes the action of $N$ on the $E$-factor of $F_A \in \Lambda^2 \otimes E$. Next note that since $F_A$ is self-dual, $-2*\diff_A (N \cdot F_A) = 2 \diff_A^*(N \cdot F_A)$. We now see that
\[
\int_X \left(-2*( \diff_A N \wedge F_A), a\right) \nu_A
=
\int_X \left(N \cdot F_A, 2\diff_A a\right) \nu_A.
\]
Write $\diff_A a = \sum  \alpha_i \otimes e_i$, then 
\[
(N \cdot F_A, 2\diff_A a) 
=
2\sum_{i,j,k}\left( N_{ij} F_i \otimes e_j, \alpha_k \otimes e_k\right)
=
2\sum_{i,j} N_{ij} (F_i,\alpha_j)
=
\tr(N \delta_A a)
\]
which completes the proof.
\end{proof}

We can now give a precise description of the leading order part of the Hessian of $S$. As a warm up, and to motivate the general case, we first compute the Hessian of $S$ at a point $A$ for which $M_A$ is a multiple of the identity.

\begin{proposition}\label{instanton_Hessian}
When $M_A$ is a multiple of the identity (so that $g_A$ is anti-self-dual Einstein and $S(A) = 2\chi + 3\tau$ attains a global maximum), the Hessian of $S$ is given by
\begin{equation}\label{hess-instanton-formula}
D = \frac{|\Lambda|}{12\pi^2}\,\delta_A^* \circ L_A \circ \delta_A.
\end{equation}
\end{proposition}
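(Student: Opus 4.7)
The plan is to compute the Hessian by differentiating the first variation formula along the line $A + ta$ and to exploit simplifications afforded by the hypothesis $M_A = (\Lambda/3)^2 I_E$. Recall that combining $\dot S = (\Lambda^2/12\pi^2)\int \dot\nu_A$ with (\ref{dot_nu}) gives the general first variation
\[
\dot S|_B(a) = \frac{|\Lambda|}{12\pi^2}\int_X \tr\bigl(M_B^{-1/2}\,\delta_B a\bigr)\,\nu_B
\]
at any definite connection $B$. Since $A$ is a critical point of $S$ (the paper observes that $M_A$ being a multiple of the identity forces $\diff_A\Phi_A = 0$ by Bianchi), we have $H(a,a) = \tfrac{d}{dt}\big|_{t=0} \dot S|_{A+ta}(a)$. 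Differentiating under the integral produces three contributions via the product rule: (I) from varying $M_B^{-1/2}$, (II) from varying $\delta_B a$, and (III) from varying $\nu_B$.

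Lemma \ref{MA-1/2} disposes of (I) immediately: the derivative of $M_B^{-1/2}$ in direction $a$ is $L_A(\delta_A a)$, so (I) reads $\int \tr\bigl(L_A(\delta_A a)\,\delta_A a\bigr)\,\nu_A$, which by the adjoint formula Lemma \ref{adjoint_delta} equals $\int (\delta_A^* L_A \delta_A a,\,a)\,\nu_A$. This is precisely (\ref{hess-instanton-formula}) multiplied by $|\Lambda|/12\pi^2$. All the work therefore lies in establishing (II)+(III)$=0$. Here the hypothesis enters crucially: $M_A^{-1/2} = (3/|\Lambda|)\,I_E$ is scalar, so both residual terms collapse to traces. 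Using (\ref{dot_nu}) one gets $\dot\nu_A = (3/\Lambda^2)(\tr \delta_A a)\,\nu_A$ and hence $\text{(III)} = (9/|\Lambda|^3)\int (\tr \delta_A a)^2 \,\nu_A$. For (II) I would expand $F_{A+ta} = F_A + t\diff_A a + \tfrac{t^2}{2}[a\wedge a]$ and $\diff_{A+ta}a = \diff_A a + t[a\wedge a]$, combine with the expansion of $\nu_{A+ta}$, and extract the linear part to obtain
\[
\frac{d}{dt}\bigg|_{t=0}(\delta_{A+ta}a) = 2Q - \frac{3}{\Lambda^2}(\tr \delta_A a)\,\delta_A a,
\]
where $Q_{ij}\,\nu_A = (\diff_A a)_i\wedge (\diff_A a)_j + \tfrac{1}{2}\bigl([a\wedge a]_i\wedge F_j + F_i\wedge [a\wedge a]_j\bigr)$. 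Taking the trace and multiplying by $3/|\Lambda|$ gives $\text{(II)} = (6/|\Lambda|)\int \tr Q\,\nu_A - (9/|\Lambda|^3)\int (\tr \delta_A a)^2\,\nu_A$, so the $(\tr \delta_A a)^2$ contributions in (II) and (III) cancel identically.

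It remains to show that the bulk piece $\int \tr Q\,\nu_A$ vanishes, and this is the one topological ingredient in the argument. Note that $\tr M_\nu(A+ta)\,\nu = \sum_i F_{A+ta,i}\wedge F_{A+ta,i}$, whose integral is the $t$-independent Pontryagin number $4\pi^2 p_1(E)[X]$; extracting the coefficient of $t^2$ yields exactly $\int \tr Q\,\nu_A = 0$. Thus (II)+(III)$=0$, only (I) survives, and $H(a,a) = (|\Lambda|/12\pi^2)\int (\delta_A^* L_A \delta_A a, a)\,\nu_A$, giving the claimed formula for $D$. The main technical obstacle is bookkeeping in the expansion of $\delta_B a$, since $\delta$ depends on $A$ through both $F_A$ and $\diff_A$; conceptually, however, the structure is clean — all the "non-$L_A$" second variation of the volume is absorbed either by a pointwise cancellation or by the topological constancy of $\int F\wedge F$.
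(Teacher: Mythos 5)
Your proposal is correct and follows essentially the same route as the paper: differentiate the first variation of $S$ at $A$, use Lemma \ref{MA-1/2} to see that the variation of $M_A^{-1/2}$ produces exactly $\frac{|\Lambda|}{12\pi^2}\,\delta_A^*\circ L_A\circ\delta_A$, and check that the residual terms quadratic in $\diff_A a$ and $[a\wedge a]$ vanish. The only cosmetic difference is at the last step, where the paper kills these terms by a direct integration by parts while you invoke the $t$-independence of $\int_X \tr\left(F_{A+ta}\wedge F_{A+ta}\right)$ — which is the same computation packaged as Chern--Weil invariance — and your extra $(\tr\delta_A a)^2$ terms simply reflect the redundant splitting of $(\delta_B a)\,\nu_B$ into $\delta_B a$ and $\nu_B$ before they cancel.
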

\begin{proof}
Recall from the proof of Theorem \ref{EL_equations} that the derivative of $S$ in the direction $a$ is given by
\[
\diff S(a) = \frac{\Lambda}{6\pi^2}\int_X (\Phi_A, \diff _Aa) \nu_A
\]
where $\Phi_A = \pm F_A \circ M_A^{-1/2}$ (with sign corresponding to that of the definite connection). Writing the integrand in terms of a local frame for $E$, this reads
\[
\diff S(a)
=
\frac{|\Lambda|}{6\pi^2} \int \sum_{i,j} (M_A^{-1/2})_{ij} F_i\wedge d_A a_j.
\]
Differentiating with respect to $A$ in the direction $b$ gives
\[
d^2S(a,b) = \frac{|\Lambda|}{6\pi^2} \int  \left[ \sum_{i,j} L_{ij}(\delta_A b) F_i\wedge d_A a_j + \frac{3}{|\Lambda|} \sum_i \left( d_A b_i \wedge d_A a_i + F_i [b,a]_i\right)\right],
\]
where in the last term we have used the fact that $M_A$ is a multiple of the identity. It is now a matter of simple integration by parts in the last term to see that it vanishes. 
\end{proof}

In the general case, a similar calculation shows that the Hessian has the same principal part as in the anti-self-dual Einstein case. 

\begin{proposition}\label{leading-order_Hessian}
At an arbitrary definite connection $A$, the principal part of the Hessian of $S$ is 
\begin{equation}\label{hess-princ-part}
\hat{D} = \frac{|\Lambda|}{12\pi^2}\,\delta_A^* \circ L_A \circ \delta_A.
\end{equation}
\end{proposition}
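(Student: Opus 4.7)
The strategy is to mimic the instanton-case computation of Proposition~\ref{instanton_Hessian}, keeping track of the extra terms that arise when $M_A$ is not a multiple of the identity and showing that none of these is genuinely second order in $a$. First I would rewrite the first variation from Theorem~\ref{EL_equations} in a form adapted to $\delta_A$: unpacking $(\Phi_A, d_A a)\,\nu_A$ in a local frame and using $F_i\wedge\alpha = (F_i,\alpha)\,\nu_A$ together with the symmetry of $M_A^{-1/2}$ gives
\[
dS(a) = \frac{|\Lambda|}{12\pi^2}\int_X \tr\bigl(M_A^{-1/2}\,\delta_A a\bigr)\,\nu_A.
\]
Differentiating in the direction $b$ produces three contributions, according to whether the $b$-derivative falls on $M_A^{-1/2}$, on $\delta_A a$, or on $\nu_A$.

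The contribution from $M_A^{-1/2}$ is the principal one. By Lemma~\ref{MA-1/2}, $\dot{(M_A^{-1/2})} = L_A(\delta_A b)$, so it equals $\frac{|\Lambda|}{12\pi^2}\int \tr(L_A(\delta_A b)\,\delta_A a)\,\nu_A$; the self-adjointness of $L_A$ (Lemma~\ref{L_semi-definite}) together with Lemma~\ref{adjoint_delta} then rewrites this as $\frac{|\Lambda|}{12\pi^2}\int (\delta_A^* L_A \delta_A a, b)\,\nu_A$, which is exactly $\int(\hat D a, b)\,\nu_A$ for the claimed $\hat D$. When $M_A$ is a multiple of the identity this reduces to the principal term already identified in Proposition~\ref{instanton_Hessian}.

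It remains, and this is where I expect the main technical work, to show that the other two contributions assemble into an operator of order at most one in $a$, hence do not enter the principal symbol. Differentiating the defining formula for $\delta_A a$, using $\dot F_A = d_A b$ and $\dot{(d_A a)} = [b,a]$, produces a piece proportional to $(\delta_A a)\,\dot\nu_A$ which exactly cancels the $\dot\nu_A$ contribution, just as in the instanton case. What remains is the integral of
\[
R(a,b) = 2\sum_{i,j}(M_A^{-1/2})_{ij}\bigl[(d_A b)_i \wedge (d_A a)_j + [b,a]_i \wedge F_j\bigr].
\]
To show that $R$ produces only a first-order operator on $a$ I would integrate by parts to move the differential in $d_A b$ off $b$: the identity $d^2 = 0$ forces the apparently second-order piece $(d_A b)_i\wedge(d_A a)_j$ to collapse, so that after integration only terms of the form $d(M_A^{-1/2})\wedge d_A a$ and $A\wedge d a$ survive, each of which is at most first order in $a$, whilst the Bianchi identity $d_A F_A = 0$ handles the $[b,a]\wedge F_A$ piece analogously. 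The delicate point is the bookkeeping of signs and of the $\epsilon_{ijk}$ coefficients coming from the bracket on $\so(E)\cong E$, which must conspire to cancel the would-be leading piece; once verified, $\hat D$ is as claimed.
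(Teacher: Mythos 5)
Your plan is correct and takes essentially the same route as the paper: the principal part is isolated from the first variation by linearising $A\mapsto M_A^{-1/2}$ via Lemma~\ref{MA-1/2}, and then rewritten as $\frac{|\Lambda|}{12\pi^2}\,\delta_A^*\circ L_A\circ\delta_A$ using the self-adjointness of $L_A$ (Lemma~\ref{L_semi-definite}) and the adjoint formula of Lemma~\ref{adjoint_delta}. The only difference is cosmetic: the paper differentiates the gradient form $\diff S(a)=\frac{\Lambda}{6\pi^2}\int(\diff_A^*\Phi_A,a)\,\nu_A$, using $\diff_A^*\Phi_A=\pm\tfrac12\delta_A^*\bigl(M_A^{-1/2}\bigr)$ and counting derivative orders in $b$, whereas you differentiate the symmetric form $\frac{|\Lambda|}{12\pi^2}\int\tr\bigl(M_A^{-1/2}\delta_A a\bigr)\nu_A$ and dispose of the cross terms by integration by parts (where the collapse comes from $\diff_A^2$ being a zeroth-order curvature term rather than literally $\diff^2=0$, and the $[b,a]\wedge F_A$ piece is already zeroth order), which is exactly the generalisation of the paper's own warm-up computation in Proposition~\ref{instanton_Hessian}.
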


\begin{proof}
Recall from the proof of Theorem \ref{EL_equations} that the derivative of $S$ in the direction $a$ is given by
\begin{equation}\label{gradient}
\diff S(a)
=
\frac{\Lambda}{6\pi^2}\int_X (\diff_A^*\Phi_A, a) \nu_A
\end{equation}
where $\Phi_A = \pm F_A \circ M_A^{-1/2}$ (with sign corresponding to that of the definite connection). We now differentiate this with respect to $A$ in the direction $b$. 

The $L^2$-innerproduct here depends on $A$, but the infinitesimal change in metric is \emph{first}-order in $b$. Since we are only interested in the principal part, we can ignore this. The only piece which is second-order in $b$ comes from $\diff_A^*\Phi_A$. In the notation of Lemma \ref{adjoint_delta}, one can check that
\[
\diff_A^* \Phi_A = \pm\frac{1}{2} \delta_A^* \left(M_A^{-1/2}\right).
\]
Again, the dependence of $\delta_A^*$ on $A$ is first order. This means that we need only ultimately consider the linearisation of $A \mapsto M_A^{-1/2}$ in the direction of $b$ (which will then be differentiated again by $\delta_A^*$, becoming second order). The proposition now follows from Lemma \ref{MA-1/2}.
\end{proof}

We next turn to the symbol of $D$. Let $\eta$ be a non-zero covector and write 
\[
\sigma(D,\eta) \colon \Lambda^1 \otimes E \to \Lambda^1 \otimes E
\]
for the symbol of $D$ in the direction $\eta$. 

\begin{proposition}\label{symbol_Hessian}
The symbol $\sigma(D,\eta)$ is negative semi-definite with kernel equal to
\[
\ker \sigma(D,\eta)
=
\{ \eta \otimes e + \iota_uF_A : e \in E,\ u \in TX\}
\]
which is the span of the images of the symbols of $\diff_A$ and $f_A$. (Recall $f_A \colon TX \to \Lambda^1 \otimes E$ is the map $f_A(u) = \iota_uF_A$.) 
\end{proposition}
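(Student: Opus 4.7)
The plan is to exploit the factored form of the principal part. Since $D$ differs from $\hat D = \tfrac{|\Lambda|}{12\pi^2}\,\delta_A^*\circ L_A\circ\delta_A$ of Proposition \ref{leading-order_Hessian} only by lower-order terms, the two operators share a symbol:
\[
\sigma(D,\eta) \;=\; \tfrac{|\Lambda|}{12\pi^2}\,\sigma(\delta_A,\eta)^*\circ L_A\circ\sigma(\delta_A,\eta).
\]
Pairing with $a\in\Lambda^1\otimes E$ and using self-adjointness of $L_A$ gives
\[
\langle\sigma(D,\eta)a,a\rangle \;=\; \tfrac{|\Lambda|}{12\pi^2}\,\langle L_A\sigma(\delta_A,\eta)a,\,\sigma(\delta_A,\eta)a\rangle,
\]
and Lemma \ref{L_semi-definite}, which asserts that $L_A$ is negative semi-definite with $\ker L_A=\R M_A$, simultaneously yields the negative semi-definiteness of $\sigma(D,\eta)$ and the identification $\ker\sigma(D,\eta)=\sigma(\delta_A,\eta)^{-1}(\R M_A)$.

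Next I compute $\sigma(\delta_A,\eta)$ by replacing $\diff_A a_k$ with $\eta\wedge a_k$ in the defining formula, obtaining
\[
\sigma(\delta_A,\eta)(a)_{ij}\,\nu_A \;=\; \eta\wedge\bigl(a_i\wedge F_j + F_i\wedge a_j\bigr),
\]
and verify the inclusion on the two generating families. For $a=\eta\otimes e$ each $a_k$ is proportional to $\eta$, so $\sigma(\delta_A,\eta)(\eta\otimes e)=0$. For $a=\iota_u F_A$ the Leibniz rule for $\iota_u$ turns the right hand side into $\eta\wedge\iota_u(F_i\wedge F_j)=M_{ij}\,\eta\wedge\iota_u\nu_A$, while the identity $\iota_u(\eta\wedge\nu_A)=0$ rearranges to $\eta\wedge\iota_u\nu_A=\eta(u)\,\nu_A$, giving $\sigma(\delta_A,\eta)(\iota_u F_A)=\eta(u)\,M_A\in\R M_A$. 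Thus the asserted span is contained in the kernel. The sum is direct of total dimension $3+4=7$: a relation $\iota_u F_A=\eta\otimes e$ would make every $\iota_u F_k$ proportional to $\eta$, hence $F_A(u,v)=0$ for all $v\in\ker\eta$, and the definiteness of $A$ forces $u=0$ and then $e=0$.

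For the reverse inclusion it is enough to show that $\sigma(\delta_A,\eta)\colon\Lambda^1\otimes E\to S^2E$ is surjective: since $M_A$ has already been seen to lie in the image, the preimage of $\R M_A$ would then have dimension $\dim\ker\sigma(\delta_A,\eta)+1=6+1=7$, matching the stated span. This surjectivity is the main obstacle, though entirely elementary, and is best tackled by choosing a good frame. Diagonalise $M_A$ in an orthonormal frame $e_1,e_2,e_3$ of $E$ with eigenvalues $\lambda_i^2>0$, and pick an adapted orthonormal coframe $e^0=\eta/|\eta|, e^1, e^2, e^3$ of $T^*X$; then $F_i=\varepsilon\lambda_i\omega_i$, where $\omega_1,\omega_2,\omega_3$ is the associated standard orthonormal basis of $\Lambda^+_A$ and $\varepsilon=\pm1$ is the sign of $A$. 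The identities $e^0\wedge e^m\wedge\omega_j=\delta^m_j\,\nu_A$ reduce the symbol to
\[
\sigma(\delta_A,\eta)(a)_{ij} \;=\; \varepsilon\bigl(\lambda_j\alpha^j_i+\lambda_i\alpha^i_j\bigr),
\]
where $a_k=\alpha_k^\mu e^\mu$ and only the spatial components $\alpha_k^m$ ($m=1,2,3$) contribute. Since each $\lambda_i>0$, every symmetric $3\times3$ matrix is hit: the diagonal entries determine $\alpha^i_i$, and each off-diagonal entry is a single linear equation in the two unknowns $(\alpha^j_i,\alpha^i_j)$. Surjectivity follows and the dimension count closes the argument.
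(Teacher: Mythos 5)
Your proposal is correct and follows essentially the same route as the paper: the symbol is read off from the principal part $\delta_A^*\circ L_A\circ\delta_A$, negative semi-definiteness and the identification $\ker\sigma(D,\eta)=\sigma(\delta_A,\eta)^{-1}(\R M_A)$ come from Lemma \ref{L_semi-definite}, the two generating families are checked to lie in the kernel by the same contraction identity, and equality follows from surjectivity of $\sigma(\delta_A,\eta)$ plus a dimension count. The only difference is cosmetic: you prove surjectivity by an explicit adapted-frame computation (where your normalisations, e.g.\ the missing factor of $|\eta|$ and the $\sqrt{2}$ in $e^0\wedge e^m\wedge\omega_j$, are harmless), whereas the paper exhibits $\delta_\eta$ as a composition of manifestly surjective maps; you also make explicit the directness of the sum $\{\eta\otimes e\}\oplus\{\iota_uF_A\}$, which the paper leaves implicit.
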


\begin{proof}
Write $\delta_\eta$ for the symbol of $\delta_A$ in the direction $\eta$. Then by Proposition \ref{leading-order_Hessian}, $\sigma(D,\eta) = (|\Lambda|/12\pi^2) \delta^*_\eta \circ L_A \circ \delta_\eta$ and this is negative semi-definite by Lemma \ref{L_semi-definite}. Moreover, the kernel of $\sigma(D,\eta)$ is precisely those $a$ for which $\delta_\eta(a)$ is a multiple of $M_A$ (possibly zero multiple).  

Since the action is gauge invariant, $D \circ R_A = 0$ or, in other words, $D \circ \diff_A = 0 = D \circ f_A$. This means that the images of the symbols of $\diff_A$ and $f_A$ must lie in the kernel. Alternatively, we can check directly that $\sigma(D,\eta)(\eta \otimes e + \iota_u F_A)=0$. The term $\eta \otimes e$ is killed immediately since $\Sigma_\eta$ begins by wedging with $\eta$. For the second term, we work in a local frame for $E$ in which $F_A = \sum F_ i\otimes e_i$. We have $\Sigma_\eta(\iota_uF_A) = (\eta \wedge \iota_u F_i, F_j) + (\eta \wedge \iota_u F_j, F_i)$. Now  the 5-form $\eta \wedge F_i \wedge F_j$ is necessarily zero. It follows that 
\[
0 
= 
\iota_u(\eta \wedge F_i \wedge F_j)
=
\iota_u \eta \wedge F_i \wedge F_j -
\eta \wedge \iota_u F_i \wedge F_j - 
\eta \wedge F_i \wedge \iota_u F_j
\]
Dividing by the volume form gives that $\delta_\eta(\iota_uF_A) = \eta(u)M_A$ is a multiple of $M_A$.  

It remains to check that we have accounted for the entire kernel, i.e., that it has dimension~7. The map $\delta_\eta \colon \Lambda^1 \otimes E \to S^2E$ is surjective. Indeed it is, up to a factor of 2, the composition of the  surjective operations: wedge with $\eta$ followed by self-dual projection  $\Lambda^1 \otimes E \to \Lambda^+ \otimes E$; identification of $\Lambda^+ \otimes E$ and $E \otimes E$ via $F_A$ and finally projection $E\otimes E \to S^2E$. This means that those $a$ sent to a multiple of $M_A$ by $\delta_\eta$---the kernel of $\sigma(D,\eta)$---form a 7-dimensional subspace of $\Lambda^1 \otimes E$. 
\end{proof}

\begin{proof}[The proof of Theorem \ref{Hessian_elliptic}]

We now prove that $D$ is elliptic modulo gauge, with finitely many positive eigenvalues. Recall that if $\xi \in C^\infty(X, E)$ and $u \in C^\infty(X,TX)$, the infinitesimal gauge action $R_A(\xi,u) = -\diff_A \xi - \iota_uF_A$ is first order in $\xi$ and zeroth order in $u$. To fix for this we will treat the two parts separately. Write $W_A \leq \Lambda^1\otimes E$ for the orthogonal complement of the image map $f_A$ (where $f_A(u) = \iota_uF_A$), i.e., $W_A = \ker f_A^*$. Write $\Pi \colon \Lambda^1 \otimes E \to W_A$ for the orthogonal projection onto $W_A$. Since $D \circ f_A = 0$ and $D$ is self-adjoint we see that $D$ restricts to a map $C^\infty(W_A) \to C^\infty(W_A)$. We will consider the gauge fixed operator $D' \colon C^\infty(W_A) \to C^\infty(W_A)$ given by
\begin{equation}\label{D'}
D'(\Pi a) = D (\Pi a)- \Pi( \diff_A \diff_A^* \Pi a).
\end{equation}
On the orthogonal complement $\ker R_A^* = \ker \diff_A^* \cap W_A$ of the gauge group action, $D'=D$. With Proposition \ref{symbol_Hessian} in hand, it is straight forward to check that $D'$ is genuinely elliptic. Its symbol in the direction of a unit length covector $\eta$ is
\[
\sigma(D',\eta) 
= 
\frac{|\Lambda|}{12\pi^2} \delta_\eta^*L_A\delta_\eta 
- 
\Pi p_\eta \Pi.
\]
where $p_\eta \colon \Lambda^1 \otimes E \to \Lambda^1 \otimes E$ is given by tensoring $1_E$ with the orthogonal projection from $\Lambda^1$ onto the span of $\eta$. 

Suppose that $\sigma(D',\eta)(a)= 0$ for some $a \in W_A$. The symbol of $D'$ is a sum of negative semi-definite operators and so $a$ is in the kernel of both terms. By Proposition \ref{symbol_Hessian}, it follows that $a = \eta \otimes e + f_A(u)$ for some $e \in E$ and $u \in TX$, but now $p_\eta(a) = 0$ forces $e=0$ and hence $a \in \im f_A$, But since $a \in W_A$ is also orthogonal to $\im f_A$, this forces $a =0$ and so $\sigma(D',\eta)$ is negative definite and in particular an isomorphism. 

We have shown that $D'$ is elliptic and so  it has a complete basis of eigenvectors. It follows that the restriction of $D$ to $\ker R_A^* = \ker d_A^*  \cap W_A$ also has a complete basis of eigenvectors (a sub-collection of the eigenvectors of $D'$). It remains to show that $D'$ has finitely many positive eigenvalues, but this follows from the fact that its symbol is negative definite. In our case we can see this directly: replacing $D$ by $\Pi \circ \hat{D} = \frac{|\Lambda|}{12\pi^2}\Pi \delta_A^* L_A \delta_A$ in (\ref{D'}) gives an elliptic operator $\hat{D}'$ (with the same symbol) which is negative semi-definite and so has non-positive spectrum. Now $D$ and $\hat{D}$ can be joined by a smooth path, leaving the principal part alone and interpolating between the lower order pieces. Using this path in place of $D$ in (\ref{D'}) gives a path of elliptic operators (all with the same symbol).  Along this path the spectrum---a discrete subset of $\R$---varies continuously. Since it is non-positive at one end of the path, continuity implies only finitely many eigenvalues can become positive at the other end.
\end{proof}

\subsection{The Hessian as a spin Laplacian} 

When $M_A$ is a multiple of the identity, so that $g_A$ is anti-self-dual Einstein, the action $S(A)$ attains a global maximum and so the Hessian is non-positive. In this section we will show that in fact the Hessian is invertible modulo gauge. This implies that anti-self-dual Einstein metrics with non-zero scalar curvature are locally rigid modulo diffeomorphisms. 

This local rigidity was originally proved for positive scalar curvature by LeBrun \cite{LeBrun1988A-rigidity-theo} and negative scalar curvature by Horan \cite{Horan1996A-rigidity-theo}. In the traditional twistor-theoretic point of view they employ, one uses the Penrose transform to identify infinitesimal anti-self-dual Einstein deformations with the certain twisted harmonic spinors. One must then show that the only such harmonic spinors are zero. Our new formulation of the problem gives an alternative way to interpret the appearance of this Dirac operator: modulo gauge, the Hessian is the corresponding spin Laplacian. 

When discussing gauge-fixing, we defined a sub-bundle $W_A$ of $\Lambda^1 \otimes E$ which was orthogonal to the action $\im f_A$ of tangent vectors. Recall from Remark \ref{spin_gauge} that when $M_A$ is a multiple of the identity, $\Phi_A$ identifies $W_A$ with the twisted spin bundle $S_-\otimes S_+^3 \subset \Lambda^1 \otimes \Lambda^+$. This bundle carries a Dirac operator defined by coupling to the Levi--Civita connection on $S_+^3$, which we denote 
\[
\dirac \colon C^\infty(S_-\otimes S_+^3) \to C^\infty(S_+ \otimes S_+^3)
\]
We will show that $D$ is essentially $\dirac^*\dirac$. To compare these operators, we use $\Phi_A$ to identify $\Lambda^1 \otimes E \to \Lambda^1 \otimes \Lambda^+$. Given $a \in \Lambda^1 \otimes E$, we write $\hat{a}$ for its image in $\Lambda^1 \otimes \Lambda^+$.
 
\begin{proposition}\label{dirac_laplacian}
Let $A$ be a definite connection with $M_A$ a multiple of the identity. Let $a \in \Omega^1(X, E)$ be an infinitesimal deformation of $A$ orthogonal to the gauge group action, i.e., $a \in C^\infty(W_A)$ with $\diff_A^*a=0$. Then 
\[
\widehat{D(a)} =  - c\,\dirac^*\dirac (\hat{a})
\]
for some positive constant $c$, which depends only on $|\Lambda|$.
\end{proposition}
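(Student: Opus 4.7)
The plan is to combine the explicit formula for the Hessian from Proposition~\ref{instanton_Hessian} with the spin-bundle decomposition of Remark~\ref{spin_gauge}, and match the result to the twisted Dirac Laplacian using representation-theoretic identifications of the involved bundles.

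First, I would specialise Proposition~\ref{instanton_Hessian}. When $M_A=(\Lambda/3)^2\,1_E$, the remark after Lemma~\ref{L_semi-definite} gives $L_A = -\frac{27}{2|\Lambda|^3}\,\pi_0$, where $\pi_0$ is the pointwise projection of $S^2E$ onto its trace-free part. Hence
\[
D = -\frac{9}{8\pi^2|\Lambda|^2}\,\delta_A^*\circ \pi_0 \circ \delta_A.
\]
Using $\Phi_A$ to identify $E$ with $\Lambda^+$ (under which $A$ becomes the Levi-Civita connection $\nabla$), the remark preceding Lemma~\ref{deriv_M-1/2} identifies $\delta_A$ with $2\,\pi_S\circ d_\nabla^+$, where $d_\nabla^+$ is the self-dual part of the covariant exterior derivative and $\pi_S\colon \Lambda^+\otimes\Lambda^+\to S^2\Lambda^+$ is symmetrisation. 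The target splits as $S^2\Lambda^+\cong S^2 S_+^2 = S_+^4\oplus \R$, with $\R$ the trace summand annihilated by $\pi_0$.

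Second, I would invoke Remark~\ref{spin_gauge} to decompose the domain $\Lambda^1\otimes E \cong \Lambda^1\otimes\Lambda^+$ as $\Lambda^1 \oplus W$ with $W=S_-\otimes S_+^3$; the first summand is precisely $\im f_A$, so our gauge-fixed $a$ corresponds to $\hat a \in C^\infty(W)$. The twisted Dirac operator
\[
\dirac \colon C^\infty(S_-\otimes S_+^3) \to C^\infty(S_+\otimes S_+^3) = C^\infty(S_+^4 \oplus S_+^2)
\]
splits into two components $\dirac = \dirac_0 \oplus \dirac_1$. The key algebraic claim is that under the identifications above, $\dirac_0$ agrees, up to a universal constant depending only on the representation theory, with $\pi_0\circ\delta_\nabla$ restricted to $W$, while $\dirac_1$ agrees (again up to a universal constant) with $d_\nabla^*$ restricted to $W$. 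Both claims reduce to showing that two first-order invariant operators from an irreducible bundle to an irreducible bundle must be proportional, since by Schur there is only a one-dimensional space of such symbols. Concretely, both $\dirac_0$ and $\pi_0\circ\delta_\nabla$ can be described as covariant derivative followed by the unique (up to scalar) $SO(4)$-equivariant projection $\Lambda^1\otimes S_-\otimes S_+^3\to S_+^4$, and similarly for $\dirac_1$ versus $d_\nabla^*$.

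Granting these identifications, the final step is immediate: the gauge-fixing condition $d_A^*a=0$ translates to $d_\nabla^* \hat a = 0$, so $\dirac_1 \hat a = 0$, hence $\dirac^*\dirac\,\hat a = \dirac_0^*\dirac_0\,\hat a$, which matches $\pi_0 \delta_\nabla^* \pi_0 \delta_\nabla \hat a$ up to a constant; tracking this constant and the factor in front of $D$ yields the stated positive $c=c(|\Lambda|)$. The main obstacle will be the bookkeeping: precisely fixing the representation-theoretic normalisations so that the two components of $\dirac$ are correctly matched with $\pi_0\circ\delta_\nabla$ and $d_\nabla^*$ with the right relative constant, and then threading through the factors of $|\Lambda|$ coming from $L_A$ and from $\Phi_A = \pm F_A\circ M_A^{-1/2}$ so as to verify the positivity of $c$. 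The conceptual content---that the gauge-fixed Hessian \emph{is} a twisted Dirac Laplacian---is forced by invariance, but extracting the clean numerical constant requires a careful normalisation calculation.
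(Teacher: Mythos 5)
Your argument is essentially the paper's own proof: specialise the Hessian formula to $D \propto \delta_A^*\circ(\text{trace-free projection})\circ\delta_A$, identify this via $\Phi_A$ with $\diff_\nabla^*\circ p\circ\diff_\nabla$ on $S_-\otimes S_+^3$, decompose the target of $\dirac$ as $S_+^4\oplus S_+^2$ so that the gauge condition $\diff_\nabla^*\hat a=0$ kills the $S_+^2$-component, and conclude $\dirac^*\dirac\hat a=\diff_\nabla^*p\,\diff_\nabla\hat a$ up to a positive constant. The only difference is that you make explicit, via Schur's lemma on the multiplicity-one isotypic components, the identification of the two pieces of $\dirac$ with $p\circ\diff_\nabla$ and $\diff_\nabla^*$, which the paper simply asserts; this is a correct and welcome justification, not a different route.
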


\begin{proof}
Remark (\ref{spin_gauge}) explains that if $a \in W_A$ then  $\hat{a} \in C^\infty(S_-\otimes S^3_+)$ lies in the domain of the Dirac operator. Next, since that $\Phi_A$ identifies $A$ with the Levi-Civita connection $\nabla$, we see that $\diff_\nabla^*\hat{a} = 0$.

Now, equation (\ref{hess-instanton-formula}) for the Hessian of $S(A)$ when $M_A$ is a multiple of the identity shows that the Hessian is, up to multplication by a negative constant factor, the following composition
\begin{equation}\label{hess-asd-sequence}
\Omega^1(X, \Lambda^+) \stackrel{\diff_\nabla}{\longrightarrow} 
\Omega^2(X, \Lambda^+) \stackrel{p}{\longrightarrow}
\Omega^2(X, \Lambda^+) \stackrel{\diff_\nabla^*}{\longrightarrow}
\Omega^1(X, \Lambda^+)
\end{equation}
where $p$ is orthogonal projection from $\Lambda^2 \otimes \Lambda^+$ onto the subspace $S^2_0\Lambda^+ \subset \Lambda^ 2\otimes \Lambda^+$ of symmetric trace-free endomorphisms of $\Lambda^+$.

Meanwhile, the Dirac operator $\dirac$ can also be expressed in terms of $\diff_\nabla$ and $\diff_\nabla^*$. The range of $\dirac$ is $S_+\otimes S^3_+ \cong S^4_+ \oplus S^2_+$. Meanwhile the range of $\diff_\nabla$ is 
\[
\Lambda^2 \otimes \Lambda^+ 
\cong 
(S^2_- \otimes S^2_+) \oplus (S^2_+ \otimes S^2_+)
\cong
(S^2_- \otimes S^2_+) \oplus S^4_+ \oplus \R \oplus S^2_+
\]
and that of $\diff_\nabla^*$ is $\Lambda^+ \cong S^2_+$. The Dirac operator on $S_- \otimes S^3_+$ is built from the $S^4_+$ component of $\diff_\nabla$ and the whole of $\diff_\nabla^*$. 

Now projection of a section of $\Lambda^2 \otimes \Lambda^+$ onto the $S^4_+$ component is precisely the above projection $p$. So when $\diff_\nabla^*\hat{a} = 0$, $\dirac{\hat{a}} = p(\diff_\nabla \hat{a})$. It follows that $\dirac^*\dirac{\hat{a}} = (\diff_\nabla^* \circ p \circ \diff_\nabla) \hat{a}$ which is, up to a negative factor, $\widehat{D(a)}$. 
\end{proof}

\begin{corollary}[cf.\ LeBrun \cite{LeBrun1988A-rigidity-theo} when $\Lambda>0$ and Horan \cite{Horan1996A-rigidity-theo} when $\Lambda <0$]
When $M_A$ is a multiple of the identity, the Hessian of $S$ at $A$ is negative definite modulo gauge. It follows that anti-self-dual Einstein metrics with non-zero scalar curvature are locally rigid modulo diffeomorphism.
\end{corollary}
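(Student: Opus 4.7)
The plan is to combine Proposition \ref{dirac_laplacian} with a classical vanishing theorem for a twisted Dirac operator, and then upgrade infinitesimal rigidity at a critical point of $S$ to local rigidity of $g_A$ modulo $\Diff(X)$.

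First I would use Proposition \ref{dirac_laplacian}: for any $a \in C^\infty(W_A)$ with $\diff_A^* a = 0$, one has $\widehat{D(a)} = -c\,\dirac^*\dirac(\hat a)$ with $c>0$. Pairing with $a$ and integrating by parts gives
\[
\int_X (D(a), a)\, \nu_A \;=\; -c \int_X |\dirac \hat a|^2 \, \nu_A \;\leq\; 0,
\]
so the Hessian is already negative semi-definite on the gauge slice $\ker R_A^*$. Its kernel on this slice consists exactly of those $a$ for which $\hat a$ is a harmonic spinor, $\dirac \hat a = 0$. The negative-definiteness claim thus reduces to showing $\ker \dirac = 0$ on $C^\infty(S_- \otimes S^3_+)$ over an anti-self-dual Einstein manifold with $s \neq 0$.

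For this vanishing I would apply the Lichnerowicz--Weitzenbock formula for the Dirac operator twisted by the Levi--Civita connection on $S^3_+$,
\[
\dirac^*\dirac \;=\; \nabla^*\nabla + \tfrac{s}{4} + \mathcal R,
\]
where $\mathcal R$ encodes Clifford multiplication by the curvature of $S^3_+$. Since $g_A$ is anti-self-dual, $W^+=0$, and so the curvature of $\Lambda^+ \cong S^2_+$, and thus of $S^3_+$, reduces to an explicit scalar multiple of $s$. A direct representation-theoretic computation should then show $\tfrac{s}{4}+\mathcal R$ is a pointwise definite endomorphism whose sign is determined by that of $s$; pairing with $\hat a \in \ker \dirac$ and integrating would force $\hat a = 0$. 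The sub-case $s>0$ is LeBrun's theorem \cite{LeBrun1988A-rigidity-theo} and the sub-case $s<0$ is Horan's \cite{Horan1996A-rigidity-theo}, so their results can be invoked directly.

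To pass from non-degeneracy of the Hessian modulo $\G$ to local rigidity of $g_A$, the formalism of \S\ref{Einstein-gauge} identifies the Einstein metrics of interest with $\G$-equivalence classes of solutions of $\diff_A \Phi_A = 0$, i.e.\ with critical points of $S$ modulo $\G$. Non-degeneracy of $D$ transverse to $\im R_A$, combined with the implicit function theorem on the Banach slice $\ker R_A^*$, would show $[A]$ is isolated in the critical set modulo $\G$; since the $\Hor_A$-component of $\G$ projects onto $\Diff(X)$, this translates into isolation of $g_A$ among Einstein metrics modulo diffeomorphism. The main obstacle is the definiteness computation for $\tfrac{s}{4}+\mathcal R$ on $S_-\otimes S^3_+$: although it follows in principle from the standard Weitzenbock machinery together with $W^+=0$, extracting the correct numerical constants and signs in both scalar-curvature regimes is the delicate technical step, and it is here that one would either redo LeBrun's and Horan's calculations or invoke them directly.
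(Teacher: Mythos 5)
Your proposal follows essentially the same route as the paper: Proposition \ref{dirac_laplacian} reduces negative definiteness modulo gauge to invertibility of the spin Laplacian $\dirac^*\dirac$ on $S_-\otimes S^3_+$, which the paper, like you, simply quotes from LeBrun for $\Lambda>0$ and Horan for $\Lambda<0$. Your additional Weitzenb\"ock sketch is not needed (and for $s<0$ the naive sign of the $\tfrac{s}{4}$ term would in fact go the wrong way, which is exactly why Horan's more delicate argument is cited), but since you explicitly defer to those references the argument stands.
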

\begin{proof}
By Proposition \ref{dirac_laplacian} it suffices to show that when the metric is anti-self-dual, Einstein with non-zero scalar curvature, the spin Laplacian $\dirac^*\dirac$ is invertible on sections of $S_-\otimes S^3_+$. This was proved for positive scalar curvature by LeBrun \cite{LeBrun1988A-rigidity-theo} and negative scalar curvature by Horan \cite{Horan1996A-rigidity-theo}.
\end{proof}

At an arbitrary point $A$ (for which $M_A$ is no longer a multiple of the identity) there is a similar story for the principal part (\ref{hess-princ-part}) of the Hessian. First, it is convenient to twist the tangent vector $a \in \Lambda^1 \otimes E$ by $\sqrt{M_A}$. Define the ``twisted tangent'' $\alpha \in \Lambda^1 \otimes E$ by $a = \sqrt{M_A} \alpha$. Explicitly, if $a = \sum a_i \otimes e_i$, then 
\[
\alpha = \sum (M_A^{-1/2})_{ij}a_j \otimes e_i.
\]
One then finds that the story for the principal part of the Hessian on the space of twisted tangents $\alpha$ repeats the anti-self-dual Einstein case almost exactly. The only additional difference is that in this case instead of the projection $p$ on the trace-free part one has to insert a more complicated operator on $S^2_0\Lambda^+ \cong S_+^4$ into (\ref{hess-asd-sequence})  that depends non-trivially on $M_A$. One then finds that the principal part of the gauge-fixed operator can be written in a form analogous to Proposition \ref{dirac_laplacian}, but with an additional endomorphism $Q \colon  S_+\otimes S_+^3 \to S_+\otimes S_+^3$ (defined in terms of $M_A$) inserted:
\[
C^\infty(S_- \otimes S_+^3) \stackrel{\dirac}{\longrightarrow} 
C^\infty(S_+\otimes S_+^3) \stackrel{Q}{\longrightarrow} 
C^\infty(S_+\otimes S_+^3) \stackrel{Q^*}{\longrightarrow} 
C^\infty(S_+\otimes S_+^3)  \stackrel{\dirac^*}{\rightarrow}  
C^\infty(S_- \otimes S_+^3)
\]
This shows that in general the principal part of the gauge-fixed Hessian is still given by the square of the Dirac operator, but with a non-trivial ``twisting matrix'' $Q^* Q$ inserted. The main complication in the general case is that there are also other terms, apart from the principal part, which are absent in the anti-self-dual Einstein case. 

\subsection{Short time existence of the gradient flow of $S$}\label{gf_section}

We now turn to the gradient flow of $S$, which is given by
\begin{equation}\label{gradient_flow}
\frac{\del A}{\del t}
=
\frac{\Lambda}{6\pi^2}\diff_A^* \Phi_A.
\end{equation}
(The actual constant factor appearing here is unimportant as long as the sign agrees with that of the definite connection.) This is the analogue of the Yang--Mills flow in our theory. 

The fact that the Hessian of $S$ is elliptic modulo gauge implies that the gradient flow (\ref{gradient_flow}) is parabolic modulo gauge and hence exists for short time. This is standard in the study of geometric flows, going by the name of ``de~Turck's trick'' in the study of Ricci flow \cite{DeTurck1983Deforming-metri}; see also \cite{Donaldson1990The-geometry-of} for a similar proof of the short-time existence of Yang--Mills flow. Accordingly we are content to merely sketch the details.

\begin{theorem}\label{short_time_gf}
Let $A_0$ be a definite connection. Then there is a  short-time solution $A(t)$ to the gradient flow (\ref{gradient_flow}) with $A(t) = A_0$.
\end{theorem}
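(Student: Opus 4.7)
The proof is a direct application of de~Turck's trick, exactly as for the Yang--Mills flow \cite{Donaldson1990The-geometry-of} or the Ricci flow \cite{DeTurck1983Deforming-metri}. Gauge invariance of $S$ forces the linearization $D$ of the right-hand side of (\ref{gradient_flow}) to have degenerate principal symbol (Proposition \ref{symbol_Hessian}), so (\ref{gradient_flow}) is only weakly parabolic. The plan is to modify the flow by adding a term lying in the image of the infinitesimal gauge action $R_A$, chosen so that the linearization becomes strictly parabolic, and then to recover a solution of (\ref{gradient_flow}) by pulling back the solution of the modified flow through a time-dependent gauge transformation.

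Since the gauge action (\ref{infinitesimal_action}) decomposes into a first-order piece $-d_A \xi$ for $\xi \in C^\infty(E)$ and a zeroth-order piece $-\iota_u F_A$ for $u \in C^\infty(TX)$, the de~Turck correction must combine a first-order and a zeroth-order term. Fix the initial connection $A_0$ and, for $A$ near $A_0$, set
\[
\xi(A) = d_{A_0}^*(A - A_0), \qquad u(A) = (f_{A_0}^* f_{A_0})^{-1} f_{A_0}^*(A - A_0),
\]
where $(f_{A_0}^* f_{A_0})^{-1}$ exists as a bundle endomorphism of $TX$ because definiteness of $A_0$ forces $f_{A_0}$ to be fibrewise injective. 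Then consider the modified flow
\[
\frac{\del A}{\del t} = \frac{\Lambda}{6\pi^2}\, d_A^* \Phi_A + d_A \xi(A) + \iota_{u(A)} F_A, \qquad A(0) = A_0.
\]
The correction vanishes at $A = A_0$ and its linearization in a direction $a$ is $d_{A_0} d_{A_0}^* a + f_{A_0}(f_{A_0}^* f_{A_0})^{-1} f_{A_0}^* a$, which at the level of principal symbols cancels exactly the kernel of $\sigma(D, \eta)$ described in Proposition \ref{symbol_Hessian}: the first summand is negative definite along the symbol-image of $d_A$, while the second is negative definite along the image of $f_A$. The combined principal symbol is negative definite on $\Lambda^1 \otimes E$, matching the elliptic operator $D'$ from (\ref{D'}), so the modified flow is strictly parabolic.

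Standard quasilinear parabolic short-time existence then produces a smooth solution $\tilde A(t)$ on some interval $[0, T)$. Since definiteness is an open condition (Definition \ref{definite}), $\tilde A(t)$ remains a definite connection after possibly shrinking $T$. To recover a solution of (\ref{gradient_flow}), I solve the ODE
\[
\frac{dg}{dt} = g \cdot \bigl(\xi(\tilde A(t)),\, u(\tilde A(t))\bigr) \in \Lie(\G), \qquad g(0) = \mathrm{id},
\]
in the gauge group $\G$, and set $A(t) = g(t)^* \tilde A(t)$. Gauge equivariance of the map $A \mapsto d_A^* \Phi_A$, together with the relation between the infinitesimal gauge action and the added correction terms, then guarantees that $A(t)$ satisfies (\ref{gradient_flow}) with $A(0) = A_0$.

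The main technical difficulty, compared with the classical Yang--Mills flow, is that $\G$ is strictly larger than the group $\G_0$ of bundle automorphisms covering the identity: it also contains the horizontal lifts $\Hor_A \cong C^\infty(X, TX)$, whose action on connections is purely algebraic by Lemma \ref{linear action of vector}. This is why the de~Turck correction combines a differential term for $\Lie(\G_0)$ with an algebraic term for $\Hor_A$, rather than the single term $d_A d_{A_0}^*(A - A_0)$ familiar from Yang--Mills theory. Once the correct hybrid modification is chosen, the symbol computation is the one already carried out in the proof of Theorem \ref{Hessian_elliptic}, and both the parabolic short-time existence and the construction of the gauge transformation $g(t)$ proceed by standard arguments.
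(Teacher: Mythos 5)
Your overall strategy (a de~Turck-type modification combining a correction for $\Lie(\G_0)$ with one for $\Hor_A$, followed by pulling back through a time-dependent gauge transformation) is the right one, and it is the same in spirit as the paper's. But the parabolicity step, which is the heart of the matter, contains two genuine errors. First, a sign: with $\xi(A)=\diff_{A_0}^*(A-A_0)$ the term you add is $+\diff_A\diff_{A_0}^*(A-A_0)$, whose linearisation at $A_0$ is $+\diff_{A_0}\diff_{A_0}^*$. Its principal symbol is $+p_\eta$, which is \emph{positive} semi-definite (this is convention-independent, since both $\diff_A\diff_A^*$ and the principal part $\delta_A^*L_A\delta_A$ of $D$ are conjugated squares, with $L_A\leq 0$); compare the proof of Theorem \ref{Hessian_elliptic}, where the gauge-fixed operator is $D'=D-\Pi\,\diff_A\diff_A^*\,\Pi$ with symbol $\frac{|\Lambda|}{12\pi^2}\delta_\eta^*L_A\delta_\eta-\Pi p_\eta\Pi$, and Donaldson's modification of the Yang--Mills flow, which likewise adds $-\diff_A\diff_A^*(A-A_0)$. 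With your sign the combined symbol is indefinite, so the modified flow is not parabolic in the $\eta\otimes e$ directions; you need $\xi(A)=-\diff_{A_0}^*(A-A_0)$, i.e.\ you should add $R_A(\xi(A))$ rather than its negative.

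Second, and more seriously, the claim that $\iota_{u(A)}F_A$ with $u(A)=(f_{A_0}^*f_{A_0})^{-1}f_{A_0}^*(A-A_0)$ repairs the degeneracy along $\im f_A$ ``at the level of principal symbols'' cannot be correct: this term is zeroth order in derivatives of $A$, so it contributes nothing to the principal symbol. Even after fixing the sign above, the principal symbol of your modified flow on all of $\Omega^1(X,E)$ is $\sigma(D,\eta)-p_\eta$, which still has nontrivial kernel (for instance $\iota_{\eta^\sharp}F_A$ lies in the kernel of both summands), so the system remains only weakly parabolic and the appeal to standard quasilinear parabolic existence theory is unjustified --- indeed your operator does not ``match $D'$'', since $D'$ acts on sections of the proper sub-bundle $W_A$, not on $\Lambda^1\otimes E$. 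The paper circumvents exactly this point differently: $u_A$ is defined by (\ref{u_A}) not as a gauge-fixing term vanishing at $A_0$ but so that $f_A(u_A)$ cancels the entire $\im f_A$-component of the right-hand side, making the modified flow (\ref{gauge_fixed_gf}) tangent to $W_A$; on $C^\infty(W_A)$ the linearised operator has principal part $D'$, whose symbol is genuinely negative definite, and strict parabolicity holds there. The discarded $\Hor_A$-directions, being pure gauge, are then restored (together with the $\G_0$-directions) by the gauge transformations $g(t)$ generated by $-\diff_{A'}^*(A'-A_0)-u_{A'}$. You should reorganise your argument along these lines: treat the $\im f_A$ degeneracy by projection onto $W_A$ rather than by adding a zeroth-order multiple of the gauge action.
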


\begin{proof}[Sketch of proof]
We will first consider a gauge-equivalent flow:
\begin{equation}\label{gauge_fixed_gf}
\frac{\del A}{\del t} 
= 
\frac{\Lambda}{6\pi^2} \diff_A^*\Phi_A
-
\diff_A \diff_A^* (A-A_0)
-
f_A(u_A).
\end{equation}
Here $u_A \in C^\infty(X,TX)$ is the unique vector field which solves the equation
\begin{equation}\label{u_A}
f_A(u_A) = \left(1- \Pi \right)\left(
\frac{\Lambda}{6\pi^2} \diff_{\hat{A}}^*\Phi_{\hat{A}}
-
\diff_{\hat{A}} \diff_{\hat{A}}^* (A-A_0)
\right)
\end{equation}
(Recall that $\Pi$ is the pointwise orthogonal projection of $\Lambda^1 
\otimes E$ onto $W_A$ and so $1-\Pi$ is projection onto $\im f_A$.) By definition of $u_A$ this flow is tangent to $W_A$. We also ask that $A(0)= A_0$.

At $t=0$, the principal part of the linearisation of this flow is is the operator $D'$ on $C^\infty(W_A)$ of (\ref{D'}). To see this note that the first two terms in (\ref{gauge_fixed_gf}) contribute $D - \diff_A\diff_A^*$ (since at $t=0$ any effect on linearising $\diff_A\diff_A^*$ is then evaluated on $A(0)-A_0 = 0$). Moreover, $f_A$ is first order in $A$ and so the contribution to the principal part coming from $f_A(u_A)$ is determined purely by the change in $u_A$. From the definition (\ref{u_A}) of $u_A$ this cancels exactly $(1-\Pi)(D - \diff_A \diff_A^*)$, leaving $D' = D - \Pi \diff_A\diff_A^*$ as the principal part as claimed.

As we have seen, the symbol of $D'$ is negative definite. It follows from the standard theory of parabolic PDE that (\ref{gauge_fixed_gf}) has a  solution starting, which exists at least for short time. Write $A'$ for the solution to (\ref{gauge_fixed_gf}) with $A'(0)=A_0$. We now consider the path of gauge transforms $g(t) \in \G$ which are generated by $-\diff_{A'}^* (A'-A_0) - u_{A'}$, with $g(0)$ the identity. (Here, we have implicitly used $A'$ to horizontally lift $u_{A'}$ and so consider it as an infinitesimal gauge transformation). By gauge invariance of $\diff_A^*\Phi_A$ and the formula (\ref{infinitesimal_action}) for the infinitesimal action of $-\diff_{A'}^* (A'-A_0) - u_{A'}$, it follows that $A(t) = g(t)^*A'(t)$ solves the original gradient flow (\ref{gradient_flow}) with $A(0) =A_0$.
\end{proof}

\printbibliography

\end{document}